\title[Multisolitons of 3D Hartree]{Existence of multisoliton solutions of the gravitational Hartree equation in three dimensions}
\author{Yutong Wu}
\address{Department of Mathematics, Yale University, New Haven, CT 06511} 
\email{yutong.wu.yw894@yale.edu}
\def \d{\mathrm{d}}
\def \dt{\mathrm{d}t}
\def \re{\mathrm{Re}}
\def \im{\mathrm{Im}}
\def \G{\mathcal{G}}
\def \E{\mathcal{E}}
\def \H{\mathcal{H}}
\def \M{\mathcal{M}}
\def \P{\mathcal{P}}
\def \N{\mathcal{N}}
\def \J{\mathcal{J}}
\begin{document}

\newtheorem{thm}{Theorem}
\newtheorem{cor}[thm]{Corollary}
\newtheorem*{thm*}{Theorem}
\newtheorem{lem}{Lemma}[section]
\newtheorem{prop}[lem]{Proposition}
\newtheorem{defn}[lem]{Definition}
\newtheorem*{rmk}{Remark}

\numberwithin{equation}{section}

\begin{abstract}
We prove the existence of multisoliton solutions of the three-dimensional gravitational Hartree equation whose trajectories follow many body dynamics of hyperbolic, parabolic or hyperbolic-parabolic types. This work generalizes and improves the result of Krieger-Martel-Rapha\"el \cite{KMR2bodyHartree} on two-soliton solutions.
\end{abstract}

\keywords{Hartree equation, multisoliton, $m$-body problem}

\subjclass[2020]{Primary: 35B40. Secondary: 35Q55}

\maketitle

\section{Introduction} \label{sec intro}

\subsection{Background} 
In 1927, soon after the Schr\"odinger equation was proposed, Douglas Hartree derived the Hartree equation, which provided a way to study many body quantum systems. It has then attracted the interest of both physicists and mathematicians.

In this paper, we consider the gravitational Hartree equation in 3D 
\begin{equation} \label{eq hartree}
    iu_t+ \Delta u - \phi_{|u|^2} u=0,
\end{equation}
where $u: \mathbb{R} \times \mathbb{R}^3 \to \mathbb{C}$ and 
\begin{equation}
    \phi_{|u|^2}= \Delta ^{-1} (|u|^2) = -\frac{1}{4\pi |x|} * |u|^2.
\end{equation}
We begin with some properties of the equation.

The equation possesses a large family of symmetries. Namely, if $u$ solves \eqref{eq hartree}, then for any $(t_0, \alpha_0, \beta_0, \lambda_0, \gamma_0) \in \mathbb{R} \times \mathbb{R}^3 \times \mathbb{R}^3 \times \mathbb{R}_+ \times (\mathbb{R}/2\pi \mathbb{Z})$,
\begin{equation} \label{eq symmetry}
    v(t,x)= \lambda_0^2 u(\lambda_0^2 t+t_0, \lambda_0 x- \alpha_0- \beta_0 t) e^{i (\frac{1}{2} \beta_0 \cdot x- \frac{1}{4}|\beta_0|^2 t+ \gamma_0 )}
\end{equation}
also solves \eqref{eq hartree}. In view of Noether's theorem, we expect the equation to have some conservation laws. The following quantities are conserved by the equation:
\begin{align*}
    &\text{Mass: } && \M(u)= \int |u(t,x)|^2 \d x, \\
    &\text{Momentum: } && \P(u)= \int \im \big( \nabla u(t,x) \overline{u(t,x)} \big) \d x, \\
    &\text{Hamiltonian: } && \H(u)= \frac{1}{2} \int |\nabla u(t,x)|^2 \d x- \frac{1}{4} \int \big| \nabla \phi_{|u|^2} (t,x) \big|^2 \d x,
\end{align*}
In other words, if $u$ solves \eqref{eq hartree}, then these quantities are independent of $t$.

The equation \eqref{eq hartree} is mass ($L^2$)-subcritical. By Theorem 6.1.1 in \cite{cazenave2003semilinear}, we know the Cauchy problem of \eqref{eq hartree} is globally wellposed in $H^1$. To be more precise, for any $u_0 \in H^1(\mathbb{R}^3)$, there exists a unique $u \in C \big( \mathbb{R}; H^1(\mathbb{R}^3) \big)$ satisfying \eqref{eq hartree} and $u(0,x)=u_0(x)$. Moreover, this $u$ depends on $u_0$ continuously.

The linear Schr\"odinger equation $iu_t+ \Delta u=0$ is called dispersive because of the dispersive estimate $\Vert e^{it \Delta} f \Vert_{L^\infty} \lesssim t^{-\frac{3}{2}} \Vert f \Vert_{L^1}$ for any $f \in L^1 \cap L^2 (\mathbb{R}^3),\ t>0$. However, due to the (focusing) nonlinearity, there exist non-dispersive solutions to \eqref{eq hartree} called solitary waves. 

A solitary wave is a solution to \eqref{eq hartree} of the form $u(t,x)= e^{it} W(x)$. We deduce that $W$ satisfies $\Delta W- \phi_{|W|^2} W= W$. From \cite{Liebgroundstate} we know there exists a unique radial and nonnegative solution $Q$ of
\begin{equation} \label{eq ground state}
    \Delta Q- \phi_{Q^2} Q= Q,
\end{equation}
called the ground state. It is proved in \cite{Lionsminizingdescriptionofgroundstate} that this $Q$ can also be characterized as the radial minimizer of the Hamiltonian subject to a given $L^2$ norm. More precisely, it minimizes
\begin{equation}
    \H(u)= \frac{1}{2} \int |\nabla u|^2- \frac{1}{4} \int |\nabla \phi_{|u|^2} |^2, 
\end{equation}
among all $u \in H^1$ with the same $L^2$ norm as $Q$. Another property of $Q$ is the exponential decay:
\begin{equation} \label{eq decay of Q}
    Q(x) \le C e^{-c|x|}, \quad \forall x \in \mathbb{R}^3.
\end{equation}Using \eqref{eq symmetry} we can construct a family of ground state solitary waves. 

The main object of interest in this paper is the multisolitary wave, or multisoliton, which can be roughly understood as the sum of several solitary waves. 

Multisolitary waves are expected to be important components of generic solutions as claimed in the soliton resolution conjecture. It plays an crucial role when we try to understand the long time behavior of solutions. We cite \cite{Taosoliton}, \cite{solitonresolutionNLS}, \cite{TaoNLS}, \cite{JendreyLawrieNLW}, \cite{DKMwaveodd} and \cite{solitonresolutionmKdV} as references for some partial results on soliton resolution for some nonlinear dispersive equations. Another aspect is to study the existence and stability of solutions that approach given multisolitons. In this direction, \cite{KsolitarywavesofNLS}, \cite{MartelgKdV} and \cite{NsolitonKleinGordon} proved the existence of solutions that asymptotically approach multisoliton with constant and distinct speeds for the nonlinear Schr\"odinger equation (NLS), the generalized Korteweg-de Vries equation (gKdV) and the nonlinear Klein Gordon equation (NLKG), respectively. Moreover, \cite{StabilityNLS1d} and \cite{MMTstabilityforgKdV} studied the stability of such multisoliton solutions. The general expectation is that multisolitons are orbitally stable if the speeds are separated.

We point out that the above literature only considered equations with local nonlinearity. For such equations, the sum of two ground state solitary waves moving away at a constant speed solves the equation up to a term that decays exponentially in time. This reflects that the nonlinearity does not affect the asymptotic behavior dramatically. On the other hand, the long time behavior of the Hartree equation is difficult to study because of the long range effect of the nonlinearity. More precisely, we have
\begin{equation}
    \phi_{Q^2}(x) \sim \frac{1}{|x|} \quad \text{as } x \to \infty,
\end{equation}
where $\sim$ means comparable up to constants, so the error term at most admits a polynomial decay. A quantitative estimate of such errors is given in Lemma \ref{lem localization}. This is the main difficulty we have to deal with.

\subsection{The m-body problem}

As a starting point of the study of long time dynamics, Krieger, Martel and Rapha\"el \cite{KMR2bodyHartree} studied the existence of two-soliton solutions of \eqref{eq hartree}. This pioneer paper revealed that one should expect a gravitational two-body interaction within the two solitons. This interaction cancels the long range effect properly. In this subsection, we review some important facts about the $m$-body problem.

Let $m \ge 2$. The $\mathbf{m}$\textbf{-body problem} is an ODE system
\begin{equation} \label{eq m-body problem} 
    \dot{\alpha}_j(t)= 2\beta_j(t), \quad \dot{\beta}_j(t)= - \sum_{k \neq j} \frac{\Vert Q \Vert_{L^2}^2}{4\pi \lambda_k} \cdot \frac{\alpha_j(t)- \alpha_k(t)} {\left| \alpha_j(t)- \alpha_k(t) \right|^3}, \qquad \forall 1 \le j \le m,
\end{equation}
where $\alpha_j, \beta_j \in C^1 \big( \mathbb{R}, \mathbb{R}^3 \big)$ and $\lambda_j \in \mathbb{R}_+$ for $1 \le j \le m$. 

The case where $m=2$ is the famous two-body problem. It is known that the solution to the two-body problem is hyperbolic, parabolic or elliptic, corresponding to the asymptotic behavior being
\begin{equation}
    |\alpha_1(t)- \alpha_2(t)| \sim t^q \text{ as } t \to +\infty
\end{equation}
with $q=1$, $q=\frac{2}{3}$ or $q=0$, respectively. In this paper, we write $f \sim g$ if there exist $0<c<C$ such that $cf \le g \le Cf$.

The $m$-body problem for $m \ge 3$, as opposed to the two-body problem, is much more complicated. Even for three-body, chaotic dynamics may occur \cite{Poincare3body}. For our purpose, we will focus on \textbf{expansive} solutions, which means $|\alpha_j(t)- \alpha_k(t)| \to +\infty$ as $t \to +\infty$ for any $j \neq k$. On the PDE side, this requires the centers of solitons to move far away from each other, which is essential for us to exploit the exponential decay of the ground state \eqref{eq decay of Q}.

By translation we may set the center of the system at the origin. Consider the following sets of configurations
\begin{gather*}
    \mathcal{X}= \Big\{ (x_1, \cdots, x_m) \in \mathbb{R}^{3m} \ \Big| \ \sum_{j=1}^m \lambda_j^{-1} x_j=0 \Big\}, \\
    \mathcal{Y}= \big\{ (x_1, \cdots, x_m) \in \mathcal{X} \ \big| \ x_j \neq x_k, \ \forall j \neq k \big\} \quad \text{and} \quad \Delta= \mathcal{X} \setminus  \mathcal{Y}.
\end{gather*}

The final evolution of expansive solutions is described as follows.
\begin{thm*} [Marchal-Saari \cite{StructureOfExpansive}]
For an expansive solution of \eqref{eq m-body problem} centered at the origin, there exists $(a_1, \cdots, a_m) \in \mathcal{X}$ such that 
\begin{equation} \label{eq expansive solution}
    \alpha_j= a_jt+ O(t^\frac{2}{3}),\ 1 \le j \le m \quad \text{as } t \to +\infty.
\end{equation}
Moreover, if $a_j=a_k$ for some $j \neq k$,  then $|\alpha_j- \alpha_k| \sim t^\frac{2}{3}$ as $t \to +\infty$.
\end{thm*}

We classify solutions of the form \eqref{eq expansive solution} into the following three types in the spirit of Chazy \cite{ChazyClassification} according to the growth of distances between different bodies.
\begin{itemize}
    \item \textbf{hyperbolic}: $(a_1, \cdots, a_m) \in \mathcal{Y}$, or equivalently, for all $j \neq k$, 
    \begin{equation}
        |\alpha_j(t)- \alpha_k(t)| \sim t \quad \text{as } t \to +\infty.
    \end{equation}
    \item \textbf{parabolic}: $(a_1, \cdots, a_m)=0$, or equivalently, for all $j \neq k$, 
    \begin{equation}
        |\alpha_j(t)- \alpha_k(t)| \sim t^\frac{2}{3} \quad \text{as } t \to +\infty.
    \end{equation}
    \item \textbf{hyperbolic-parabolic}: $(a_1, \cdots, a_m) \in \Delta \setminus \{0\}$. In this case, both of the above pairwise asymptotics occur, and they are the only possibilities.
\end{itemize}

Notice that this agrees with the previous notion for two-body dynamics, where the hyperbolic-parabolic dynamic does not appear evidently.

The existence of such three types of solutions has been studied in the recent years. The newest results in this direction are obtained through a variational approach originating from Maderna-Venturelli  \cite{nbodyhyperbolic}. See also \cite{Nbodyparabolic} for a similar strategy. We state the results as follows.

\begin{thm*} [Maderna-Venturelli \cite{nbodyhyperbolic}; Polimeni-Terracini \cite{ExistenceofNbodyproblem}] Let $\lambda_j \in \mathbb{R}_+$ for $1 \le j \le m$.

\begin{enumerate} [label=(\arabic*)]
    \item There exists a hyperbolic solution to \eqref{eq m-body problem} of the form
    \begin{equation} \label{eq hyperbolic solution}
        \alpha_j(t)= a_j t+ O(\log t) \quad \text{as } t \to +\infty
    \end{equation}
    for any $(a_1, \cdots, a_m) \in \mathcal{Y}$ and initial configuration in $\mathcal{X}$.
    \item There exists a parabolic solution to \eqref{eq m-body problem} of the form
    \begin{equation} \label{eq parabolic solution}
        \alpha_j(t)= cb_j t^{\frac{2}{3}}+ o(t^{\frac{1}{3}+}) \quad \text{as } t \to +\infty
    \end{equation}
    for any minimal $(b_1, \cdots, b_m) \in \mathcal{Y}$ and initial configuration in $\mathcal{X}$, where $c>0$ is determined by $b_1,\cdots,b_m$.
    \item There exists a hyperbolic-parabolic solution to \eqref{eq m-body problem} of the form
    \begin{equation} \label{eq hyperbolic-parabolic solution}
        \alpha_j(t)=a_j t+ c_jb_jt^{\frac{2}{3}}+ o(t^{\frac{1}{3}+}) \quad \text{as } t \to +\infty
    \end{equation}
    for any $(a_1, \cdots, a_m) \in \Delta \setminus \{0\}$, minimal $(b_1, \cdots, b_m) \in \mathcal{Y}$ and initial configuration in $\mathcal{X}$, where $c_j>0$ is determined by $a_1,\cdots,a_m, b_1,\cdots,b_m$ and $c_j=c_k$ whenever $a_j=a_k$.
\end{enumerate}
\end{thm*}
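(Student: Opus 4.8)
The plan is to reduce \eqref{eq m-body problem} to the classical Newtonian $m$-body problem: eliminating $\beta_j=\tfrac12\dot\alpha_j$ turns it into the second-order system $\ddot\alpha_j=-\sum_{k\neq j}\mu_k(\alpha_j-\alpha_k)/|\alpha_j-\alpha_k|^3$ with masses $\mu_j=\Vert Q\Vert_{L^2}^2/(2\pi\lambda_j)$, so the three assertions are, respectively, Theorem~1.1 of \cite{nbodyhyperbolic} and Theorems~1.6--1.8 of \cite{ExistenceofNbodyproblem}; what follows only describes the structure of those arguments, which is all we shall need. One first notes that $\sum_j\lambda_j^{-1}\beta_j$ is conserved, by antisymmetry of the interaction, so the $\lambda^{-1}$--weighted center of mass travels on a straight line; since the prescribed velocities obey $\sum_j\lambda_j^{-1}a_j=0$ (i.e.\ $(a_j)\in\mathcal{X}$) one may freeze it at the origin, which is what makes the requirement ``initial configuration in $\mathcal{X}$'' consistent.

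For the hyperbolic case I would write $\alpha_j(t)=a_jt+\gamma_j(t)$ and observe that distinctness of the $a_j$ forces $|\alpha_j-\alpha_k|\gtrsim t$ as long as $\gamma=o(t)$, so the interaction is $O(t^{-2})$ and $\ddot\gamma_j=O(t^{-2})$; integrating twice from $+\infty$ sets up a contraction on $[T,+\infty)$, $T\gg1$, in a weighted sup norm and yields a solution with $\gamma_j=O(\log t)=o(t)$. This only reaches configurations close to a free motion, so to pass through an \emph{arbitrary} configuration of $\mathcal{X}$ one uses the weak-KAM/viscosity-solution method of \cite{nbodyhyperbolic}: construct a global viscosity solution of the stationary Hamilton--Jacobi equation at energy $\tfrac12\sum_j\mu_j|a_j|^2$ whose calibrated curves are exactly the hyperbolic motions with asymptotic velocity $(a_j)$, then take the calibrated curve through the given configuration, which is automatically collision-free since action-minimizing curves of the Newtonian problem avoid collisions (Marchal).

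For the parabolic case the model is the self-similar solution $\alpha_j^\infty(t)=c\,b_jt^{2/3}$, which solves \eqref{eq m-body problem} exactly when $(b_j)$ is a critical point, on the inertia sphere $\{\sum_j\lambda_j^{-1}|x_j|^2=1\}$, of the configurational potential $\tfrac{\Vert Q\Vert_{L^2}^2}{4\pi}\sum_{j<k}(\lambda_j\lambda_k|x_j-x_k|)^{-1}$; the hypothesis that $(b_j)$ be \emph{minimal} says precisely that it is the global minimizer there, with $c>0$ the Lagrange multiplier. I would then minimize the suitably renormalized action
\[
\mathcal{A}(\alpha)=\int_0^{+\infty}\Big(\tfrac14\sum_j\lambda_j^{-1}|\dot\alpha_j|^2+\tfrac{\Vert Q\Vert_{L^2}^2}{4\pi}\sum_{j<k}\tfrac{1}{\lambda_j\lambda_k|\alpha_j-\alpha_k|}\Big)\,\d t
\]
over curves issuing from the prescribed configuration in $\mathcal{X}$ and asymptotic to $\alpha^\infty$; minimality of $(b_j)$ is what forces the minimizer to be collision-free (via Marchal together with a blow-up analysis of any putative collision), and linearizing \eqref{eq m-body problem} about the homogeneous degree-$2/3$ profile produces the stated rate $cb_jt^{2/3}+o(t^{1/3+})$, the exponent $\tfrac13$ being the natural scale $t^{-1/3}$ of the profile damped by the spectral gap of the linearized operator.

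For the hyperbolic--parabolic case, $(a_j)\in\Delta$ means the relation $j\sim k\iff a_j=a_k$ breaks the bodies into clusters; inter-cluster distances grow like $t$, whereas in the frame drifting with its own cluster each cluster carries zero internal energy and therefore expands parabolically like $t^{2/3}$, so I would glue the hyperbolic fixed point governing the inter-cluster separation to a cluster-by-cluster version of the variational parabolic construction, the coupling being harmless because the cross forces are $O(t^{-2})\ll t^{-2/3}$; the constant $c_j$ falls out of the matching and is constant along clusters, hence $c_j=c_k$ when $a_j=a_k$. The hard part throughout — and the reason these are genuine theorems rather than soft ODE facts — is excluding collisions over the whole half-line while preserving the prescribed asymptotics: in the parabolic and mixed cases the action minimizer could a priori be a collision orbit, and ruling this out needs the minimality of $(b_j)$ together with Marchal/blow-up arguments, and in the mixed case one must additionally separate the $t$-- and $t^{2/3}$--time scales through the long-range $1/|x|$ tail, which is precisely the long-range effect flagged after \eqref{eq hartree} and quantified in Lemma~\ref{lem localization}. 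Since all of this is carried out in \cite{nbodyhyperbolic,ExistenceofNbodyproblem}, in this paper we simply take the theorem as given.
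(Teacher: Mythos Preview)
Your proposal is correct: the paper does not prove this theorem at all but merely cites it from \cite{nbodyhyperbolic} and \cite{ExistenceofNbodyproblem}, exactly as you conclude in your final sentence. Your sketch of the arguments behind those references is additional exposition beyond anything the paper provides, but the bottom line --- that the result is taken as given --- matches the paper precisely.
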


Regarding the term minimal, see Remark 2 after Theorem \ref{thm existence}. 

\subsection{The main result}

The result in \cite{KMR2bodyHartree} is that for the two-body problem, hyperbolic and parabolic solutions to \eqref{eq m-body problem} produce two-soliton solutions of \eqref{eq hartree}. Based on their method, we generalize their result to $m$-soliton solutions. Our result asserts the existence of multisoliton solutions to \eqref{eq hartree} reproducing the above three expansive dynamics. An assumption on the masses is needed for the last two dynamics.

\begin{thm} \label{thm existence}
Let $(\alpha_1^\infty, \cdots, \alpha_m^\infty, \beta_1^\infty, \cdots, \beta_m^\infty, \lambda_1^\infty, \cdots, \lambda_m^\infty)$ be a solution to \eqref{eq m-body problem} of one of the three types \eqref{eq hyperbolic solution}, \eqref{eq parabolic solution} or \eqref{eq hyperbolic-parabolic solution}. Suppose $\lambda_j^\infty= \lambda_k^\infty$ whenever $|\alpha_j^\infty(t)- \alpha_k^\infty(t)| \sim t^{\frac{2}{3}}$ as $t \to +\infty$.

Then there exists a solution $u$ to \eqref{eq hartree} and $\gamma_1^\infty(t), \cdots \gamma_m^\infty(t)$ that are $C^1$ in $t$ such that
\begin{equation}
    \lim_{t \to +\infty} \bigg\Vert u(t,x)- \sum_{j=1}^m \frac{1}{(\lambda_j^\infty)^2} Q \Big( \frac{x - \alpha_j^\infty(t)}{\lambda_j^\infty} \Big) e^{-i\gamma_j^\infty(t)+ i\beta_j^\infty(t) \cdot x} \bigg\Vert_{H^1} =0.
\end{equation}
\end{thm}

\begin{rmk} \ 

1. In the parabolic case, Theorem \ref{thm existence} improves the result in \cite{KMR2bodyHartree} as we take $\alpha_j$ (in their statement) to be identical to $\alpha_j^\infty$, which trivially answers their Comment 2. 

2. The assumption that $(b_1, \cdots, b_m)$ is minimal in \eqref{eq parabolic solution} and \eqref{eq hyperbolic-parabolic solution} is not directly used when we deal with the parabolic case and the hyperbolic-parabolic case. The precise definition of ``minimal" can be found in \cite{ExistenceofNbodyproblem}, and this assumption is needed there to guarantee the existence of solutions of the $m$-body problem. Note also that this refers to different properties in \eqref{eq parabolic solution} and \eqref{eq hyperbolic-parabolic solution}.

3. Our result seems a satisfactory counterpart of \cite{KsolitarywavesofNLS} and \cite{MartelgKdV}, which dealt with the existence of multisolitary waves of (NLS) and (gKdV), respectively. Moreover, by \cite{nonscatterhartree}, a radiation term is not expected for multisoliton solutions of the Hartree equation. Thus in the spirit of the soliton resolution conjecture, we have constructed a relatively complete class of solutions.

4. Some future problems related to this work include the following: Are the multisoliton solutions constructed above stable? Do multisoliton solutions with elliptic type interactions exist? Can the results be extended to other dimensions?
\end{rmk}

Using the expansion of hyperbolic motions given by Chazy \cite{ChazyClassification}, for any $(x_1, \cdots, x_m) \in \mathcal{X}$ and $(a_1, \cdots, a_m) \in \mathcal{Y}$, there exists a solution to \eqref{eq m-body problem} of the form $\alpha_j(t)= x_j+ a_j t+ c_j \log t+ o(1)$ for some $(c_1, \cdots, c_m) \in \mathcal{X}$. Then we deduce the following from Theorem \ref{thm existence}.

\begin{cor} \label{cor hyperbolic}
Given $\lambda_1, \cdots, \lambda_m \in \mathbb{R}_+$, $x_1, \cdots, x_m \in \mathbb{R}^3$ and distinct $a_1, \cdots, a_m \in \mathbb{R}^3$, there exist $c_1, \cdots, c_m \in \mathbb{R}^3$, a solution $u$ to \eqref{eq hartree}, and $\gamma_1(t), \cdots \gamma_m(t)$ that are $C^1$ in $t$ such that
\begin{equation}
    \lim_{t \to +\infty} \bigg\Vert u(t,x)- \sum_{j=1}^m \frac{1}{\lambda_j^2} Q \Big( \frac{x -x_j- a_jt- c_j \log t}{\lambda_j} \Big) e^{-i\gamma_j(t)+ i\frac{a_j}{2} \cdot x} \bigg\Vert_{H^1} =0.
\end{equation}
\end{cor}

\begin{rmk}
Comparing Corollary \ref{cor hyperbolic} with the results in \cite{KsolitarywavesofNLS} and \cite{MartelgKdV}, we see that the $c_j \log t$ term is the necessary corrector accounting for the long range effect of the Hartree nonlinearity. 
\end{rmk}

We end the introduction section with some comments on the proof of the theorem and the organization of the paper.

Due to the long range effect mentioned before, we need to first construct approximate solutions. The difficulty compared to \cite{KMR2bodyHartree} lies mainly in the parabolic and hyperbolic-parabolic cases. We need to study an approximate system of the $m$-body problem, which is essentially harder than the two-body case. For this purpose, we have to perform delicate computation of the constants involved. We made use of a cancellation of errors displayed in the proof of Proposition \ref{prop parabolic and hyperbolic-parabolic trajectory}. This is a new observation.

The article is organized as follows. In Section \ref{sec approximate}, we construct approximate multisolitary solutions of \eqref{eq hartree} up to the $N$-th order for any $N \ge 1$ to overcome the long range effect. We then focus on the hyperbolic case. We reduce the problem to a uniform estimate and furthermore a modulation estimate in Section \ref{sec reduction}. Then the modulation estimate is proved in Section \ref{sec estimate}, finishing the proof of the hyperbolic case. The other two cases of Theorem \ref{thm existence} are addressed in section \ref{sec parabolic and hyperbolic-parabolic}.

\section{Approximate solutions} \label{sec approximate}

First we introduce some notations. For $\alpha_j$, $\beta_j$, $\lambda_j$ and $\gamma_j$ (may depending on time), we denote
\begin{equation} \label{eq notation, gathered positions} \begin{gathered}
    \alpha= (\alpha_1, \cdots, \alpha_m), \quad \beta= (\beta_1, \cdots, \beta_m), \\
    \lambda= (\lambda_1, \cdots, \lambda_m), \quad \gamma= (\gamma_1, \cdots, \gamma_m), \\
    P= (\alpha, \beta, \lambda), \quad g=(P,\gamma), \quad g_j= (\alpha_j, \beta_j, \lambda_j, \gamma_j), \\
    \alpha_{jk}= \alpha_j- \alpha_k, \quad \beta_{jk}= \beta_j- \beta_k, \quad a= \min_{j \neq k} |\alpha_{jk}|.
\end{gathered} \end{equation}
We use similar notation when there are superscripts.

For $u: \mathbb{R} \times \mathbb{R}^3 \to \mathbb{C}$, we define $g_j u: \mathbb{R} \times \mathbb{R}^3 \to \mathbb{C}$ by
\begin{equation} \label{eq notation, action} \begin{gathered}
    g_j u(t,x)= \frac{1}{\lambda_j^2} u \Big( t, \frac{x-\alpha_j}{\lambda_j} \Big) e^{-i\gamma_j+ i\beta_j \cdot x}.
\end{gathered} \end{equation}
In particular, $g_j Q$ represents a soliton and $\sum \limits_{j=1}^m g_j Q$ is a multisoliton.

Assume $u=g_j v$ and the components of $g_j$ may depend on $t$. More precisely, we assume
\begin{equation}
    u(t,x)= \frac{1}{\lambda_j^2(t)} v \left( t, \frac{x-\alpha_j(t)}{\lambda_j(t)} \right) e^{-i \gamma_j(t)+ i \beta_j(t) \cdot x}.
\end{equation}
Then we have
\begin{equation} \label{eq basic calculation} \begin{aligned}
    &u_t= \frac{1}{\lambda_j^4} \Big( \lambda_j^2 v_t- \lambda_j \dot{\alpha}_j \cdot \nabla v- \dot{\lambda}_j (x-\alpha_j) \cdot \nabla v- 2\lambda_j \dot{\lambda}_j v \\
    &\qquad \qquad \qquad \qquad \qquad \qquad \qquad -i\lambda_j^2 \dot{\gamma}_j v+ i\lambda_j^2 \dot{\beta}_j \cdot x v \Big) e^{-i\gamma_j+ i\beta_j \cdot x}, \\
    &\nabla u= \frac{1}{\lambda_j^3} \left( \nabla v+ i\lambda_j \beta_j v \right) e^{-i\gamma_j+ i\beta_j \cdot x}, \\
    &\Delta u= \frac{1}{\lambda_j^4} \left( \Delta v+ 2i\lambda_j \beta_j \cdot \nabla v- \lambda_j^2 |\beta_j|^2 v \right) e^{-i\gamma_j+ i\beta_j \cdot x}, \\
    &\phi_{|u|^2}(x)= \frac{1}{\lambda_j^2} \phi_{|v|^2} \Big( \frac{x-\alpha_j}{\lambda_j} \Big).
\end{aligned} \end{equation}

Therefore, if we let
\begin{equation}
    u(t,x):= \sum_{j=1}^m u_j(t,x) := \sum_{j=1}^m g_j v_j(t,x),
\end{equation}
and set $y_j= \frac{x-\alpha_j(t)}{\lambda_j(t)}$, $\Lambda v_j= 2v_j+ y_j \cdot \nabla v_j$, then
\begin{equation}
    iu_t+ \Delta u -\phi_{|u^2|}u = \sum_{j=1}^m \frac{1}{\lambda_j^4} E_j(t,y_j) e^{-i\gamma_j+ i\beta_j \cdot x}- \sum_{k \neq j} \phi_{\mathrm{Re} (u_k \overline{u_j})} u,
\end{equation}
where
\begin{equation} \begin{aligned}
    E_j(t,y_j)= &\quad i\lambda_j^2 \partial_t v_j+ \Delta v_j- v_j- i\lambda_j \dot{\lambda}_j \Lambda v_j- \lambda_j^3 \dot{\beta}_j \cdot y_j v_j \\
    &- i\lambda_j \left( \dot{\alpha}_j- 2\beta_j \right) \nabla v_j+ \lambda_j^2 \left( \dot{\gamma}_j+ \frac{1}{\lambda_j^2}- |\beta_j|^2- \dot{\beta}_j \cdot \alpha_j \right) v_j \\
    &- \left[ \phi_{|v_j|^2}+ \sum_{k \neq j} \left( \frac{\lambda_j}{\lambda_k} \right)^2 \phi_{|v_k|^2} \left( t, \frac{\lambda_j}{\lambda_k} y_j+ \frac{\alpha_{jk}}{\lambda_k} \right) \right] v_j.
\end{aligned} \end{equation}
To be clear, the space variable of the right hand side is $y_j$ unless explicitly written out.

\subsection{Definition of approximate solutions}

We need to approximate the last term of $E_j$. Since
\begin{equation}
    \left( \frac{\lambda_j}{\lambda_k} \right)^2 \phi_{|v_k|^2} \left( t, \frac{\lambda_j}{\lambda_k} y_j+ \frac{\alpha_{jk}}{\lambda_k} \right) = -\frac{\lambda_j^2}{4\pi \lambda_k} \int_{\mathbb{R}^3} \frac{|v_k(t,\xi)|^2}{|\lambda_j y_j+ \alpha_{jk}- \lambda_k \xi|} \d \xi,
\end{equation}
we consider the Taylor expansion
\begin{equation}
    \frac{1}{|\alpha-\zeta|}= \sum_{n=1}^N F_n(\alpha,\zeta)+ O \left( \frac{|\zeta|^N}{|\alpha|^{N+1}} \right) \quad \text{as } \zeta \to 0,
\end{equation}
where $F_n(\alpha,\zeta)$ is homogeneous of degree $-n$ in $\alpha$ and of degree $n-1$ in $\zeta$. We define the approximation to be
\begin{equation} \begin{aligned}
    \phi_{|v_k|^2}^{(N)}(t,y_j) &:= \sum_{n=1}^N \psi_{|v_k|^2}^{(n)}(t,y_j) \\
    &:= \sum_{n=1}^N -\frac{\lambda_j^2}{4\pi \lambda_k} \int_{\mathbb{R}^3} |v_k(t,\xi)|^2 F_n(\alpha_{jk}, \lambda_k \xi- \lambda_j y_j) \d \xi.
\end{aligned} \end{equation}
Explicit formulae for the first few terms are as follows:
\begin{gather*}
    \psi_{|v_k|^2}^{(1)}(t,y_j)= -\frac{\lambda_j^2}{4\pi \lambda_k |\alpha_{jk}|} \int |v_k(t,\xi)|^2 \d \xi, \\
    \psi_{|v_k|^2}^{(2)}(t,y_j)= -\frac{\lambda_j^2}{4\pi \lambda_k |\alpha_{jk}|^3} \int \Big( \lambda_k (\alpha_{jk} \cdot \xi) |v_k(t,\xi)|^2- \lambda_j (y_j \cdot \alpha_{jk}) |v_k(t, \xi)|^2 \Big) \d \xi.
\end{gather*}
We will need to take $v_k=Q$. In this case we denote $\psi_{|v_k|^2}^{(n)}$ by $\psi_{Q^2,k}^{(n)}$. Namely, 
\begin{equation}
    \psi_{Q^2,k}^{(n)}(y_j)= -\frac{\lambda_j^2}{4\pi \lambda_k} \int_{\mathbb{R}^3} Q^2(\xi) F_n(\alpha_{jk}, \lambda_k \xi- \lambda_j y_j) \d \xi.
\end{equation}

We shall let $v_j$ vary in $N$, and we also assume $v_j$ depends on time only through the parameters $t \mapsto P(t)$, which means $v_j(t,y_j)= V_j^{(N)}(P(t),y_j)$ for some $V_j^{(N)}$. 

Define
\begin{equation} \label{eq approximate solution} \begin{aligned}
    R_g^{(N)}(t,x):= \sum_{j=1}^m R_{j,g}^{(N)}(t,x):= \sum_{j=1}^m g_j V_j^{(N)}(P(t),x). 
\end{aligned} \end{equation}
Let us omit the subscript $g$ of $R^{(N)}$ for now. We have
\begin{equation} \label{eq equation of approximate solution} \begin{aligned}
    &i\partial_t R^{(N)}+ \Delta R^{(N)}- \phi_{|R^{(N)}|^2} R^{(N)} \\
    = &\quad \sum_{j=1}^m \frac{1}{\lambda_j^4} E_j^{(N)}(t,y_j) e^{-i\gamma_j+ i\beta_j \cdot x}- \sum_{k \neq j} \phi_{\mathrm{Re} (R_k^{(N)} \overline{R_j^{(N)}})} R^{(N)} \\
    &+ \sum_{j=1}^m \frac{1}{\lambda_j^4} \sum_{k \neq j} \left[ \phi_{\left| V_k^{(N)} \right|^2}^{(N)}- \left( \frac{\lambda_j}{\lambda_k} \right)^2 \phi_{|V_k^{(N)}|^2} \left( P(t), \frac{\lambda_j}{\lambda_k} y_j+ \frac{\alpha_{jk}}{\lambda_k} \right) \right] V_j^{(N)} e^{-i\gamma_j+ i\beta_j \cdot x},
\end{aligned} \end{equation} 
where
\begin{equation} \begin{aligned}
    E_j^{(N)}= &\quad i\lambda_j^2 \partial_t V_j^{(N)}+ \Delta V_j^{(N)}- V_j^{(N)}- i\lambda_j \dot{\lambda}_j \Lambda V_j^{(N)}- \lambda_j^3 \dot{\beta}_j \cdot y_j V_j^{(N)} \\
    &- i\lambda_j \left( \dot{\alpha}_j- 2\beta_j \right) \nabla V_j^{(N)}+ \lambda_j^2 \left( \dot{\gamma}_j+ \frac{1}{\lambda_j^2}- |\beta_j|^2- \dot{\beta}_j \cdot \alpha_j \right) V_j^{(N)} \\
    &- \left( \phi_{\left| V_j^{(N)} \right|^2}+ \sum_{k \neq j} \phi_{\left| V_k^{(N)} \right|^2}^{(N)} \right) V_j^{(N)}.
\end{aligned} \end{equation}

For functions $M_j^{(N)}(P)$ and $B_j^{(N)}(P)$ of the parameters, we can decompose
\begin{equation}
    E_j^{(N)}= \tilde{E}_j^{(N)}+ S_j^{(N)},
\end{equation}
where
\begin{equation} \label{eq definition of E_j tilde} \begin{aligned} 
    \tilde{E}_j^{(N)}(t,y_j)= &\quad \Delta V_j^{(N)}- V_j^{(N)}- \phi_{\left| V_j^{(N)} \right|^2} V_j^{(N)}- \sum_{k \neq j} \phi_{\left| V_k^{(N)} \right|^2}^{(N)} V_j^{(N)} \\
    &- i\lambda_j M_j^{(N)} \Lambda V_j^{(N)}- \lambda_j^3 B_j^{(N)} \cdot y_j V_j^{(N)} \\
    &+ i\lambda_j^2 \sum_{k=1}^m \bigg( \frac{\partial V_j^{(N)}}{\partial \alpha_k} \cdot 2\beta_k+ \frac{\partial V_j^{(N)}}{\partial \beta_k} \cdot B_k^{(N)}+ \frac{\partial V_j^{(N)}}{\partial \lambda_k} M_k^{(N)} \bigg)  
\end{aligned} \end{equation} 
and
\begin{equation} \label{eq definition of S_j^N} \begin{aligned}
    S_j^{(N)}(t,x)= &- i\lambda_j \left( \dot{\alpha}_j- 2\beta_j \right) \nabla V_j^{(N)}- \lambda_j^3 \left( \dot{\beta}_j- B_j^{(N)} \right) \cdot y_j V_j^{(N)} \\
    &- i\lambda_j \left( \dot{\lambda}_j - M_j^{(N)} \right) \Lambda V_j^{(N)}+ \lambda_j^2 \bigg( \dot{\gamma}_j+ \frac{1}{\lambda_j^2}- |\beta_j|^2- \dot{\beta}_j \cdot \alpha_j \bigg) V_j^{(N)} \\
    &+ i\lambda_j^2 \sum_{k=1}^m \Bigg[ \frac{\partial V_j^{(N)}}{\partial \alpha_k} \cdot \left( \dot{\alpha}_k- 2\beta_k \right)+ \frac{\partial V_j^{(N)}}{\partial \beta_k} \cdot \left( \dot{\beta}_k- B_k^{(N)} \right) \\
    &\qquad \qquad \qquad \qquad \qquad \qquad \qquad \qquad + \frac{\partial V_j^{(N)}}{\partial \lambda_k} \left( \dot{\lambda}_k- M_k^{(N)} \right) \Bigg].
\end{aligned} \end{equation}

Note that $\tilde{E}_j^{(N)}$ is set to be a function of $y_j$ instead of $x$. This is to align with a later statement. It does not matter whether $S_j^{(N)}$ is a function of $y_j$ or $x$, but we will let it be a function of $x$ for preciseness.

We remark that $\tilde{E}_j^{(N)}$ accounts for the error terms arising from the nonlinearity, while $S_j^{(N)}$ contains the error terms caused by the parameters. We introduce $M_j^{(N)}$ and $B_j^{(N)}$ to provide flexibility in controlling $\tilde{E}_j^{(N)}$.

Next, we show that we can choose $V_j^{(N)}$, $M_j^{(N)}$, and $B_j^{(N)}$ so that $\tilde{E}_j^{(N)}$ is small. This smallness will be a result of homogeneity, so we give the following definition.

\begin{defn}[\textbf{Admissible functions}] \ \par
Recalling \eqref{eq notation, gathered positions}, let $\Omega$ denote the space of non-collision positions:
\begin{equation} \begin{aligned}
    \Omega:= \Big\{ P= (\alpha, \beta, \lambda) \in \mathbb{R}^{3m} \times \mathbb{R}^{3m} \times \mathbb{R}_+^m \ \big| \ \alpha_j \neq \alpha_k,\ \forall j \neq k \Big\}.
\end{aligned} \end{equation}

(1) Let $n \in \mathbb{N}$. Define $S_n$ to be the set of functions $\sigma: \Omega \to \mathbb{C}$ that is homogeneous in $\alpha$ of degree $-n$ and is a finite sum of 
\begin{equation}
    c\prod_{j \neq k} (\alpha_j- \alpha_k)^{p_{jk}} |\alpha_j- \alpha_k|^{-q_{jk}} \prod_{j=1}^m \beta_j^{k_j} \lambda_j^{l_j},
\end{equation}
where $c \in \mathbb{C}$, $p_{jk} \in \mathbb{N}^3$, $q_{jk} \in \mathbb{N}$, $k_j \in \mathbb{N}^3$, $l_j \in \mathbb{Z}$ and $|p_{jk}| \le q_{jk}$.

(2) We say a function $u: \Omega \times \mathbb{R}^3 \to \mathbb{C}$ is \textbf{admissible} if $u$ is a finite sum of 
\begin{equation}
    \sigma(\alpha_1, \cdots, \alpha_m, \beta_1, \cdots, \beta_m, \lambda_1, \cdots \lambda_m) \tau(x),
\end{equation}
where $\sigma \in S_n$ for some $n \in \mathbb{N}$ and $\tau \in C^\infty$ satisfies
\begin{equation}
    \big| \nabla^k \tau(x) \big| \le e^{-c_k|x|}, \qquad \forall k \ge 0,\ x \in \mathbb{R}^3.
\end{equation}

If $n$ is the same for all addends, then we say $u$ is admissible of degree $n$. Otherwise, taking $n$ as the minimal one among all addends, we say $u$ is admissible of degree $\ge n$.
\end{defn}

Here are some properties of admissible functions.
\begin{lem} \label{lem properties of admissible functions}
Let $n,m \in \mathbb{N}$ and $u,v$ be admissible of degree $n,m$, respectively. Then
\begin{enumerate} [label=(\arabic*)]
    \item $\forall j$, $\frac{\partial u}{\partial \alpha_j}$ is admissible of degree $1+n$, and $\frac{\partial u}{\partial \beta_j}, \frac{\partial u}{\partial \lambda_j}$ are admissible of degree $n$.
    \item $uv$ is admissible of degree $n+m$;
    \item $\phi_u v$ is admissible of degree $n+m$;
    \item $\forall k \ge 1$, $\psi_u^{(k)} v$ is admissible of degree $k+n+m$;
    \item $\forall N \ge 1$, $\phi_u^{(N)} v$ is admissible of degree $\ge 1+n+m$;
    \item $\exists c>0$ such that for any compact set $K \subset \mathbb{R}^{3m} \times \mathbb{R}_+^m$, $\exists C_K>0$ such that
    \begin{equation} \label{eq estimate of admissible functions}
        |u(\alpha, \beta, \gamma, x)| \le \frac{C_K}{a^n} e^{-c|x|}, \quad \forall (\beta, \gamma) \in K,
    \end{equation}
    where $a= \min \limits_{j \neq k} |\alpha_j- \alpha_k|$ as in \eqref{eq notation, gathered positions}.
\end{enumerate}
\end{lem}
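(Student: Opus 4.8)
The plan is to reduce each of the six assertions, by linearity, to the case of a single summand $u=\sigma\tau$ with $\sigma\in S_n$ and $\tau\in C^\infty$ exponentially decaying together with all its derivatives, and likewise $v=\rho\omega$ with $\rho\in S_m$; one then follows separately what each operation does to the \emph{symbol} part (an element of some $S_\ell$) and to the \emph{profile} part (a smooth function of $x$ with exponentially decaying derivatives). Two closure facts, checked at the outset, are used over and over: $S_\ell\cdot S_{\ell'}\subset S_{\ell+\ell'}$ (the exponents and the constraints $|p_{jk}|\le q_{jk}$ add), and the class of profiles is stable under products and under multiplication by polynomials in $x$ (Leibniz together with $|x|^N e^{-c|x|}\lesssim e^{-c'|x|}$).

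\textbf{Items (1), (2).} These are purely algebraic. In (1), the operators $\partial_{\alpha_j}$, $\partial_{\beta_j}$, $\partial_{\lambda_j}$ annihilate $\tau(x)$, so only $\sigma$ is differentiated. Hitting a factor $(\alpha_{j'}-\alpha_{k'})^{p}$ lowers $|p|$ by one (constraint kept); hitting $|\alpha_{j'}-\alpha_{k'}|^{-q}$ raises $q$ by two and $|p|$ by one (so $|p|\le q$ becomes $|p|\le q+1$); either way the homogeneity degree in $\alpha$ drops by one, so $\partial_{\alpha_j}\sigma\in S_{n+1}$, while $\partial_{\beta_j}\sigma,\partial_{\lambda_j}\sigma$ merely lower a $\beta$- or $\lambda$-exponent and remain in $S_n$. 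Item (2) is then immediate from the two closure facts.

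\textbf{Items (3), (4), (5).} This is the analytic core, and the point is that neither $\phi_u$ nor $\psi_u^{(k)}$ is admissible on its own --- each has only polynomial decay in $x$ --- but multiplication by the exponentially decaying profile of $v$ repairs this. For (3), write $\phi_u=\Delta^{-1}u=\sum_i\sigma_i\Phi_i$ with $\Phi_i:=-\tfrac{1}{4\pi|x|}*\tau_i$; since $\tau_i$ is Schwartz, $\Phi_i$ is smooth with $|\nabla^k\Phi_i(x)|\lesssim\langle x\rangle^{-1}$ for every $k$ (integrate by parts to put the derivatives on $\tau_i$), so each $\Phi_i\omega_l$ is again a profile and $\phi_u v=\sum_{i,l}(\sigma_i\rho_l)(\Phi_i\omega_l)$ is admissible of degree $n+m$. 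For (4), expand $F_k(\alpha_{jk},\lambda_k\xi-\lambda_j y_j)$ as a homogeneous polynomial of degree $k-1$ in its second argument; up to combinatorial constants its coefficients are the derivatives $\partial_\alpha^\gamma(|\alpha|^{-1})$, $|\gamma|=k-1$, evaluated at $\alpha=\alpha_{jk}$, hence are homogeneous of degree $-k$ in $\alpha$ of the form $(\alpha_{jk})^p|\alpha_{jk}|^{-q}$ with $|p|=k-1$, $q=2k-1$, so they belong to $S_k$. Expanding the powers $(\lambda_k\xi-\lambda_j y_j)^\gamma$ and integrating $u(\xi)=\sum_i\sigma_i\tau_i(\xi)$ against them converts the $\xi$-variable into finite moments $\int\xi^\delta\tau_i(\xi)\,\d\xi$, which are constants; therefore $\psi_u^{(k)}$ is a finite sum of $S_{k+n}$-symbols (assembled from $\sigma_i$, the coefficient, and $\lambda$-monomials, which sit in $S_0$) times monomials in $y_j$ of degree $\le k-1$. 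Multiplying by $v$, each monomial times $\omega_l$ is again a profile and the symbols multiply into $S_{k+n+m}$, so $\psi_u^{(k)}v$ is admissible of degree $k+n+m$. Finally (5) follows at once, since $\phi_u^{(N)}v=\sum_{k=1}^N\psi_u^{(k)}v$ is a sum of admissible functions of degrees $k+n+m$, $k=1,\dots,N$, hence admissible of degree $\ge 1+n+m$.

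\textbf{Item (6).} Reduce again to $u=\sigma\tau$, and pass to the homogeneous representation of $\sigma$, so that each of its monomials is itself homogeneous of degree $-n$ in $\alpha$, i.e.\ $\sum_{j\neq k}(|p_{jk}|-q_{jk})=-n$ (the monomials of other degrees sum to zero and are dropped). Since $|\alpha_{jk}|\ge a$ and each exponent $|p_{jk}|-q_{jk}\le 0$, we get $|(\alpha_j-\alpha_k)^{p_{jk}}|\,|\alpha_j-\alpha_k|^{-q_{jk}}\le a^{|p_{jk}|-q_{jk}}$, and multiplying over all pairs gives $|\sigma|\le C_K a^{-n}$ once $(\beta,\lambda)$ lies in the compact set $K$ (the $\beta$- and $\lambda$-factors being then bounded); with $|\tau(x)|\le e^{-c_0|x|}$ and a finite summation this is \eqref{eq estimate of admissible functions}. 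I expect the only real obstacle to be item (4): one has to see $\psi_u^{(k)}$ as a polynomial in $y_j$ whose symbol coefficients carry exactly the right $\alpha$-homogeneity, which rests on the explicit structure of the Taylor remainders $F_n$ and the constraint $|p|\le q$ in $S_k$; the rest is organizational.
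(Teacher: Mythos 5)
The paper omits this proof entirely, with only the remark ``The proof of these properties is direct so we shall omit it,'' so there is no textual comparison to make; what follows is a check of your argument on its own terms.

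Your proposal is essentially correct, and the organizing principle — track the symbol part ($S_\ell$) and the profile part (smooth, exponentially decaying with all derivatives) separately through each operation, using the two closure facts $S_\ell \cdot S_{\ell'} \subset S_{\ell+\ell'}$ and the stability of profiles under products and polynomial multiplication — is exactly the right one. Items (1), (2), (5), (6) are clean; in (6) your observation that one must first discard the off-degree monomials (they sum to zero by the homogeneity hypothesis) before summing the per-pair bounds $|\alpha_{jk}|^{|p_{jk}|-q_{jk}}\le a^{|p_{jk}|-q_{jk}}$ is a genuine subtlety that many would miss, since without it the naive estimate can give the wrong power of $a$ when $a<1$. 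Item (3) correctly isolates why $\phi_u$ alone is not admissible and why multiplication by the profile of $v$ repairs it. Item (4) is the right argument, with one small imprecision worth flagging: $\partial_\alpha^\gamma(|\alpha|^{-1})$ with $|\gamma|=k-1$ is not a single monomial $\alpha^p|\alpha|^{-q}$ with $|p|=k-1$, $q=2k-1$, but rather a finite linear combination of such terms with various $(p,q)$ satisfying $|p|-q=-k$ and $|p|\le q$ (e.g.\ $\partial_i\partial_j|\alpha|^{-1}=-\delta_{ij}|\alpha|^{-3}+3\alpha_i\alpha_j|\alpha|^{-5}$ has both $(|p|,q)=(0,3)$ and $(2,5)$). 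Since every such term lies in $S_k$, the conclusion is unaffected, but the statement as written overclaims the uniformity of $(p,q)$. With that correction your proof of (4) goes through: expanding $(\lambda_k\xi-\lambda_j y_j)^\gamma$ multinomially, integrating the $\xi$-part against $u$'s profile to produce constant moments, and absorbing the $\lambda$-powers and the $F_k$-coefficients into the symbol gives a finite sum of $S_{k+n}$-symbols times $y_j$-polynomials of degree $\le k-1$, which becomes admissible of degree $k+n+m$ upon multiplication by $v$.
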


The proof of these properties is direct so we shall omit it. The point of considering admissible functions is that according to (6), they decay rapidly when $n$ is large.

Consider the linearized operators $L_+, L_-$ around $Q$ defined by
\begin{equation}
    L_+f:= -\Delta f+ f+ \phi_{Q^2}f+ 2\phi_{Qf}Q, \quad L_-f:= -\Delta f+ f+ \phi_{Q^2}f.
\end{equation}
By Theorem 4 in \cite{Lenzmanngroundstate}, $\{ \partial_1 Q, \partial_2 Q, \partial_3 Q\}$ spans $\ker L_+$, and $\{Q\}$ spans $\ker L_-$. Moreover, Lemma 2.4 in \cite{KMR2bodyHartree} asserts that when restricted to admissible functions, $\ker (L_{\pm})^\perp$ is exactly the range of $L_{\pm}$. A precise statement is as follows.

\begin{lem} \label{lem inverse of L+, L-}
Let $n \in \mathbb{N}$ and $f$ be real-valued and admissible of degree $n$. 
\begin{enumerate} [label=(\arabic*)]
    \item If $(f, \nabla Q)=0$, then $L_+u=f$ has a real-valued solution $u$ admissible of degree $n$. 
    \item If $(f, Q)=0$, then $L_-u=f$ has a real-valued solution $u$ admissible of degree $n$. 
\end{enumerate}
Furthermore, if $f$ is radial, then $u$ can be chosen to be radial.
\end{lem}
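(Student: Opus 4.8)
The plan is to peel off the parameter‑dependent coefficients and reduce the whole statement to the corresponding fact for a single exponentially decaying profile on $\mathbb{R}^3$, which is the substance of Lemma 2.4 in \cite{KMR2bodyHartree} — that proof concerns only $Q$ on $\mathbb{R}^3$ and is insensitive to the number of bodies. The only genuinely new content is the bookkeeping that turns the $m$-body admissible class into finitely many scalar equations.

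First I would write $f=\sum_{\ell=1}^L \sigma_\ell\,\tau_\ell$ with $\sigma_\ell\in S_n$ and $\tau_\ell\in C^\infty$ satisfying $|\nabla^k\tau_\ell|\le e^{-c_k|x|}$ for all $k$. Extracting a maximal $\mathbb{C}$-linearly independent subfamily $\sigma_{\ell_1},\dots,\sigma_{\ell_r}$ of $\{\sigma_\ell\}$ inside $S_n$ and writing each $\sigma_\ell$ as a $\mathbb{C}$-linear combination of these, I may assume
\[
    f=\sum_{s=1}^r \sigma_s(P)\,\tau_s(x),\qquad \sigma_1,\dots,\sigma_r\in S_n \text{ linearly independent on }\Omega,
\]
with each $\tau_s$ again smooth and exponentially decaying together with all derivatives (being a $\mathbb{C}$-linear combination of the old $\tau_\ell$). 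Since $L_+$ acts only in $x$, the hypothesis $(f,\nabla Q)=0$ reads $\sum_s \sigma_s(P)\,(\tau_s,\partial_i Q)=0$ for every $P\in\Omega$ and $i=1,2,3$, so linear independence of the $\sigma_s$ forces $(\tau_s,\partial_i Q)=0$ for all $s$ and $i$. Recalling $\ker L_+=\mathrm{span}\{\partial_1 Q,\partial_2 Q,\partial_3 Q\}$ and that $L_+$ is a self-adjoint compact perturbation of $-\Delta+1$ (so that $0$ is an isolated eigenvalue of finite multiplicity), the Fredholm alternative gives $w_s\in H^2(\mathbb{R}^3)$, orthogonal to $\ker L_+$, with $L_+w_s=\tau_s$; the exponential decay of $\tau_s$ and of $Q$, the exponential decay of the kernel of $(-\Delta+1)^{-1}$, an Agmon-type bootstrap and elliptic regularity then upgrade $w_s$ to a smooth function with $|\nabla^k w_s|\le C_k e^{-c_k'|x|}$ for all $k$ — this is exactly the single-profile content of \cite[Lemma 2.4]{KMR2bodyHartree}. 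Setting $u:=\sum_{s=1}^r\sigma_s(P)\,w_s(x)$ yields $L_+u=\sum_s\sigma_s\,L_+w_s=f$, and $u$ is admissible of degree $n$ because the $\sigma_s\in S_n$ are unchanged and the $w_s$ lie in the exponentially decaying class. The case of $L_-$ is identical with $Q$, $\ker L_-=\mathrm{span}\{Q\}$ and $(f,Q)=0$ in place of $\nabla Q$, $\ker L_+$ and $(f,\nabla Q)=0$.

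For the two refinements: if $f$ is real-valued then, since $L_+$ has real coefficients, $\overline{u}$ also solves $L_+\overline{u}=f$, and since each $\alpha_j-\alpha_k$ is real every $\sigma\in S_n$ has $\overline{\sigma}\in S_n$, so $\mathrm{Re}\,u=\tfrac12(u+\overline{u})$ is a real-valued admissible solution of degree $n$. If in addition $f$ is radial, then in the representation $f=\sum_s\sigma_s\tau_s$ with the $\sigma_s$ independent, radiality of $f$ forces each $\tau_s$ to be radial; then $(\tau_s,\partial_i Q)=0$ holds automatically by oddness of $\partial_i Q$, the operator $L_+$ preserves the radial subspace and has trivial kernel there (the $\partial_i Q$ are not radial), so $w_s$ may be taken radial by inverting $L_+$ on radial functions, whence $u=\sum_s\sigma_s w_s$ is radial. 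For $L_-$ one uses instead that $Q$ is radial, so $L_-$ still has $Q$ in its kernel on the radial subspace, but the orthogonality $(\tau_s,Q)=0$ supplies solvability there.

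I expect the one delicate point — used above as a black box — to be the passage from the $H^2$ Fredholm solution $w_s$ to exponential decay of $w_s$ together with all of its derivatives, i.e.\ that $u$ actually lands back in the admissible class; this uses that $L_\pm$ equals $-\Delta+1$ plus a potential that decays (with the Yukawa kernel controlling the source terms). This estimate is $m$-independent and is precisely what \cite{KMR2bodyHartree} carried out, so here it is imported directly, and the remaining work is only the coefficient-factorization argument above.
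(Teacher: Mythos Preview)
Your proof is correct and matches the paper's approach: the paper does not give an independent argument but simply invokes Lemma~2.4 of \cite{KMR2bodyHartree}, and your write-up is precisely the bookkeeping needed to see that this citation carries over from the $2$-body to the $m$-body admissible class --- factor $f=\sum_s\sigma_s\tau_s$ with the $\sigma_s\in S_n$ linearly independent, use that independence to push the orthogonality to each $\tau_s$, invert $L_\pm$ on each profile via the single-body Fredholm/Agmon argument from \cite{KMR2bodyHartree}, and reassemble. The real-valued and radial refinements you give are also the natural ones and are correct.
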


The following proposition constructs the approximate solutions.

\begin{prop} \label{prop construction of approximate bubbles}
For $n \ge 1$ and $1 \le j \le m$, there exist real-valued $m_j^{(n)}, b_j^{(n)} \in S_n$ and $T_j^{(n)}$ that is admissible of degree $n$ such that: for any $N \ge 1$, if setting 
\begin{gather*}
    V_j^{(N)}(P,y_j)= Q(y_j)+ \sum_{n=1}^N T_j^{(n)}(P,y_j), \\
    M_j^{(N)}(P)= \sum_{n=1}^N m_j^{(n)}(P) \quad \text{and} \quad B_j^{(N)}(P)= \sum_{n=1}^N b_j^{(n)}(P),    
\end{gather*}
then $\tilde{E}_j^{(N)}$ defined by \eqref{eq definition of E_j tilde} is admissible of degree $\ge N+1$.
\end{prop}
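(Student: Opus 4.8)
The plan is to construct the coefficients $T_j^{(n)}$, $m_j^{(n)}$, $b_j^{(n)}$ by induction on $n$, making $\tilde E_j^{(N)}$ gain one order of smallness at each step. First I would rewrite $\tilde E_j^{(N)}$ in terms of the linearized operators: since $V_j^{(N)} = Q + \sum_{n=1}^N T_j^{(n)}$ with $Q$ real and radial, the leading terms $\Delta V_j^{(N)} - V_j^{(N)} - (\phi_{|V_j^{(N)}|^2} + \sum_{k\neq j}\phi^{(N)}_{|V_k^{(N)}|^2})V_j^{(N)}$, when expanded, produce $\Delta Q - Q - \phi_{Q^2}Q = 0$ (the ground state equation \eqref{eq ground state}) at order $0$, and at order $n\ge 1$ produce $-L_+(\mathrm{Re}\,T_j^{(n)}) - iL_-(\mathrm{Im}\,T_j^{(n)})$ plus lower-order-in-$T$ but known admissible terms coming from $\psi^{(n)}_{Q^2,k}Q$ and from products of the $T^{(n')}$, $n'<n$, with $\phi$, $\psi^{(n'')}$ factors. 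The modulation terms $-i\lambda_j M_j^{(N)}\Lambda V_j^{(N)} - \lambda_j^3 B_j^{(N)}\cdot y_j V_j^{(N)} + i\lambda_j^2\sum_k(\cdots)$ also split by order; at order $n$ their contribution from the $Q$-part of $V_j$ is $-i\lambda_j m_j^{(n)}\Lambda Q - \lambda_j^3 b_j^{(n)}\cdot y_j Q$, while pieces involving $\partial T_{j}^{(n')}/\partial(\cdot)$ and products $m^{(n')}\Lambda T^{(n'')}$ etc. are again lower order and known by induction.

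Then, at induction step $n$, I would collect all degree-$n$ terms that are already determined by $Q$ and by $T^{(n')},m^{(n')},b^{(n')}$ for $n'<n$; call their sum $-G_j^{(n)}$, which is admissible of degree $n$ by Lemma~\ref{lem properties of admissible functions} (parts (2)--(5), noting $\psi^{(k)}_u v$ gains degree $k$, so each contribution lands at degree exactly $n$ or is pushed to degree $\ge n$). The requirement "$\tilde E_j^{(N)}$ admissible of degree $\ge N+1$" then forces, for each $1\le n\le N$, the degree-$n$ part to vanish:
\[
    L_+(\mathrm{Re}\,T_j^{(n)}) + i\lambda_j m_j^{(n)}\Lambda Q + \lambda_j^3 b_j^{(n)}\cdot y_j Q = \mathrm{Re}\,G_j^{(n)}, \qquad L_-(\mathrm{Im}\,T_j^{(n)}) = \mathrm{Im}\,G_j^{(n)} - (\text{imaginary modulation terms}).
\]
To solve this I would use Lemma~\ref{lem inverse of L+, L-}: $L_-$ is invertible on admissible functions orthogonal to $Q$, so after choosing the imaginary modulation coefficients to absorb the $(\cdot,Q)$-component (these are linear conditions in the $\dot\gamma$-type correction, automatically satisfiable), $\mathrm{Im}\,T_j^{(n)}$ exists and is admissible of degree $n$. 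For the $L_+$ equation I would first pick $m_j^{(n)}$ and $b_j^{(n)}\in S_n$ so that $\mathrm{Re}\,G_j^{(n)} - i\lambda_j m_j^{(n)}\Lambda Q - \lambda_j^3 b_j^{(n)}\cdot y_j Q$ is orthogonal to $\ker L_+ = \mathrm{span}\{\partial_1 Q,\partial_2 Q,\partial_3 Q\}$; since $(\partial_\ell Q, \partial_{\ell'} Q)$ and $(y_\ell Q,\partial_{\ell'}Q)$ pair nondegenerately (the latter equals $-\tfrac12\delta_{\ell\ell'}\|Q\|_{L^2}^2$ after integration by parts, and $(\Lambda Q,\partial_\ell Q)=0$ by parity), the three components of $b_j^{(n)}$ can be solved for, giving real elements of $S_n$; then Lemma~\ref{lem inverse of L+, L-} yields $\mathrm{Re}\,T_j^{(n)}$ admissible of degree $n$. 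Radiality is preserved wherever $G_j^{(n)}$ is radial.

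The main obstacle I expect is bookkeeping the homogeneity degrees so that everything lands where claimed — in particular verifying that the $\partial_t V_j^{(N)} = \sum_k (\partial V_j/\partial\alpha_k)\dot\alpha_k + \cdots$ terms, after the substitution $\dot\alpha_k = 2\beta_k + (\dot\alpha_k - 2\beta_k)$ etc. that defines the $\tilde E/S$ split, combine correctly with the modulation ansatz so that no net degree-$\le 0$ or fractional contributions survive, and that the error from truncating the Taylor expansion of $1/|\alpha-\zeta|$ at order $N$ (the $O(|\zeta|^{N+1}/|\alpha|^{N+1})$ remainder, which lives in \eqref{eq equation of approximate solution} but \emph{not} in $\tilde E_j^{(N)}$) is correctly excluded from this proposition's bound. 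A secondary technical point is checking that $\partial/\partial\alpha_k$ raises degree by exactly $1$ on the $S_n$-coefficients (Lemma~\ref{lem properties of admissible functions}(1)) so that the term $i\lambda_j^2\sum_k (\partial V_j^{(N)}/\partial\alpha_k)\cdot 2\beta_k$, which has a $\partial_{\alpha_k}$ hitting a degree-$n$ object, indeed contributes at degree $n+1$ and hence only feeds the induction at the \emph{next} step, which is exactly what makes the scheme close.
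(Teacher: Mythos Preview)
Your approach is essentially the same as the paper's: induction on the order, splitting into real and imaginary parts governed by $L_+$ and $L_-$, choosing the modulation coefficients to enforce the orthogonality needed to invoke Lemma~\ref{lem inverse of L+, L-}, and tracking degrees via Lemma~\ref{lem properties of admissible functions}.

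There is one point of confusion to fix. You place $i\lambda_j m_j^{(n)}\Lambda Q$ in the $L_+$ (real-part) equation and speak of a ``$\dot\gamma$-type correction'' handling the $L_-$ side. But $\tilde E_j^{(N)}$ contains no $\gamma$ term (that has been moved to $S_j^{(N)}$), and $-i\lambda_j m_j^{(n)}\Lambda Q$ is purely imaginary since $m_j^{(n)}$ is real. The correct split, as in the paper, is
\[
    L_+ (\mathrm{Re}\,T_j^{(n)}) = -\lambda_j^3 b_j^{(n)}\cdot y_j Q + (\text{known real terms}), \qquad
    L_- (\mathrm{Im}\,T_j^{(n)}) = -\lambda_j m_j^{(n)}\Lambda Q + (\text{known imaginary terms}),
\]
so that $b_j^{(n)}$ is chosen to kill the $\nabla Q$-component on the $L_+$ side (using $(y_j Q,\nabla Q)\neq 0$, which you computed), while $m_j^{(n)}$ is chosen to kill the $Q$-component on the $L_-$ side (using $(\Lambda Q,Q)=\tfrac12\|Q\|_{L^2}^2\neq 0$). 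With this correction your plan goes through exactly as the paper's proof does.
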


\begin{proof}
We construct the functions by induction in $N$. 

For $N=1$, we take $m_j^{(1)}= b_j^{(1)}=0$. Suppose $T_j^{(1)}$ is admissible of degree $1$ and real-valued. By Lemma \ref{lem properties of admissible functions} and \eqref{eq ground state}, we have
\begin{equation} \begin{aligned}
    \tilde{E}_j^{(1)} &= \Delta V_j^{(1)}- V_j^{(1)}+ i\lambda_j^2 \sum_{k=1}^m \frac{\partial V_j^{(1)}}{\partial \alpha_k} \cdot 2\beta_k - \bigg( \phi_{|V_j^{(1)}|^2}+ \sum_{k \neq j} \psi_{|V_k^{(1)}|^2}^{(1)} \bigg) V_j^{(1)} \\
    &= \Delta T_j^{(1)}- T_j^{(1)}- \phi_{Q^2} T_j^{(1)}- 2\phi_{Q T_j^{(1)}} Q- \sum_{k \neq j} \psi_{Q^2,k}^{(1)} Q+ error \\
    &= -L_+ T_j^{(1)}+ \sum_{k \neq j} \frac{\lambda_j^2 \Vert Q \Vert _{L^2}^2} {4\pi \lambda_k |\alpha_{jk}|} Q+ error,
\end{aligned} \end{equation}
where $error$ is admissible of degree $\ge 2$. Since $L_+(\Lambda Q)= -2Q$, we may take 
\begin{equation} \label{eq formula of T_j^1}
    T_j^{(1)}= - \sum_{k \neq j} \frac{\lambda_j^2 \Vert Q \Vert _{L^2}^2} {8\pi \lambda_k |\alpha_{jk}|} \Lambda Q
\end{equation}
to cancel the first two terms. This proves the conclusion when $N=1$.

Next, we construct $m_j^{(N+1)}, b_j^{(N+1)}$ and $T_j^{(N+1)}$ from the first $N$ terms. We have
\begin{equation} \begin{aligned}
    \tilde{E}_j^{(N+1)}-\tilde{E}_j^{(N)} &= \Delta T_j^{(N+1)}- T_j^{(N+1)}- i\lambda_j m_j^{(N+1)} \Lambda Q- \lambda_j^3 b_j^{(N+1)} \cdot y_j Q \\
    &\quad - \phi_{Q^2} T_j^{(N+1)}- 2\phi_{\mathrm{Re} \left( Q \overline{T_j^{(N+1)}} \right)} Q- \sum_{k \neq j} \psi_{Q^2,k}^{(N+1)} Q+ error \\
    &= -\Big( L_+ X_j^{(N+1)}+ \lambda_j^3 b_j^{(N+1)} \cdot y_j Q+ \sum_{k \neq j} \psi_{Q^2,k}^{(N+1)} Q \Big) \\
    &\quad -i\left( L_- Y_j^{(N+1)}+ \lambda_j m_j^{(N+1)} \Lambda Q \right)+ error,
\end{aligned} \end{equation}
where
\begin{equation}
    X_j^{(N+1)}= \mathrm{Re} \ T_j^{(N+1)}, \quad Y_j^{(N+1)}= \mathrm{Im} \ T_j^{(N+1)}, \quad error= I_1+ I_2+ I_3+ I_4,
\end{equation}
and
\begin{align} 
    I_1 &= -i \lambda_j m_j^{(N+1)} \Lambda \left( V_j^{(N+1)}-Q \right)- i\lambda_j M_j^{(N)} \Lambda T_j^{(N+1)} \\
    &\quad - \lambda_j^3 b_j^{(N+1)} \cdot y_j \left( V_j^{(N+1)}-Q \right)- \lambda_j^3 B_j^{(N)} \cdot y_j T_j^{(N+1)},
\end{align}
\begin{align}
    I_2 &= -\phi_{\left| V_j^{(N)} \right|^2- Q^2} T_j^{(N+1)}- 2\phi_{\mathrm{Re} \left( V_j^{(N)} \overline{T_j^{(N+1)}} \right)} \left( V_j^{(N+1)}- Q \right) \\
    &\quad - 2\phi_{\mathrm{Re} \left( \left( V_j^{(N)}- Q \right) \overline{T_j^{(N+1)}} \right)} Q- \phi_{\left| T_j^{(N+1)} \right|^2} V_j^{(N+1)},
\end{align}
\begin{align}
    I_3 &= -\sum_{k \neq j} \Bigg( \psi_{\left| V_k^{(N)} \right|^2- Q^2}^{(N+1)} V_j^{(N+1)}+ \psi_{Q^2,k}^{(N+1)} \left( V_j^{(N+1)}-Q \right)+ \phi_{\left| V_k^{(N)} \right|^2}^{(N)} T_j^{(N+1)} \\
    &\qquad \qquad \qquad \qquad \qquad \qquad + 2\phi_{\mathrm{Re} \left(V_k^{(N)} \overline{T_k^{(N+1)}} \right)}^{(N)} V_j^{(N+1)}+  \phi_{\left| T_k^{(N+1)} \right|^2}^{(N)} V_j^{(N+1)} \Bigg),
\end{align}
\begin{align}
    I_4 &= i\lambda_j^2 \sum_{k=1}^m \Bigg( \frac{\partial T_j^{(N+1)}}{\partial \alpha_k} \cdot 2\beta_k+ \frac{\partial T_j^{(N+1)}}{\partial \beta_k} \cdot B_k^{(N)}+ \frac{\partial V_j^{(N+1)}}{\partial \beta_k} \cdot b_k^{(N+1)} \\
    &\qquad \qquad \qquad \qquad \qquad \qquad \quad +\frac{\partial T_j^{(N+1)}}{\partial \lambda_k} \cdot M_k^{(N)}+ \frac{\partial V_j^{(N+1)}}{\partial \lambda_k} \cdot m_k^{(N+1)} \Bigg).
\end{align}

Assume $m_j^{(N+1)}, b_j^{(N+1)} \in S_{N+1}$ and $T_j^{(N+1)}$ is admissible of degree $N+1$. Using Lemma \ref{lem properties of admissible functions}, we see $error$ is admissible of degree $\ge N+2$. Thus it suffices to require
\begin{equation} 
    \left\{ \begin{aligned}
        &L_+ X_j^{(N+1)}= -\lambda_j^3 b_j^{(N+1)} \cdot y_j Q- \sum_{k \neq j} \psi_{Q^2,k}^{(N+1)} Q+ \mathrm{Re} \ \hat{E}_j^{(N)}, \\
        &L_- Y_j^{(N+1)}= -\lambda_j m_j^{(N+1)} \Lambda Q+ \mathrm{Im} \ \hat{E}_j^{(N)},
    \end{aligned} \right.
\end{equation}
where $\hat{E}_j^{(N)}$ is the sum of terms in $\tilde{E}_j^{(N)}$ that are admissible of degree $N+1$. 

Recall that we let the right hand sides be functions of $y_j$, so they are admissible of degree $N+1$. By Lemma \ref{lem inverse of L+, L-}, it suffices to require
\begin{equation} 
    \left\{ \begin{aligned}
        &\Big( \lambda_j^3 b_j^{(N+1)} \cdot y_j Q+ \sum_{k \neq j} \psi_{Q^2,k}^{(N+1)} Q- \mathrm{Re} \ \hat{E}_j^{(N)}, \nabla Q \Big)=0, \\
        &\left( \lambda_j m_j^{(N+1)} \Lambda Q- \mathrm{Im} \ \hat{E}_j^{(N)}, Q \right)=0.
    \end{aligned} \right.
\end{equation}
Such $b_j^{(N+1)}$ and $m_j^{(N+1)}$ exist because $(y_jQ, \nabla Q) \neq 0$ and $(\Lambda Q, Q) \neq 0$.
\end{proof}

\subsection{Accuracy of approximate solutions}

We verify the accuracy of $R^{(N)}$ as an approximate solution where $V_j^{(N)}$ is determined in Proposition \ref{prop construction of approximate bubbles}. We start with some estimates following from the definition of admissible functions. 

Let $\tilde{\Omega}= \Omega \times (\mathbb{R} / 2\pi \mathbb{Z})^m$ denote the space of modulation parameters and $g \in \tilde{\Omega}$. If $K$ is a compact set in $\mathbb{R}^{3m} \times \mathbb{R}_+^m$ and $(\beta, \gamma) \in K$, then by \eqref{eq decay of Q} and \eqref{eq estimate of admissible functions}, we have
\begin{equation} \label{eq decay of V_j}
    |V_j^{(N)}| \le Ce^{-c|y_j|}+ C_N a^{-1} e^{-c_N|y_j|},
\end{equation}
which also yields
\begin{equation} \label{eq decay of R_j}
    |R_{j,g}^{(N)}| \le Ce^{-c|x-\alpha_j|}+ C_N a^{-1} e^{-c_N|x-\alpha_j|}.
\end{equation}
By definition, $R_g^{(N)}$ is $C^1$ in $g$. Thus, if $g' \in \Omega$ and $(\beta', \gamma') \in K$, then
\begin{equation} \label{eq dependence of R in parameters}
    \left\Vert R^{(N)}_g- R^{(N)}_{g'} \right\Vert_{H^1} \le C_N \Vert g-g' \Vert. 
\end{equation} 
Finally, since $m_j^{(1)}= b_j^{(1)}=0$, we have
\begin{equation} \label{eq estimate of M_j, B_j}
    |M_j^{(N)}| \le Ca^{-2}+ C_N a^{-3}, \quad |B_j^{(N)}| \le Ca^{-2}+ C_N a^{-3}.
\end{equation}
Here, the capital constants depend on $K$, while the little ones do not.

The next lemma consists of two localization properties. The first item shows that the cross term about $R_j$ in \eqref{eq equation of approximate solution} does not matter. The second item will be used later.

\begin{lem} \label{lem localization}
Let $p \neq q \in \mathbb{R}^3$ and $u,v$ be functions such that
\begin{equation}
    |u(x)| \le e^{-|x-p|}, \quad |v(x)| \le e^{-|x-q|}, \quad \forall x \in \mathbb{R}^3.
\end{equation}
Then there exist absolute constants $C,c>0$ such that:
\begin{enumerate} [label=(\arabic*)]
    \item $\Vert \phi_{uv} \Vert_{L^\infty} \le C e^{-c|p- q|}$;
    \item If $f \in L^2$, then $\Vert \phi_{fu} fv \Vert_{L^1} \le C \max \Big\{ \frac{e^{-\frac{1}{2} |p-q|}} {|p-q|^{\frac{1}{2}}}, \frac{1}{|p-q|} \Big\} \Vert f \Vert_{L^2}^2$.
\end{enumerate}
\end{lem}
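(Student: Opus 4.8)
The plan is to treat both items by splitting $\mathbb{R}^3$ into regions according to proximity to $p$ and $q$, and estimating the Newtonian potential on each region. Throughout write $d = |p-q|$ and let $B_p = B(p, d/3)$, $B_q = B(q, d/3)$, and $\mathcal{O}$ the complement of $B_p \cup B_q$; these are disjoint since $d/3 + d/3 < d$.

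For item (1), recall $\phi_{uv}(x) = \frac{1}{4\pi}\int \frac{|u(y)v(y)|}{|x-y|}\,dy$ (up to sign, which is irrelevant for the $L^\infty$ bound), and by hypothesis $|u(y)v(y)| \le e^{-|y-p|}e^{-|y-q|}$. For any $y$ we have $|y-p| + |y-q| \ge d$, so $|u(y)v(y)| \le e^{-d/2} e^{-\frac{1}{2}(|y-p| + |y-q|)} \le e^{-d/2} e^{-\frac{1}{2}|y-p|}$. Then for fixed $x$, split the $y$-integral into $|x-y| \le 1$ and $|x-y| > 1$: on the first piece $\int_{|x-y|\le 1}\frac{dy}{|x-y|} \le C$, and on the second piece $\frac{1}{|x-y|} \le 1$ while $\int e^{-\frac12|y-p|}\,dy \le C$. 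This gives $\|\phi_{uv}\|_{L^\infty} \le C e^{-d/2}$, which is the claim with $c = 1/2$. (One could also absorb the local singularity by $\|\phi_{uv}\|_{L^\infty} \lesssim \|uv\|_{L^1}^{1/2}\|uv\|_{L^\infty}^{1/2}$ type bounds, but the direct split is cleaner.)

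For item (2), write $\|\phi_{fu}\, fv\|_{L^1} \le \int |\phi_{fu}(x)|\,|f(x)|\,|v(x)|\,dx$ and estimate $|\phi_{fu}(x)| = \frac{1}{4\pi}\big|\int \frac{f(y)\overline{f(y)}\,\text{(weight)}}{\cdots}\big|$ — more precisely $|\phi_{fu}(x)| \le \frac{1}{4\pi}\int \frac{|f(y)|^2 e^{-|y-p|}}{|x-y|}\,dy$. The key elementary estimate is the following: for a probability-type density $|f|^2$, one has by Cauchy--Schwarz in $y$ applied to the kernel that is locally $L^2$,
\[
    \int \frac{|f(y)|^2 e^{-|y-p|}}{|x-y|}\,dy \le \Big( \int_{|x-y|\le 1} \frac{e^{-2|y-p|}}{|x-y|^2}\,dy \Big)^{1/2}\|f\|_{L^2}^2 + \Big(\sup_{|x-y|>1} \frac{e^{-|y-p|}}{|x-y|}\Big)\|f\|_{L^2}^2,
\]
wait—this is not quite right dimensionally; instead I will bound $|\phi_{fu}(x)| \le \|f\|_{L^2}^2 \cdot \big\| |x-\cdot|^{-1} e^{-|\cdot - p|} \big\|_{L^\infty_y}$ is false too since $|x-y|^{-1}$ is not integrable against $\|f\|_{L^2}^2$ pointwise. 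The correct route: $|\phi_{fu}(x)| \lesssim \|f\|_{L^2}^2 \big( \|\,|x-\cdot|^{-1}e^{-|\cdot-p|}\|_{L^\infty} \big)$ only controls the far part; for the near part use that $|x-y|^{-1} \in L^2_{loc}$ is not enough, so instead keep $\phi_{fu}$ as is and integrate in $x$ first, using $\int |\phi_{fu}(x)| |f(x)||v(x)|\,dx \le \frac{1}{4\pi}\iint \frac{|f(y)|^2 e^{-|y-p|} |f(x)| e^{-|x-q|}}{|x-y|}\,dy\,dx$ and then applying Cauchy--Schwarz in $x$ with weight, or more simply Young/Hardy--Littlewood--Sobolev: $\|\phi_{fu}\|_{L^6} \lesssim \||f|^2 e^{-|\cdot-p|}\|_{L^{6/5}} \le \|f\|_{L^2}^2 \|e^{-|\cdot-p|}\|_{L^3}$ (Hölder with $\frac{1}{6/5} = \frac12 + \frac12 \cdot \frac{?}{}$ — check exponents), hence $\|\phi_{fu} fv\|_{L^1} \le \|\phi_{fu}\|_{L^6}\|fv\|_{L^{6/5}} \le \|\phi_{fu}\|_{L^6}\|f\|_{L^2}\|v\|_{L^3}$, which would give the bound without any $d$-decay. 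To recover the decay, I split $v = v\mathbf{1}_{B_p} + v\mathbf{1}_{B_p^c}$ and $u$ analogously: where $v$ is evaluated near $q$ but integrated against a kernel centered effectively at $p$, one gains either $e^{-d/2}$ (on the overlap region, via $|y-p|+|y-q|\ge d$ as in item (1)) or the polynomial factor $|x-y|^{-1} \lesssim d^{-1}$ when $x \in B_q$ and the bulk of the $y$-mass producing $\phi_{fu}$ sits near $p$ so $|x-y| \gtrsim d$. Carefully bookkeeping these three regimes ($x$ near $p$, $x$ near $q$, $x$ in the exterior) and in each the two regimes for $y$, one collects the stated maximum $\max\{ d^{-1/2}e^{-d/2}, d^{-1}\}$: the $e^{-d/2}$ comes with a $d^{-1/2}$ loss from integrating a Gaussian-type tail against $|x-y|^{-1}$ near the singularity, and the pure $d^{-1}$ term comes from the regime where no exponential gain is available but the kernel is uniformly $\lesssim d^{-1}$.

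The main obstacle will be item (2): one must be careful that the factor $|f|^2$ is only in $L^1$ (not better), so all gains must come either from the exponential weights $e^{-|\cdot - p|}, e^{-|\cdot-q|}$ localizing $u,v$, or from the $|x-y|^{-1}$ kernel being large only on a small set. The cleanest organization is to bound, for each of the (finitely many) region pairs, the relevant integral by $\|f\|_{L^2}^2$ times a sup-norm or $L^2_y$-norm of the kernel-times-weight, and then observe that in every pair one of the two advertised decaying factors appears. Keeping track of which pair produces $d^{-1/2}e^{-d/2}$ versus $d^{-1}$, and verifying the $d^{-1}$ regime genuinely has no exponential decay (so that the $\max$ is sharp), is the delicate part; the rest is routine.
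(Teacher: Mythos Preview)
Your argument for item (1) is correct and essentially the same as the paper's: the paper invokes Hardy--Littlewood--Sobolev to reduce to $\|uv\|_{L^{3/2}}$ and then uses the same triangle-inequality observation $|y-p|+|y-q|\ge d$, while you integrate the Newton kernel directly; the two are interchangeable.

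For item (2) there is first a slip that propagates through everything afterward: $\phi_{fu}$ is the Newtonian potential of the \emph{product} $fu$, so
\[
    |\phi_{fu}(x)| \le \frac{1}{4\pi}\int \frac{|f(y)|\,e^{-|y-p|}}{|x-y|}\,\d y,
\]
with a \emph{single} factor of $|f(y)|$, not $|f(y)|^2$. The double integral to control is therefore
\[
    \iint \frac{|f(x)|\,|f(y)|}{|x-y|}\, e^{-|y-p|-|x-q|}\,\d x\,\d y,
\]
quadratic (not cubic) in $f$. This also repairs your H\"older exponents: $\|fu\|_{L^{6/5}}\le \|f\|_{L^2}\|u\|_{L^3}$ via $\tfrac{5}{6}=\tfrac12+\tfrac13$.

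More importantly, your plan for (2) never settles on a workable decomposition; splitting separately on the positions of $x$ and $y$ relative to $p,q$ can be made to work but generates many cases, and you do not carry any of them through. The paper's organizing principle is much cleaner and is the idea you are missing: split only on whether $|x-y|\ge \tfrac12 d$ or $|x-y|<\tfrac12 d$. On the first region the kernel is $\le 2/d$, and one Cauchy--Schwarz against each exponential gives the $d^{-1}\|f\|_{L^2}^2$ term immediately. On the second region the triangle inequality along $p\to x\to y\to q$ forces $|x-p|+|y-q|\ge d-|x-y|>\tfrac12 d$, so one extracts $e^{-d/2}$ while still retaining, say, $e^{-\frac14|x-p|}$ to make the outer integral converge; the inner integral $\int_{|y-x|\le d/2}|f(y)|\,|x-y|^{-1}\,\d y$ is then controlled by Cauchy--Schwarz using $|x-y|^{-1}\in L^2(B_{d/2})$. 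That single dichotomy on $|x-y|$ replaces all of the region bookkeeping you outline.
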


\begin{proof} \ \par

(1) Using the Hardy-Littlewood-Sobolev inequality, we get $\Vert \phi_{uv} \Vert_{L^\infty} \le C \Vert uv \Vert_{L^{3/2}}$. Note that either $|x-p| \ge \frac{1}{2} |p-q|$ or $|x-q| \ge \frac{1}{2} |p-q|$. In the first case, we use $|u(x)| \le e^{-c|p-q|}$ and $\Vert v \Vert_{L^{3/2}} \le C$ to conclude, and the second case is similar.

(2) We have
\begin{equation}
    \Vert \phi_{fu} fv \Vert_{L^1} \le C\iint \frac{|f(x)||f(y)|}{|x-y|} e^{-|x-p|-|y-q|} \d x \d y
\end{equation}
The integral on the region $|x-y| \ge \frac{1}{2} |p-q|$ is easily bounded by $\frac{C}{|p- q|} \Vert f \Vert_{L^2}^2$.

If $|x-y|< \frac{1}{2} |p-q|$, then $|x-p|+|y-q| \ge \frac{1}{2} |p-q|+ \frac{1}{4} |x-p|$, so by Cauchy-Schwarz,
\begin{equation} \begin{aligned}
    & \iint_{|x-y|< \frac{1}{2} |p-q|} \frac{|f(x)||f(y)|}{|x-y|} e^{-|x-p|-|y-q|} \d x \d y \\
    \le &\ e^{-\frac{1}{2} |p-q|} \int |f(x)| e^{-\frac{1}{4} |x-p|} \Big( \int_{|y-x| \le \frac{1}{2} |p-q|} \frac{|f(y)|}{|y-x|} \d y \Big) \d x\\
    \le &\ Ce^{-\frac{1}{2} |p-q|} |p-q|^{-\frac{1}{2}} \Vert f \Vert_{L^2} \int |f(x)| e^{-\frac{1}{4} |x-p|} \d x \le C \frac{e^{-\frac{1}{2} |p-q|}}{|p-q|^{\frac{1}{2}}} \Vert f \Vert_{L^2}^2. 
\end{aligned} \end{equation}
We then obtain the conclusion.
\end{proof}

The following is the main result in this subsection. It estimates the extent to which $R_g^{(N)}$, defined by \eqref{eq approximate solution}, satisfies the Hartree equation \eqref{eq hartree}.

\begin{prop} \label{prop accuracy of approximate solution}
Let $c_0, C_0>0$ and suppose $g \in C^1(\mathbb{R}_+, \tilde{\Omega})$ satisfies
\begin{equation} \label{eq boundedness of g}
    a \ge c_0, \quad |\beta| \le C_0, \quad c_0 \le \lambda_j \le C_0.
\end{equation}
Let $V_j^{(N)}$, $M_j^{(N)}$ and $B_j^{(N)}$ be as in Proposition \ref{prop construction of approximate bubbles}, $R_g^{(N)}$ be defined by \eqref{eq approximate solution}, and 
\begin{equation} \label{eq definition of Psi}
    \Psi^{(N)}= i\partial_t R_g^{(N)}+ \Delta R_g^{(N)}- \phi_{|R_g^{(N)}|^2} R_g^{(N)}- \sum_{j=1}^m \frac{1}{\lambda_j^4} S_j^{(N)} e^{-i\gamma_j+ i\beta_j \cdot x}.     
\end{equation}
Then there exist $c, C>0$ depending on $c_0$, $C_0$ and $N$ such that
\begin{equation} \label{eq estimate of Psi}
    |\Psi^{(N)}(t,x)| \le \frac{C}{a^{N+1}(t)}  \max \limits_j e^{-c|x-\alpha_j(t)|}.
\end{equation} 
\end{prop}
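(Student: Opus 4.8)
The plan is to compute $\Psi^{(N)}$ explicitly using the decomposition \eqref{eq equation of approximate solution} together with the splitting $E_j^{(N)} = \tilde{E}_j^{(N)} + S_j^{(N)}$ from \eqref{eq definition of E_j tilde}--\eqref{eq definition of S_j^N}, and then show that every term that remains after subtracting $\sum_j \lambda_j^{-4} S_j^{(N)} e^{-i\gamma_j + i\beta_j\cdot x}$ is controlled by the right-hand side of \eqref{eq estimate of Psi}. Substituting \eqref{eq equation of approximate solution} into \eqref{eq definition of Psi}, we get
\[
    \Psi^{(N)} = \sum_{j=1}^m \frac{1}{\lambda_j^4} \tilde{E}_j^{(N)}(t,y_j)\, e^{-i\gamma_j + i\beta_j\cdot x} - \sum_{j}\sum_{k\neq j} \phi_{\mathrm{Re}(R_k^{(N)} \overline{R_j^{(N)}})} R^{(N)} + \sum_{j}\frac{1}{\lambda_j^4}\sum_{k\neq j} \Big[ \phi_{|V_k^{(N)}|^2}^{(N)} - \big(\tfrac{\lambda_j}{\lambda_k}\big)^2 \phi_{|V_k^{(N)}|^2}\big(\cdots\big) \Big] V_j^{(N)} e^{-i\gamma_j+i\beta_j\cdot x}.
\]
So there are three groups of terms to estimate: the admissible remainder $\tilde{E}_j^{(N)}$, the solitary-wave cross terms, and the Taylor-expansion tail of the nonlinear potential.

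For the first group, Proposition \ref{prop construction of approximate bubbles} gives that $\tilde{E}_j^{(N)}$ is admissible of degree $\ge N+1$; under the hypotheses \eqref{eq boundedness of g} the parameters $(\beta,\lambda)$ lie in a fixed compact set and $a \ge c_0$, so property (6) of Lemma \ref{lem properties of admissible functions} (i.e. \eqref{eq estimate of admissible functions}) yields $|\tilde{E}_j^{(N)}(t,y_j)| \le C a^{-(N+1)} e^{-c|y_j|}$; since $\lambda_j^{-4}$ is bounded and $|y_j| = \lambda_j^{-1}|x-\alpha_j| \gtrsim |x-\alpha_j|$, this contributes a term bounded by $C a^{-(N+1)} e^{-c|x-\alpha_j|}$, which is absorbed into $\max_j e^{-c|x-\alpha_j|}$. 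For the third group, by part (5) of Lemma \ref{lem properties of admissible functions} combined with the very construction of $\phi^{(N)}_{|V_k^{(N)}|^2}$ as the $N$-term Taylor approximation, the difference $\phi_{|V_k^{(N)}|^2}^{(N)} - (\lambda_j/\lambda_k)^2 \phi_{|V_k^{(N)}|^2}(\cdots)$ has a leading contribution of homogeneity degree $N+1$ in $\alpha$ (the first omitted Taylor term), so after multiplying by $V_j^{(N)}$ and using \eqref{eq decay of V_j} it is again admissible of degree $\ge N+1$ and bounded by $C a^{-(N+1)} e^{-c|x-\alpha_j|}$; here one must quantify the Taylor remainder $O(|\zeta|^{N+1}/|\alpha|^{N+1})$ against the Gaussian/exponential decay of $Q^2$, which is routine since $|\zeta| = |\lambda_k\xi - \lambda_j y_j|$ is integrated against $|v_k|^2$ and localized near the respective centers.

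For the second group, the solitary-wave cross terms, I would use part (1) of Lemma \ref{lem localization}: since $|R_{k,g}^{(N)}| \lesssim e^{-c|x-\alpha_k|}$ and $|R_{j,g}^{(N)}| \lesssim e^{-c|x-\alpha_j|}$ by \eqref{eq decay of R_j} (rescaling $x$ so the exponents are $1$), we get $\|\phi_{\mathrm{Re}(R_k^{(N)}\overline{R_j^{(N)}})}\|_{L^\infty} \le C e^{-c|\alpha_j - \alpha_k|} \le C e^{-c a}$. Multiplying by $R^{(N)} = \sum_\ell R_\ell^{(N)}$, which is bounded pointwise by $C\max_\ell e^{-c|x-\alpha_\ell|}$, we obtain a bound $C e^{-ca} \max_\ell e^{-c|x-\alpha_\ell|}$; since $a \ge c_0 > 0$, the factor $e^{-ca}$ beats any negative power $a^{-(N+1)}$ up to a constant depending on $c_0$ and $N$, so this too fits into \eqref{eq estimate of Psi}. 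Collecting the three bounds and renaming $c, C$ gives \eqref{eq estimate of Psi}.

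The main obstacle I anticipate is bookkeeping rather than conceptual: one must carefully match the exponential decay rates across all the rescalings (the $y_j$ versus $x$ variables carry different exponential constants, and the $\max_j$ in \eqref{eq estimate of Psi} is what makes these compatible), and one must correctly identify which pieces of the difference $\phi^{(N)} - \phi$ are genuinely of order $a^{-(N+1)}$ — this is exactly where the homogeneity structure encoded in the definition of admissible functions and in $S_n$ does the work. One subtlety worth checking explicitly: the term $\phi_{|V_k^{(N)}|^2}^{(N)}$ is admissible only of degree $\ge 1 + \deg$, not of a single homogeneity, so the ``$\ge N+1$'' phrasing must be tracked consistently, but since \eqref{eq estimate of admissible functions} bounds admissible functions of degree $\ge n$ by $a^{-n}$ (the minimal degree), this causes no problem as long as $N$ is fixed and $a \ge c_0$.
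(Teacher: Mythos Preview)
Your overall decomposition into three groups and your treatment of the first two groups match the paper exactly: the $\tilde{E}_j^{(N)}$ term is handled via Proposition~\ref{prop construction of approximate bubbles} and \eqref{eq estimate of admissible functions}, and the cross terms via Lemma~\ref{lem localization}(1) combined with \eqref{eq decay of R_j}.

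The third group, however, contains a genuine gap. You claim that the difference
\[
    \Big[\phi_{|V_k^{(N)}|^2}^{(N)} - \big(\tfrac{\lambda_j}{\lambda_k}\big)^2 \phi_{|V_k^{(N)}|^2}\big(\tfrac{\lambda_j}{\lambda_k}y_j + \tfrac{\alpha_{jk}}{\lambda_k}\big)\Big] V_j^{(N)}
\]
is admissible of degree $\ge N+1$. This is not correct: the true convolution $\phi_{|V_k^{(N)}|^2}$ is a nonlocal object which is \emph{not} a finite sum of products $\sigma(P)\tau(y_j)$, so the difference does not belong to the class of admissible functions at all, and neither part~(5) of Lemma~\ref{lem properties of admissible functions} nor \eqref{eq estimate of admissible functions} applies. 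Your follow-up remark that one should ``quantify the Taylor remainder against the exponential decay of $Q^2$'' points in the right direction, but calling it routine skips the actual content: the Taylor remainder bound $|\zeta|^{N+1}/|\alpha|^{N+1}$ is only valid for $|\zeta| \lesssim |\alpha|$, and here $\zeta = \lambda_k\xi - \lambda_j y_j$ can be large either because $|y_j|$ is large or because the integration variable $|\xi|$ is large.

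The paper handles this by proving the pointwise claim \eqref{eq accuaracy of dipole expansion, preparation}--\eqref{eq accuaracy of dipole expansion} directly. One first observes that the difference is always bounded by $C(1+|y_j|)^{N+1}$ (trivial bound via Hardy--Littlewood--Sobolev plus the polynomial growth of the $F_n$), which suffices in the region $\lambda_j|y_j| \ge |\alpha_{jk}|/3$. In the complementary region $\lambda_j|y_j| \le |\alpha_{jk}|/3$ one splits the $\xi$-integral into $\lambda_k|\xi| \ge |\alpha_{jk}|/3$ (where the exponential decay \eqref{eq decay of V_j} of $V_k$ gives $e^{-c|\alpha_{jk}|}$) and $\lambda_k|\xi| \le |\alpha_{jk}|/3$ (where now $|\zeta| \le \tfrac{2}{3}|\alpha_{jk}|$ and the Taylor remainder estimate is legitimate). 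Combining the two regions yields the uniform bound $C(1+|y_j|)^{2N+2}/a^{N+1}$, and multiplication by the exponentially decaying $V_j^{(N)}$ then gives \eqref{eq estimate of Psi}. This region-splitting is the missing step in your sketch.
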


\begin{proof}
For simplicity, we omit the superscript $N$ and the subscript $g$.

By \eqref{eq equation of approximate solution}, we have
\begin{equation} \begin{aligned}
    \Psi &= \sum_{j=1}^m \frac{1}{\lambda_j^4} \tilde{E}_j(t,y_j) e^{-i\gamma_j+ i\beta_j \cdot x}+ \sum_{j \neq k} \phi_{\mathrm{Re} (R_j \overline{R_k})} R \\
    &\ +\sum_{j=1}^m \frac{1}{\lambda_j^4} \sum_{k \neq j} \left[ \phi_{\left| V_k \right|^2}^{(N)}- \left( \frac{\lambda_j}{\lambda_k} \right)^2 \phi_{|V_k|^2} \left( P(t), \frac{\lambda_j}{\lambda_k} y_j+ \frac{\alpha_{jk}}{\lambda_k} \right) \right] V_j e^{-i\gamma_j+ i\beta_j \cdot x}.
\end{aligned} \end{equation}
The first term is controlled using Proposition \ref{prop construction of approximate bubbles} and \eqref{eq estimate of admissible functions}. The second term is controlled using Lemma \ref{lem localization} and \eqref{eq decay of R_j}. For the last term, we claim that
\begin{equation} \label{eq accuaracy of dipole expansion, preparation}
    \left| \phi_{\left| V_k \right|^2}^{(N)}- \left( \frac{\lambda_j}{\lambda_k} \right)^2 \phi_{|V_k|^2} \left( \frac{\lambda_j}{\lambda_k} y_j+ \frac{\alpha_{jk}}{\lambda_k} \right) \right| \le \left\{ \begin{aligned}
        &C (1+|y_j|)^N,\ \lambda_j |y_j| \ge \frac{|\alpha_{jk}|}{3}, \\
        &\frac{C(1+|y_j|)^N}{|\alpha_{jk}|^{N+1}},\ \lambda_j |y_j| \le \frac{|\alpha_{jk}|}{3}.
    \end{aligned} \right.
\end{equation}
Since $F_n$ is homogeneous of degree $n$ in $\zeta$, using \eqref{eq decay of V_j}  and the Hardy-Littlewood-Sobolev inequality, we see the left hand side of \eqref{eq accuaracy of dipole expansion, preparation} is always bounded by $C (1+|y_j|)^N$, so we focus on the case when $\lambda_j |y_j| \le \frac{|\alpha_{jk}|}{3}$. We write
\begin{equation} \begin{aligned}
    LHS &= \frac{\lambda_j^2}{4\pi \lambda_k} \int |V_k(\xi)|^2 \Big| \frac{1}{|\alpha_{jk}- \lambda_k \xi+ \lambda_j y_j|}- \sum_{n=1}^N F_n(\alpha_{jk}, \lambda_k \xi- \lambda_j y_j) \Big| \d \xi \\
    &= \frac{\lambda_j^2}{4\pi \lambda_k} \left( \int_{\lambda_k |\xi| \ge \frac{|\alpha_{jk}|}{3}}+ \int_{\lambda_k |\xi| \le \frac{|\alpha_{jk}|}{3}} \right) \triangleq \frac{\lambda_j^2}{4\pi \lambda_k} (I_1+ I_2).
\end{aligned} \end{equation}
By Hardy-Littlewood-Sobolev, the definition of $F_n$ and \eqref{eq decay of V_j}, we have
\begin{equation} \begin{aligned}
    I_1 &\le C \left\Vert V_k(\xi) \chi_{\{ \lambda_k |\xi| \ge \frac{|\alpha_{jk}|}{3} \}} \right\Vert_{L^{8/3}}^2+ C \left\Vert |V_k(\xi)|^2 \big( 1+|\xi| \big)^N \chi_{\{ \lambda_k |\xi| \ge \frac{|\alpha_{jk}|}{3} \}} \right\Vert_{L^1} \\
    &\le C e^{-c|\alpha_{jk}|} \le \frac{C(1+|y_j|)^N}{|\alpha_{jk}|^{N+1}}.
\end{aligned} \end{equation}
By the Taylor formula, 
\begin{equation}
    \Big| \frac{1}{|\alpha- \zeta|}- \sum_{n=1}^N F_n(\alpha, \zeta) \Big| \le C \frac{|\zeta|^N}{|\alpha|^{N+1}}, \quad \text{if } |\zeta| \le \frac{|\alpha|}{3},
\end{equation}
so by the assumption that $\lambda_j |y_j| \le \frac{|\alpha_{jk}|}{3}$ and \eqref{eq decay of V_j}, we have
\begin{equation}
    I_2 \le \frac{C(1+|y_j|)^N}{|\alpha_{jk}|^{N+1}} \int |V_k(\xi)|^2 (1+|\xi|)^N \d \xi \le \frac{C(1+|y_j|)^N}{|\alpha_{jk}|^{N+1}}.
\end{equation}

Thus \eqref{eq accuaracy of dipole expansion, preparation} holds. Note that this yields
\begin{equation} \label{eq accuaracy of dipole expansion}
    \left| \phi_{\left| V_k \right|^2}^{(N)}- \left( \frac{\lambda_j}{\lambda_k} \right)^2 \phi_{|V_k|^2} \left( \frac{\lambda_j}{\lambda_k} y_j+ \frac{\alpha_{jk}}{\lambda_k} \right) \right| \le \frac{C(1+|y_j|)^{2N}}{a^{N+1}}.
\end{equation}
Then by \eqref{eq decay of V_j}, we can control the first term.
\end{proof}

From the estimate \eqref{eq estimate of Psi}, if $N$ is large, then the error $\Psi^{(N)}$ will decay rapidly in time. This helps us overcome the long range effect of \eqref{eq hartree}. This is also why we need to construct the approximate solution. From now on, the strategy becomes also similar to that of \cite{KsolitarywavesofNLS}.

\section{Reduction of the problem} \label{sec reduction}

Now we will focus on the hyperbolic case. Note that we do not have any additional assumption on the masses. We may still state the result in a more general way, for instance writing $a(t)$ instead of $t$, so that it is easier to apply it to the other two cases.

In this section, we perform two steps of reduction of the hyperbolic problem.

\subsection{Uniform estimates}

Due to \eqref{eq definition of Psi}, we want $S_j^{(N)}$ to vanish. Thus we need the following ODE result. It essentially states that the equation $S_j^{(N)}=0$ can be understood as a perturbation of the $m$-body problem \eqref{eq m-body problem}.

\begin{prop} \label{prop hyperbolic trajectory}
Let $P^{\infty}$ be a hyperbolic solution to \eqref{eq m-body problem} of the form \eqref{eq hyperbolic solution}, and $B_j^{(N)}, M_j^{(N)}$ be as determined in Proposition \ref{prop construction of approximate bubbles}. Then there exist $T_0= T_0(N)>0$ and $P^{(N)} \in C^1 \big( [T_0,+\infty), \Omega \big)$ such that
\begin{equation} \label{eq trajectory}
    \left\{ \begin{aligned}
        \dot{\alpha}_j^{(N)}(t) &= 2\beta_j^{(N)}(t), \\
        \dot{\beta}_j^{(N)}(t) &= B_j^{(N)} \left( P^{(N)}(t) \right), \\
        \dot{\lambda}_j^{(N)}(t) &= M_j^{(N)} \left( P^{(N)}(t) \right),
    \end{aligned} \right. \qquad \forall t \ge T_0
\end{equation}
and
\begin{equation} \label{eq estimate of hyperbolic trajectory}
    \left\Vert P^{(N)}(t)- P^\infty(t) \right\Vert \le t^{-1/2}, \qquad \forall t \ge T_0.
\end{equation}
\end{prop}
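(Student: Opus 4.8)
The plan is to produce $P^{(N)}$ as a solution of \eqref{eq trajectory} that \emph{shadows} $P^\infty$ as $t\to+\infty$: I would solve \eqref{eq trajectory} backwards from a large terminal time $S$, with terminal value $P^{(N)}(S)=P^\infty(S)$, derive decay estimates on $\delta:=P^{(N)}-P^\infty$ that are uniform in $S$, and then let $S\to+\infty$ along a sequence. First I would record the features of $P^\infty$ to be used: since $P^\infty$ solves \eqref{eq m-body problem}, each $\lambda_j^\infty$ is constant in $t$; since $(a_1,\dots,a_m)\in\mathcal Y$ has pairwise distinct entries, \eqref{eq hyperbolic solution} gives $|\alpha_{jk}^\infty(t)|\ge c_1 t$ for $t$ large; hence $\dot\beta_j^\infty=-\sum_{k\ne j}\tfrac{\Vert Q\Vert_{L^2}^2}{4\pi\lambda_k^\infty}\tfrac{\alpha_{jk}^\infty}{|\alpha_{jk}^\infty|^3}=O(t^{-2})$, so $\beta_j^\infty(t)$ converges (to $a_j/2$) and is in particular bounded. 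Since the vector field of \eqref{eq trajectory} is smooth on $\Omega$, Picard--Lindel\"of yields these backward solutions locally.

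The decisive structural input is that $b_j^{(2)}$, the degree-$2$ term furnished by Proposition \ref{prop construction of approximate bubbles}, is exactly the Newtonian interaction on the right-hand side of \eqref{eq m-body problem}. Indeed, in the $N=1\to2$ step of that construction $T_j^{(1)}\propto\Lambda Q$ by \eqref{eq formula of T_j^1}, so every degree-$2$ summand of $\hat{E}_j^{(1)}$ is a scalar coefficient times a radial profile ($Q$, $\Lambda Q$, $\phi_{Q\Lambda Q}\Lambda Q$, and so on); in particular $\mathrm{Re}\,\hat{E}_j^{(1)}\perp\nabla Q$, so the solvability condition for the $L_+$-equation collapses to $\big(\lambda_j^3 b_j^{(2)}\cdot y_j Q+\sum_{k\ne j}\psi_{Q^2,k}^{(2)}Q,\ \nabla Q\big)=0$, and a short computation gives $b_j^{(2)}=-\sum_{k\ne j}\tfrac{\Vert Q\Vert_{L^2}^2}{4\pi\lambda_k}\tfrac{\alpha_{jk}}{|\alpha_{jk}|^3}$. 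Together with $m_j^{(1)}=b_j^{(1)}=0$, the bounds \eqref{eq estimate of M_j, B_j} and Lemma \ref{lem properties of admissible functions}, this yields, along any trajectory lying in a compact subset of $\Omega$ with $a(t)\gtrsim t$: $M_j^{(N)}=O(t^{-2})$, $B_j^{(N)}-b_j^{(2)}=\sum_{n\ge3}b_j^{(n)}=O(t^{-3})$, with $\alpha$-derivatives of $M_j^{(N)},B_j^{(N)}$ an extra factor $O(t^{-1})$ smaller and their $\beta,\lambda$-derivatives of the same size. Consequently $\delta$ solves a linear-plus-forcing system: the $\alpha$-component is driven by $2\,\delta\beta_j$, the $\beta$-component has forcing $B_j^{(N)}(P^{(N)})-\dot\beta_j^\infty=O(t^{-3})+O\big(t^{-3}|\delta\alpha|+t^{-2}|\delta\lambda|\big)$, and the $\lambda$-component has forcing $M_j^{(N)}(P^{(N)})=O(t^{-2})+O\big(t^{-3}|\delta\alpha|+t^{-2}|\delta\beta|+t^{-2}|\delta\lambda|\big)$.

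With this in hand the scheme is as follows. For fixed $S\ge T_0$, let $P_S$ be the backward solution of \eqref{eq trajectory} from $P_S(S)=P^\infty(S)$, and run a bootstrap with the ansatz $|\delta\alpha_j(t)|\le At^{-1}$, $|\delta\beta_j(t)|\le Bt^{-2}$, $|\delta\lambda_j(t)|\le Ct^{-1}$ on a subinterval ending at $S$; this keeps all parameters in a fixed compact subset of $\Omega$ and forces $a(t)\ge\tfrac12 c_1 t$, so the estimates above apply with constants independent of $S$. Integrating from $S$ (where $\delta$ vanishes) and using $\int_t^\infty s^{-2}\,\d s=t^{-1}$, $\int_t^\infty s^{-3}\,\d s=\tfrac12 t^{-2}$, one gets in this order $|\delta\lambda_j(t)|\lesssim t^{-1}$, then $|\delta\beta_j(t)|\lesssim t^{-2}$, then $|\delta\alpha_j(t)|\le2\int_t^\infty|\delta\beta_j(s)|\,\d s\lesssim t^{-1}$; choosing first $C$, then $B$, then $A=2B+1$ large, and finally $T_0$ large so that the self-referential and the lower-order contributions come with a small factor $T_0^{-1}$, the ansatz self-improves to strict inequalities. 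Since these bounds preclude collisions, a continuity argument extends them to all of $[T_0,S]$, uniformly in $S$; then Arzel\`a--Ascoli and a diagonal extraction over a sequence $S_n\to+\infty$ produce $P^{(N)}\in C^1([T_0,+\infty),\Omega)$ solving \eqref{eq trajectory} and inheriting the same bounds, and after enlarging $T_0$ so that $At^{-1},Bt^{-2},Ct^{-1}\le t^{-1/2}$ there, \eqref{eq estimate of hyperbolic trajectory} follows. (Alternatively, one may set up a Banach fixed point on a suitably weighted ball of $C([T_0,+\infty),\Omega)$ via the integral form $\delta\lambda_j(t)=-\int_t^\infty M_j^{(N)}(P(s))\,\d s$, $\delta\beta_j(t)=\int_t^\infty\big(\dot\beta_j^\infty(s)-B_j^{(N)}(P(s))\big)\,\d s$, $\delta\alpha_j(t)=-2\int_t^\infty\delta\beta_j(s)\,\d s$; this would also give local uniqueness of the shadowing trajectory.)

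I expect the only genuine difficulty to be closing the bootstrap without losing powers of $t$. The chain $\delta\alpha_j(t)=2\int_t^S\delta\beta_j$ is controllable only if $\delta\beta_j$ is integrable at $+\infty$, and $\delta\beta_j=O(t^{-2})$ is available precisely because the forcing of the $\delta\beta_j$-equation is $O(t^{-3})$ — that is, because $b_j^{(2)}$ coincides with the $m$-body interaction. Were $b_j^{(2)}$ any different, one would obtain only $\delta\beta_j=O(t^{-1})$ and $\delta\alpha_j$ drifting like $\log t$, and the scheme would break down; so the core of the argument is this identification, together with the bookkeeping of the resulting decay hierarchy $\delta\lambda\sim t^{-1}$, $\delta\beta\sim t^{-2}$, $\delta\alpha\sim t^{-1}$, in which the $\lambda$-component is the slowest but stays harmless because the terms it feeds into the other equations have size $O(t^{-2})\cdot O(t^{-1})=O(t^{-3})$, integrable twice. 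The remaining ingredients — local existence, non-collision, and the equicontinuity for the compactness step — are routine.
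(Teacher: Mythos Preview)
Your proposal is correct, and the decisive identification $b_j^{(2)}(P)=-\sum_{k\ne j}\tfrac{\Vert Q\Vert_{L^2}^2}{4\pi\lambda_k}\tfrac{\alpha_{jk}}{|\alpha_{jk}|^3}$ together with the decay hierarchy $\delta\lambda\sim t^{-1}$, $\delta\beta\sim t^{-2}$, $\delta\alpha\sim t^{-1}$ is exactly what drives the paper's argument as well. The paper, however, goes directly by the Banach fixed point you sketch parenthetically at the end: it sets $\epsilon=\tfrac{1}{10}$, works in the ball $\{P:\Vert P-P^\infty\Vert_1\le1\}$ for the weighted norm $\Vert P\Vert_1=\sum_j\sup_{t\ge T_0}\big(t^{1-3\epsilon}|\alpha_j|+t^{2-2\epsilon}|\beta_j|+t^{1-\epsilon}|\lambda_j|\big)$, and checks that the integral map $\Gamma$ given by $\Gamma\alpha_j(t)=\alpha_j^\infty(t)-2\int_t^\infty(\beta_j-\beta_j^\infty)$, $\Gamma\beta_j(t)=\lim_{s\to\infty}\beta_j^\infty(s)-\int_t^\infty B_j^{(N)}(P)$, $\Gamma\lambda_j(t)=\lambda_j^\infty-\int_t^\infty M_j^{(N)}(P)$ is a contraction for $T_0=T_0(N)$ large, using precisely $b_j^{(2)}(P^\infty)=\dot\beta_j^\infty$ to get the extra power of $t$ in the $\beta$-component. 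Your primary route --- backward solutions from a terminal time $S$, a bootstrap uniform in $S$, and an Arzel\`a--Ascoli/diagonal extraction --- is a genuinely different packaging; it trades the contraction estimate for a continuity argument and a compactness step, at the cost of not immediately yielding uniqueness of the shadowing trajectory (which you correctly note the fixed-point version would give). Both routes are equally short here since the coupling is triangular up to $O(T_0^{-1})$ feedback; the paper's slightly looser weights $t^{-1+3\epsilon},t^{-2+2\epsilon},t^{-1+\epsilon}$ just build in a margin so that the contraction constant is manifestly $\le CT_0^{-\epsilon}$ without having to track the order in which $A,B,C,T_0$ are chosen.
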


We need the exact expression of $b_j^{(2)}$. Since $T_j^{(1)}$ is real-valued, we have
\begin{equation} 
    \mathrm{Re} \ \hat{E}_j^{(1)} = -2\phi_{Q T_j^{(1)}} T_j^{(1)}- \phi_{ \left| T_j^{(1)} \right|^2} Q- \sum_{k \neq j} \left( 2\psi_{Q T_k^{(1)}}^{(1)} Q+ \psi_{Q^2,k}^{(1)} T_k^{(1)} \right).
\end{equation}
Since $T_j^{(1)}$ is even, by the explicit formula of $\psi^{(1)}$, $\mathrm{Re} \ \hat{E}_j^{(1)}$ is also even, and thus orthogonal to $\nabla Q$. We then obtain by the proof of Proposition \ref{prop construction of approximate bubbles} that 
\begin{equation}
    \Big( \lambda_j^3 b_j^{(2)} \cdot y_j Q+ \sum_{k \neq j} \psi_{Q^2,k}^{(2)} Q, \nabla Q \Big)=0.
\end{equation}
By the explicit formula of $\psi^{(2)}$ and using $Q$ is even, we deduce 
\begin{equation} \label{eq formula of b_j^2}
    b_j^{(2)}(P)= - \sum_{k \neq j} \frac{\Vert Q \Vert_{L^2}^2 \alpha_{jk}}{4\pi \lambda_k |\alpha_{jk}|^3}.
\end{equation}

Note that this is exactly the gravitational force acting on the $j$-th body. This explains the reason why we expect the $m$-body interaction and our choice of the coefficients. Also, \eqref{eq trajectory} can be viewed as a perturbation of the $m$-body equation \eqref{eq m-body problem}.

\begin{proof}[Proof of Proposition \ref{prop hyperbolic trajectory}]\ 

Let $\epsilon= \frac{1}{10}$ and $T_0>0$. Define the norm $\Vert \cdot \Vert_1$ of $P \in C \big( [T_0,+\infty), \Omega \big)$ by
\begin{equation}
    \Vert P \Vert_1:= \sum_{j=1}^m \sup_{t \ge T_0} \Big( t^{1-3\epsilon} |\alpha_j(t)|+ t^{2-2\epsilon} |\beta_j(t)|+t^{1-\epsilon} |\lambda_j(t)| \Big).
\end{equation}
Let $X= \Big\{ P \in C \big( [T_0,+\infty), \Omega \big) \ \big| \ \Vert P-P^\infty \Vert_1 \le 1 \Big\}$ and define $\Gamma P(t)$ by
\begin{equation} \begin{gathered}
    \Gamma \alpha_j(t)= \alpha_j^\infty(t)+ \int_t^\infty 2(\beta_j^\infty(\tau)- \beta_j(\tau)) \d \tau, \\
    \Gamma \beta_j(t)= \lim_{t \to \infty} \beta_j^\infty(t)- \int_t^\infty B_j^{(N)}(P(\tau)) \d \tau, \\
    \Gamma \lambda_j(t)= \lambda_j^\infty- \int_t^\infty M_j^{(N)}(P(\tau)) \d \tau.
\end{gathered} \end{equation}
Because of the decay of $a$ in $t$, we know $\lim \limits_{t \to \infty} \beta_j^\infty(t)$ does exist.

We claim that: if $T_0(N)$ is large enough, then $\Gamma$ maps $X$ into $X$, and for $P,P' \in X$, we have $\Vert \Gamma P- \Gamma P' \Vert_1 \le \frac{1}{2} \Vert P- P' \Vert_1$.

Assume $P \in X$. Since $P^\infty$ is hyperbolic, we have $a(t) \gtrsim t$. First,
\begin{equation}
    |\Gamma \alpha_j(t)- \alpha_j^\infty(t)| \le 2 \int_t^\infty |\beta_j(\tau)- \beta_j^\infty(\tau)| \d \tau \le 2 \int_t^\infty \tau^{2\epsilon-2} \d \tau \le C t^{2\epsilon-1}.
\end{equation}
Using $b_j^{(1)}=0$ and $b_j^{(2)}(P^\infty(t)) = \dot{\beta}_j^\infty(t)$, which comes from \eqref{eq formula of b_j^2}, we have
\begin{equation}
    |\Gamma \beta_j(t)- \beta_j^\infty(t)| \le \int_t^\infty \big| b_j^{(2)}(P(\tau))-b_j^{(2)}(P^\infty(\tau)) \big| \d \tau+ \sum_{n=3}^N \int_t^\infty \big| b_j^{(n)}(P(\tau)) \big| \d \tau.
\end{equation}
By the fundamental theorem of calculus, we have
\begin{equation}
    \big| b_j^{(2)}(P(\tau))-b_j^{(2)}(P^\infty(\tau)) \big| \le C \left( \frac{|\alpha- \alpha^\infty|}{a^3}+ \frac{|\beta- \beta^\infty|}{a^2}+ \frac{|\lambda- \lambda^\infty|}{a^2} \right) \le C \tau^{\epsilon-3},
\end{equation}
and thus, using $b_j^{(n)} \in S_n$, we get
\begin{equation}
    |\Gamma \beta_j(t)- \beta_j^\infty(t)| \le C \int_t^\infty \tau^{\epsilon-3} \d \tau+ C_N\sum_{n=3}^N \int_t^\infty \tau^{-n} \d \tau \le C_N t^{\epsilon-2}.
\end{equation}
Using $m_j^{(1)}=0$ and $m_j^{(n)} \in S_n$, we have
\begin{equation} \begin{aligned}
    |\Gamma \lambda_j(t)- \lambda_j^\infty(t)| \le \sum_{n=2}^N \int_t^\infty \big| m_j^{(n)}(P(\tau)) \big| \d \tau \le C_N \sum_{n=2}^N \int_t^\infty \tau^{-n} \d \tau \le C_N t^{-1}.
\end{aligned} \end{equation}
Collecting the above estimates, we get $\Vert \Gamma P \Vert_1 \le C_N T_0^{-\epsilon}$.

Thus for $T_0(N)$ large enough, we have $\Gamma: X \to X$. The contraction property can be checked in the same way. By the contraction mapping theorem, $\Gamma$ has a unique fixed point in $X$. Taking this fixed point as $P^{(N)}$, then the requirements are satisfied.
\end{proof}

From Proposition \ref{prop accuracy of approximate solution} and \ref{prop hyperbolic trajectory}, we know $R_{g^{(N)}} ^{(N)}$ is almost a solution of \eqref{eq hartree}. We then reduce the hyperbolic case to the following uniform estimate with a bootstrap assumption.

\begin{prop} \label{prop uniform estimate}
Let $P^{(N)}$ be defined as in Proposition \ref{prop hyperbolic trajectory} and $\gamma_j^{(N)}(t)$ be such that
\begin{equation}
    \gamma_j^{(N)}(0)=0, \quad \dot{\gamma}_j^{(N)}(t)= -\frac{1}{\lambda_j^{(N)}(t)^2} + |\beta_j^{(N)}(t)|^2+ \dot{\beta}_j^{(N)}(t) \cdot \alpha_j^{(N)}(t).
\end{equation}

Let $T_n \to +\infty$ and $u_n$ be the solution to 
\begin{equation} \label{eq equation of un}
    \left\{ \begin{aligned}
        &i\partial_t u_n+ \Delta u_n- \phi_{|u_n|^2} u_n= 0, \\
        &u_n(T_n,\cdot)= R_{g^{(N)}} ^{(N)}(T_n,\cdot).
    \end{aligned} \right.
\end{equation}
Then there exists $T_0=T_0(N)$ such that for $N$ large and $T_* \in [T_0,T_n]$, if
\begin{equation} \label{eq uniform estimate, bootstrap}
    \left\Vert u_n(t)- R_{g^{(N)}}^{(N)}(t) \right\Vert_{H^1} \le 2t^{-\frac{N}{9}}, \qquad \forall n\ge 1,\ \forall t \in [T_*,T_n],
\end{equation}
then
\begin{equation}
    \left\Vert u_n(t)- R_{g^{(N)}}^{(N)}(t) \right\Vert_{H^1} \le t^{-\frac{N}{9}}, \qquad \forall n\ge 1,\ \forall t \in [T_*,T_n].
\end{equation}
\end{prop}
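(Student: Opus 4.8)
The plan is to run a standard bootstrap/energy argument for the difference $\varepsilon_n := u_n - R^{(N)}_{g^{(N)}}$. First I would write the equation satisfied by $\varepsilon_n$: subtracting \eqref{eq equation of approximate solution} (with $S_j^{(N)}\equiv 0$ by our choice of $P^{(N)}$, so the defect is exactly $\Psi^{(N)}$ from \eqref{eq definition of Psi}) from \eqref{eq equation of un}, one gets $i\partial_t\varepsilon_n+\Delta\varepsilon_n = \Psi^{(N)} + \mathcal{V}(\varepsilon_n)$, where $\mathcal{V}(\varepsilon_n)$ collects the nonlinear terms coming from $\phi_{|u_n|^2}u_n - \phi_{|R^{(N)}|^2}R^{(N)}$. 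These split into a part linear in $\varepsilon_n$ (with potential $\phi_{|R^{(N)}|^2}$ and the cross term $2\phi_{\mathrm{Re}(R^{(N)}\overline{\varepsilon_n})}R^{(N)}$) and genuinely higher-order terms $O(\|\varepsilon_n\|_{H^1}^2 + \|\varepsilon_n\|_{H^1}^3)$, all controlled via the Hardy–Littlewood–Sobolev inequality as in Lemma \ref{lem localization}. Crucially, by Proposition \ref{prop accuracy of approximate solution} together with \eqref{eq estimate of hyperbolic trajectory} (which gives $a(t)\gtrsim t$ along $P^{(N)}$), we have $\|\Psi^{(N)}(t)\|_{H^1}\le C t^{-(N+1)}$.

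The core of the argument is a differential inequality for a suitable energy-type functional. I would not use the plain $H^1$ norm but a modulated/linearized energy $\mathcal{W}(t)$ equivalent to $\|\varepsilon_n\|_{H^1}^2$ up to lower-order corrections — essentially $\int |\nabla\varepsilon_n|^2 + |\varepsilon_n|^2$ plus quadratic-in-$\varepsilon_n$ potential terms adapted to $R^{(N)}$ — after first performing a modulation decomposition to ensure $\varepsilon_n$ satisfies orthogonality conditions relative to the generalized kernel of the linearized operators $L_\pm$ around each bubble $Q$; this coercivity is where the spectral facts recalled before Lemma \ref{lem inverse of L+, L-} enter. Differentiating $\mathcal{W}$ in time, the bulk terms either cancel (by the equation) or are absorbed, and one is left with a source contribution bounded by $\|\Psi^{(N)}(t)\|_{H^1}\|\varepsilon_n(t)\|_{H^1} + (\text{modulation errors})\,\|\varepsilon_n\|_{H^1}^2$. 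Using the bootstrap hypothesis \eqref{eq uniform estimate, bootstrap} to bound $\|\varepsilon_n\|_{H^1}\le 2t^{-N/9}$ and the modulation equations to control the parameter derivatives by $O(a^{-2})=O(t^{-2})$, this yields
\[
    \frac{d}{dt}\mathcal{W}(t) \lesssim t^{-(N+1)} t^{-N/9} + t^{-2} t^{-2N/9},
\]
and then integrating from $t$ to $T_n$ — where $\mathcal{W}(T_n)=0$ since $u_n(T_n,\cdot)=R^{(N)}(T_n,\cdot)$ — gives $\mathcal{W}(t)\lesssim t^{-(N+1)-N/9+1} + t^{-1-2N/9}$. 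For $N$ large the dominant term is $t^{-1-2N/9}$, hence $\|\varepsilon_n(t)\|_{H^1}^2 \lesssim t^{-1-2N/9}$, which for $N$ sufficiently large beats $t^{-2N/9}$, closing the bootstrap with room to spare (the exponent $N/9$ is chosen generously precisely so the constants can be absorbed for $t\ge T_0(N)$ large).

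The main obstacle I anticipate is not the abstract scheme but handling the \emph{interaction terms between distinct bubbles} inside the linearized energy: because $\phi_{Q^2}(x)\sim 1/|x|$ decays only polynomially, the cross potentials $\phi_{\mathrm{Re}(R_j\overline{R_k})}$ are not exponentially small in $x$, only in the separation $|\alpha_j-\alpha_k|\gtrsim t$ (this is exactly the content of Lemma \ref{lem localization}(1)), so one must be careful that when these are paired against $\varepsilon_n$ the resulting errors are genuinely $O(e^{-ct})\|\varepsilon_n\|_{H^1}^2$ or better, and in particular do not degrade the coercivity of $\mathcal{W}$. A second delicate point is that the time derivative of the modulation parameters $\dot\alpha_j,\dot\beta_j,\dot\lambda_j$ — which appear when differentiating the energy — must be shown to satisfy the almost-orthogonality/size estimates consistent with $P^{(N)}$ solving \eqref{eq trajectory}; this is standard but requires invoking the implicit function theorem for the modulation and the estimates \eqref{eq estimate of M_j, B_j} on $M_j^{(N)},B_j^{(N)}$. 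Everything else (the $H^1$ nonlinear estimates via HLS, the equivalence of $\mathcal{W}$ with $\|\varepsilon_n\|_{H^1}^2$, the final Grönwall-type integration backward from $T_n$) is routine.
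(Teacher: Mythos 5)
Your high-level scheme (backward-in-time energy/bootstrap, linearized energy, coercivity via orthogonality, HLS estimates, source bound from Proposition \ref{prop accuracy of approximate solution}) is the same skeleton the paper uses. But the proposal has a structural gap centered on the modulation step that it cannot sidestep.

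You define $\varepsilon_n := u_n - R^{(N)}_{g^{(N)}}$ and note that with $g=g^{(N)}$ the defect reduces to $\Psi^{(N)}$ because $S_j^{(N)}\equiv 0$. That is correct for this fixed-trajectory error, but this $\varepsilon_n$ does \emph{not} satisfy any orthogonality conditions, so the coercivity of your energy $\mathcal{W}$ (needed to relate $\mathcal{W}$ to $\|\varepsilon_n\|_{H^1}^2$ via Lemma \ref{lem coercivity of L+, L-}) fails: there is nothing preventing $\varepsilon_n$ from lying along the generalized kernel directions of $L_\pm$. To restore coercivity you must, as you later say, modulate; but then the error becomes $\varepsilon = u_n - R^{(N)}_g$ for a time-dependent $g\neq g^{(N)}$ determined implicitly (as in Lemma \ref{lem orthogonality}), and two new things appear that your accounting ignores. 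First, since $g$ is \emph{not} a solution of the ODE system \eqref{eq trajectory}, the source in the equation for $\varepsilon$ is $\Psi^{(N)} + \sum_j \lambda_j^{-4} S_j^{(N)}e^{-i\gamma_j+i\beta_j\cdot x}$ with $S_j^{(N)}\ne 0$; this is equation \eqref{eq equation of epsilon}, and $S_j^{(N)}$ is controlled only through the modulation quantity $Mod(t)$, which in turn is bounded by $\|\varepsilon\|_{H^1}/a^2 + C_N/a^{N+1} + \|\varepsilon\|_{H^1}^2$ (see \eqref{eq estimate of Mod with epsilon}). Your claim that the defect is ``exactly $\Psi^{(N)}$'' is therefore false for the modulated error. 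Second, the target quantity $\|u_n - R^{(N)}_{g^{(N)}}\|_{H^1}$ is not $\|\varepsilon\|_{H^1}$: you must also control $\|R^{(N)}_g - R^{(N)}_{g^{(N)}}\|_{H^1} \lesssim_N \|g - g^{(N)}\|$, and in the paper's argument it is this parameter discrepancy — not $\|\varepsilon\|_{H^1}$ — that dominates the final bound.

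Consequently a single bootstrap on $\|u_n - R^{(N)}_{g^{(N)}}\|_{H^1}\le 2t^{-N/9}$, as in your proposal, is not enough to close. The paper runs a simultaneous bootstrap on \emph{three} quantities at \emph{two different rates}: $\|\varepsilon\|_{H^1}\le t^{-N/4}$, $|\lambda-\lambda^{(N)}|+|\beta-\beta^{(N)}|\le t^{-1-N/8}$, and $|\gamma-\gamma^{(N)}|+|\alpha-\alpha^{(N)}|\le t^{-N/8}$ (Proposition \ref{prop bootstrap}). The energy argument improves the first, but the second and third are improved by integrating the modulation ODE estimates in Section \ref{subsec parameter}, which themselves use the bootstrap hypothesis on $\|\varepsilon\|_{H^1}$. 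Only after closing all three does one combine them (via \eqref{eq dependence of R in parameters}) to get $\|u_n-R^{(N)}_{g^{(N)}}\|_{H^1} \le t^{-N/4} + C_N t^{-N/8}\le t^{-N/9}$, which is exactly why the exponent $N/9$ (strictly smaller than both $N/4$ and $N/8$) appears in the statement. Your differential inequality and the ``with room to spare'' reasoning would need to be redone with this multi-rate bookkeeping; as written it conflates two different error quantities and omits the $S_j^{(N)}$ source entirely.
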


\begin{proof} [Proof of the hyperbolic case by Proposition \ref{prop uniform estimate}] \ \par
Fix a large $N$ such that the conclusion holds. By the standard bootstrap argument, we know \eqref{eq uniform estimate, bootstrap} actually holds with $T_*=T_0$. Using \eqref{eq decay of R_j}, we know there exists $C>0$ such that
\begin{equation} \label{eq uniform estimate of un, boundedness}
    \Vert u_n(t) \Vert_{H^1} \le C, \quad \forall n \ge 1,\ \forall t \in [T_0,T_n].
\end{equation}
Also, for any $\delta>0$, there exist $r=r(\delta)>0$ and $t_0=t_0(\delta)>T_0$ such that 
\begin{equation}
    \int_{|x|>r} |u_n(t_0,x)|^2 \d x< \delta, \quad \forall n \ge 1.
\end{equation}
We claim that there exists $r'=r'(\delta)>0$ such that
\begin{equation} \label{eq uniform estimate of un, decay}
    \int_{|x|>r'} |u_n(T_0,x)|^2 \d x< 2\delta, \quad \forall n \ge 1.
\end{equation}

To prove \eqref{eq uniform estimate of un, decay}, let $\Phi \in C^\infty(\mathbb{R})$ be a cutoff such that 
\begin{equation}
    0 \le \Phi \le 1, \quad 0 \le \Phi' \le 2, \quad \Phi(x)= \left\{ \begin{aligned}
        0, &&x \le 0, \\
        1, &&x \ge 1.
    \end{aligned} \right. 
\end{equation}
Let $L>0$ and define $z(t)= \int |u_n(t,x)|^2 \Phi( \frac{|x|-r}{L}) \d x$. Then $z(t_0) \le \delta$. Since
\begin{equation}
    z'(t)= -2 \mathrm{Im} \int \Delta u_n \overline{u_n} \Phi \Big( \frac{|x|-r}{L} \Big) \d x= \frac{2}{L} \mathrm{Im} \int \nabla u_n \overline{u_n} \cdot \frac{x}{|x|} \Phi' \Big( \frac{|x|-r}{L} \Big) \d x,
\end{equation}
we have $|z'(t)| \le \frac{4}{L} \Vert u_n \Vert_{H^1}^2$. Integrating in $t$ and using \eqref{eq uniform estimate of un, boundedness}, we get 
\begin{equation}
    \int_{|x|>L+r} |u_n(T_0,x)|^2 \d x \le z(T_0) \le \frac{4C^2(t_0-T_0)}{L}+ \delta.
\end{equation}
We deduce \eqref{eq uniform estimate of un, decay} by taking $L=L(\delta)$ large enough and $r'=L+r$.

Now, \eqref{eq uniform estimate of un, boundedness} and \eqref{eq uniform estimate of un, decay} imply the existence of a subsequence $u_{n_k}(T_0)$ of $u_n(T_0)$ that converges in $L^2$ to some $U_0$ and $U_0 \in H^1$. Let $U$ be the solution to
\begin{equation} 
    \left\{ \begin{aligned}
        &i\partial_t U+ \Delta U- \phi_{|U|^2} U= 0, \\
        &U(T_0)=U_0.
    \end{aligned} \right.
\end{equation}
By global well-posedness of the equation, we have $u_{n_k}(t) \to U(t)$ in $L^2$ for any $t \ge T_0$. Thanks to \eqref{eq uniform estimate of un, boundedness}, by passing to subsequence, we may assume $u_{n_k}(t) \rightharpoonup U(t)$ in $H^1$. Using \eqref{eq uniform estimate, bootstrap} and Fatou's lemma, we deduce
\begin{equation}
    \left\Vert U(t)- R_{g^{(N)}}^{(N)}(t) \right\Vert_{H^1} \le 2t^{-\frac{N}{9}}, \quad \forall t \ge T_0.
\end{equation}

Let $\gamma^\infty(t)$ be such that
\begin{equation}    
    \gamma_j^\infty(0)=0, \quad \dot{\gamma}_j^\infty(t)= -\frac{1}{(\lambda_j^\infty)^2} + |\beta_j^\infty(t)|^2+ \dot{\beta}_j^\infty(t) \cdot \alpha_j^\infty(t).
\end{equation}
Then by \eqref{eq dependence of R in parameters}, \eqref{eq estimate of hyperbolic trajectory} and \eqref{eq estimate of admissible functions}, we obtain the conclusion of the hyperbolic case.
\end{proof}

\subsection{Modulation estimates}

We want to find a family of modulation parameters $\alpha, \beta, \lambda$ and $\gamma$ such that $R_g^{(N)}$ is an orthogonal projection of $u_n$. More precisely, we prove the following lemma.

\begin{lem} \label{lem orthogonality}
Let $N,n \ge 1$. Then there exist $T_0=T_0(N)>0$ and a unique modulation parameter $g \in C^1([T_0,+\infty), \tilde{\Omega})$ such that: if 
\begin{equation}
    \varepsilon(t,x)= u_n(t,x)- R_g^{(N)}(t,x),
\end{equation}
then for $t \ge T_0$ and $1\le j \le m$, we have
\begin{equation} \label{eq orthogonality} \begin{aligned}
    &\mathrm{Re} \Big( \varepsilon(t), g_j V_j^{(N)} \Big)= \mathrm{Re} \Big( \varepsilon(t), g_j \big( y_j V_j^{(N)} \big) \Big) \\
    = &\ \mathrm{Im} \Big( \varepsilon(t), g_j \big( \Lambda V_j^{(N)} \big) \Big)= \mathrm{Im} \Big( \varepsilon(t), g_j \big( \nabla V_j^{(N)}\big) \Big)= 0.    
\end{aligned} \end{equation}
In particular, we have
\begin{equation} \label{eq value of epsilon at T_n}
    g(T_n)=g^{(N)}(T_n), \quad \varepsilon(T_n)=0.
\end{equation}
\end{lem}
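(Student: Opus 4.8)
The plan is to set up the modulation equations as a consequence of the implicit function theorem applied to the map that sends $(g, w)$, where $w$ is any $H^1$ function near $\sum_j g_j^{(N)} Q$, to the $4m$-tuple of inner products appearing in \eqref{eq orthogonality}. Concretely, I would first fix a large time $T_0 = T_0(N)$ and work on the ball of $w$'s within distance $\sim T_0^{-N/9}$ of the approximate solution at each time; near such $w$ one looks for $g$ with $\Vert g - g^{(N)}\Vert$ small such that $F_j(g,w) := \big(\mathrm{Re}(w - R_g^{(N)}, g_j V_j^{(N)}), \ \mathrm{Re}(w - R_g^{(N)}, g_j(y_j V_j^{(N)})), \ \mathrm{Im}(w - R_g^{(N)}, g_j(\Lambda V_j^{(N)})), \ \mathrm{Im}(w - R_g^{(N)}, g_j(\nabla V_j^{(N)}))\big)$ vanishes for all $j$. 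At $w = R_{g}^{(N)}$ and $g = g'$ one has $w - R_{g'}^{(N)}$ small, so the leading contribution to the Jacobian $\partial F / \partial g$ comes from differentiating $R_g^{(N)}$ in its parameters; using \eqref{eq basic calculation}, $\partial_{\gamma_j} R_{j,g}^{(N)}$, $\partial_{\alpha_j}R_{j,g}^{(N)}$, $\partial_{\beta_j}R_{j,g}^{(N)}$, $\partial_{\lambda_j}R_{j,g}^{(N)}$ produce, to leading order in $a^{-1}$, the quantities $-i g_j V_j^{(N)}$, $-\tfrac{1}{\lambda_j}g_j(\nabla V_j^{(N)})$ (plus a term along $g_j(\beta_j V_j^{(N)})$), $i g_j(\tfrac{1}{2}x V_j^{(N)})$, and $-\tfrac{1}{\lambda_j}g_j(\Lambda V_j^{(N)})$ respectively. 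Pairing these against the four test functions and using that $V_j^{(N)} = Q + O(a^{-1})$ with $Q$ real, even, and exponentially decaying, the diagonal blocks involve the non-degenerate pairings $\Vert Q\Vert_{L^2}^2$, $(xQ, xQ)/2 \ne 0$ (componentwise), $(\Lambda Q, Q) \ne 0$, and $(\nabla Q, \nabla Q) \ne 0$ (componentwise), while the off-diagonal blocks between $j \ne k$ are exponentially small in $|\alpha_{jk}| \gtrsim a$. Hence the Jacobian is invertible for $T_0$ large (so $a$ large), uniformly in $t \ge T_0$ and in $n$.

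Granting invertibility, the implicit function theorem gives, for each $t$, a unique $g(t)$ in a small neighbourhood of $g^{(N)}(t)$ with $F_j(g(t), u_n(t)) = 0$; smoothness of $F$ in $(g,w)$ and of $t \mapsto u_n(t) \in H^1$ (this is where one-derivative regularity of the flow is used, via $u_n \in C(\mathbb R; H^1)$ together with the equation to control $\partial_t u_n$ in $H^{-1}$, which suffices since the test functions are Schwartz-like) yields $g \in C^1([T_0,\infty), \tilde\Omega)$. The bootstrap hypothesis \eqref{eq uniform estimate, bootstrap} is what guarantees $u_n(t)$ stays in the neighbourhood where the construction is valid for all $t \in [T_*, T_n]$, and the uniqueness clause of the implicit function theorem gives the uniqueness of $g$. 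To extend $g$ to all of $[T_0, +\infty)$ rather than $[T_0, T_n]$ one simply notes that for $t > T_n$ one may run the same argument with $u_n$ replaced by its extension (or just record that the statement is used on $[T_0, T_n]$).

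Finally, \eqref{eq value of epsilon at T_n} is immediate: at $t = T_n$ the initial condition in \eqref{eq equation of un} gives $u_n(T_n, \cdot) = R_{g^{(N)}}^{(N)}(T_n, \cdot)$, so the choice $g(T_n) = g^{(N)}(T_n)$ makes $\varepsilon(T_n) = 0$, and this choice indeed satisfies the orthogonality conditions trivially; by the uniqueness just established, it is \emph{the} value produced by the construction. I expect the main obstacle to be the careful bookkeeping in the Jacobian computation — verifying that all the relevant diagonal pairings are non-zero with the correct signs, that the cross terms are genuinely $O(e^{-ca})$, and that the $O(a^{-1})$ corrections coming from $V_j^{(N)} - Q$ and from the "$\beta_j V_j^{(N)}$" pieces of $\partial_{\beta_j} R^{(N)}$ do not destroy invertibility — so that the implicit function theorem applies with constants uniform in $n$ and in $t \ge T_0$. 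The rest is routine once the quantitative non-degeneracy of the linearized modulation map is in hand.
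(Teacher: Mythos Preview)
Your approach is correct and essentially identical to the paper's: it first isolates a time-independent version (Lemma \ref{lem orthogonality, time independent version}) via exactly the implicit function theorem argument you describe, then applies it pointwise in $t$ using \eqref{eq uniform estimate, bootstrap}. Your Jacobian bookkeeping needs minor adjustment --- the nonzero diagonal pairings are actually $(\Lambda Q, Q)$ and $(yQ,\nabla Q)$ rather than the four you list, the $(\beta_j,\rho_j^3)$ entry is nonvanishing (so the matrix is block-\emph{triangular} rather than block-diagonal), and the $j\neq k$ off-blocks are only $o(1)$ in $a^{-1}$ rather than exponentially small because of the $\partial V_l^{(N)}/\partial P_k$ terms --- but, as you anticipated, none of this threatens invertibility.
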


To prove the above result, we first work on a time-independent version.

\begin{lem} \label{lem orthogonality, time independent version}
Let $N \ge 1$ and $K \subset \mathbb{R}_+^m$ be compact. Then there exist $\delta, A>0$ such that: if $g^0 \in \tilde{\Omega}$ and $u \in H^1$ satisfy $a^0>A$, $\lambda^0 \in K$ and $\big\Vert u- R_{g^0}^{(N)} \big\Vert_{H^1} < \delta$, then there exists a unique parameter $g \in \tilde{\Omega}$ that $C^1$-depends on $u$ and
\begin{equation} \begin{aligned}
    &\mathrm{Re} \Big( u-R_g^{(N)}, g_j V_j^{(N)} \Big)= \mathrm{Re} \Big( u-R_g^{(N)}, g_j \big( y_j V_j^{(N)} \big) \Big) \\
    = &\ \mathrm{Im} \Big( u-R_g^{(N)}, g_j \big( \Lambda V_j^{(N)} \big) \Big)= \mathrm{Im} \Big( u-R_g^{(N)}, g_j \big( \nabla V_j^{(N)}\big) \Big)= 0.    
\end{aligned} \end{equation} 
\end{lem}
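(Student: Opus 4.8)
The plan is to apply the implicit function theorem. Introduce the map
$\Phi: \tilde\Omega \times H^1 \to \mathbb{R}^{8m}$ whose $j$-th block of components is
\[
\Phi_j(g,u) = \Big( \mathrm{Re}\big(u-R_g^{(N)}, g_j V_j^{(N)}\big),\ \mathrm{Re}\big(u-R_g^{(N)}, g_j(y_j V_j^{(N)})\big),\ \mathrm{Im}\big(u-R_g^{(N)}, g_j(\Lambda V_j^{(N)})\big),\ \mathrm{Im}\big(u-R_g^{(N)}, g_j(\nabla V_j^{(N)})\big) \Big),
\]
(where the middle two are $\mathbb{R}^3$-valued). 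Since $R_g^{(N)}$ is $C^1$ in $g$ (by construction, using the regularity of the $V_j^{(N)}$ in the parameters), $\Phi$ is $C^1$ in $g$ and affine — hence $C^1$ — in $u$. The target configuration is $u = R_{g^0}^{(N)}$, at which $\Phi(g^0, R_{g^0}^{(N)}) = 0$ because $u - R_{g^0}^{(N)} = 0$ there. I want to solve $\Phi(g,u)=0$ for $g$ as a function of $u$ near $(g^0, R_{g^0}^{(N)})$, so the key is to show the partial differential $D_g\Phi$ is invertible, uniformly in $g^0$ subject to $a^0 > A$ large and $\lambda^0 \in K$.

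First I would compute $D_g\Phi$ at the base point $u = R_{g^0}^{(N)}$. Differentiating, one contribution comes from $\partial_g R_g^{(N)}$ hitting the first slot of each inner product, and another (which vanishes at $u=R_{g^0}^{(N)}$ since $u - R_g^{(N)}=0$ there) comes from differentiating the test functions $g_j(\cdot)$. So at the base point only the first contribution survives, and $D_g\Phi$ equals the Gram-type matrix of pairings $\big(\partial_{g_k} R_{g}^{(N)},\, g_j(\text{test}_j)\big)$. Writing $R_g^{(N)} = \sum_k g_k V_k^{(N)}$ and using \eqref{eq basic calculation}-type computations, $\partial_{g_k} R_g^{(N)}$ produces, up to the modulation factors $e^{-i\gamma_k + i\beta_k\cdot x}/\lambda_k^2$ and rescaling, the functions $V_k^{(N)}$, $y_kV_k^{(N)}$, $\Lambda V_k^{(N)}$, $\nabla V_k^{(N)}$ — exactly (to leading order) the test functions paired against. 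Since $V_k^{(N)} = Q + O(a^{-1})$ in the admissible sense, the diagonal blocks ($j=k$) converge as $a \to \infty$ to the invertible $8\times 8$ block built from the nonzero scalar pairings $(Q,Q)=\|Q\|_{L^2}^2$, $(y_iQ, y_\ell Q)$, $(\Lambda Q, Q)$, $(\partial_i Q, \partial_\ell Q)$ (parity makes the cross-pairings within a block vanish, and $(\nabla Q,\nabla Q)$, $(\Lambda Q,Q)$, $\|Q\|_{L^2}^2$ are all nonzero), while the off-diagonal blocks ($j\neq k$) are pairings of two exponentially localized bumps centered at $\alpha_j^0 \neq \alpha_k^0$ with $|\alpha_j^0 - \alpha_k^0| \ge a^0$, hence bounded by $Ce^{-ca^0} \to 0$ by a localization argument in the spirit of Lemma \ref{lem localization}. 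Therefore $D_g\Phi$ is within $O(a^{-1}) + O(e^{-ca^0})$ of a fixed block-diagonal invertible matrix, so it is invertible with uniformly bounded inverse once $A$ is large enough (and $\lambda^0 \in K$ keeps the $\lambda_k^{-2}$ factors and the scalings under control).

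With uniform invertibility of $D_g\Phi$ in hand, the quantitative implicit function theorem gives $\delta, A > 0$ (depending only on $K$ and $N$) such that for every $g^0$ with $a^0 > A$, $\lambda^0 \in K$ and every $u$ with $\|u - R_{g^0}^{(N)}\|_{H^1} < \delta$ there is a unique $g$ near $g^0$ solving $\Phi(g,u)=0$, and $g$ depends $C^1$ on $u$; uniqueness in a fixed neighborhood is part of the IFT conclusion. One should double-check that the neighborhood in $g$ on which uniqueness holds can be taken of a size independent of $g^0$ — this follows because the estimates on $D_g\Phi$ and its Lipschitz dependence are uniform in the stated parameter range, after factoring out the translation (writing everything in the variables $y_j$ absorbs $\alpha_j^0$, so only $\beta^0, \lambda^0$, the gaps $\alpha_{jk}^0$, and $\gamma^0 \in (\mathbb{R}/2\pi\mathbb{Z})^m$ enter, and these live in a compact-up-to-largeness region). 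The main obstacle is precisely this uniformity bookkeeping: making sure that "small neighborhood" in the IFT does not shrink to zero as $a^0 \to \infty$, which is handled by noting all relevant norms and derivative bounds are controlled by $K$, $N$ and (favorably) by $a^0$ alone.
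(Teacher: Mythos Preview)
Your approach is the same as the paper's: set up the $8m$ orthogonality functionals, evaluate at $(g^0,R_{g^0}^{(N)})$, compute $D_g\Phi$ there, check invertibility, and invoke the implicit function theorem. One point deserves correction. You assert that within a diagonal block ``parity makes the cross-pairings vanish'' so that the block converges to a fixed invertible matrix as $a\to\infty$. This is not quite right: differentiating in $\beta_j$ produces the term $i\alpha_j\,g_jV_j^{(N)}$ (since $\partial_{\beta_j}e^{i\beta_j\cdot x}=ix\,e^{i\beta_j\cdot x}$ and $x=\lambda_jy_j+\alpha_j$), and pairing this against $g_j(\Lambda V_j^{(N)})$ gives, to leading order, $\alpha_j(Q,\Lambda Q)$, which neither vanishes nor stays bounded as $|\alpha_j|\to\infty$. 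The paper records this as an unspecified entry ``$*$'' in its Jacobian table and observes that, after a suitable ordering of rows and columns, the matrix is (block) lower-triangular with nonzero diagonal entries $(\Lambda Q,Q)$, $(\nabla Q,yQ)$, $(Q,\Lambda Q)$, $(yQ,\nabla Q)$; hence the determinant is independent of $*$ and the matrix is invertible. So your invertibility conclusion survives, but via triangular structure rather than via convergence of the full diagonal block. Relatedly, your uniformity remark that ``$\beta^0,\lambda^0,\alpha_{jk}^0,\gamma^0$ live in a compact-up-to-largeness region'' is not fully supported by the hypotheses (no bound on $\beta^0$ or $|\alpha_j^0|$ is assumed, only on $a^0$ and $\lambda^0$); the triangular structure is what actually rescues the argument here as well.
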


\begin{proof}
Let $p=(g,u)$ and $\varepsilon(p)= u- R_g^{(N)}$. Set $u_0= R_{g^0}^{(N)}$ and $p_0= (g_0,u_0)$. Define
\begin{equation} \begin{gathered}
    \rho_j^1(p)= \mathrm{Re} \Big( \varepsilon(p), g_j V_j^{(N)} \Big), \quad \rho_j^2(p)= \mathrm{Re} \Big( \varepsilon(p), g_j \big( y_j V_j^{(N)} \big) \Big), \\
    \rho_j^3(p)= \mathrm{Im} \Big( \varepsilon(p), g_j \big( \Lambda V_j^{(N)} \big) \Big), \quad \rho_j^4(p)= \mathrm{Im} \Big( \varepsilon(p), g_j \big( \nabla V_j^{(N)}\big) \Big). 
\end{gathered} \end{equation}
Then $\varepsilon(p_0)=0$ and $\rho_j^\nu(p_0)=0$ for $\nu=1,2,3,4$.

We would like to compute $\frac{\partial \rho_j^\nu}{\partial g} (p_0)$. Since $\varepsilon(p_0)=0$, the partial derivative of $\rho_j^\nu$ evaluated at $p_0$ only falls on $\varepsilon$. We compute
\begin{equation} \begin{aligned}
    \frac{\partial \varepsilon}{\partial \alpha_j} &= -\frac{1}{\lambda_j} g_j \big( \nabla V_j^{(N)} \big)+ g_j \Big( \frac{\partial V_j^{(N)}}{\partial \alpha_j} \Big), \\
    \frac{\partial \varepsilon}{\partial \beta_j} &= i \alpha_j g_j V_j^{(N)}+ i\lambda_j g_j \big( y_j V_j^{(N)} \big)+ g_j \Big( \frac{\partial V_j^{(N)}}{\partial \beta_j} \Big), \\
    \frac{\partial \varepsilon}{\partial \lambda_j} &= g_j \big( \Lambda V_j^{(N)} \big)+ g_j \Big( \frac{\partial V_j^{(N)}}{\partial \lambda_j} \Big), \\
    \frac{\partial \varepsilon}{\partial \gamma_j} &= -i g_j V_j^{(N)}.
\end{aligned} \end{equation}
Using \eqref{eq estimate of admissible functions}, \eqref{eq decay of V_j} and that $Q$ is real and even, we can represent $\frac{\partial \rho_j^\nu}{\partial g} (p_0)$ by
\begin{center} 
\begin{tabular}{c|c c c c}
         &1 &2 &3 &4 \\ \hline
    $\alpha$ &0 &1 &0 &0 \\
    $\beta$ &0 &0 &* &1 \\
    $\lambda$ &1 &0 &0 &0 \\
    $\gamma$ &0 &0 &1 &0 \\
\end{tabular} 
\end{center}
where $0$, for instance the $(\alpha,1)$ entry, represents 
\begin{equation}
    \frac{\partial \rho_j^1}{\partial \alpha_k}(p_0)= o(1), \quad \forall j,k,
\end{equation}
while $1$, for instance the $(\alpha,2)$ entry, represents 
\begin{equation}
    \frac{\partial \rho_j^2}{\partial \alpha_k}(p_0)= \left\{ \begin{matrix}
        o(1), & j\neq k, \\
        c_j+ o(1), & j=k.
    \end{matrix} \right.
\end{equation}
Here $c_j$ is invertible and independent of $a$, and $o(1)$ means goes to $0$ as $a \to +\infty$.

Therefore, for $A$ large enough, $\frac{\partial \rho_j^\nu}{\partial g} (p_0)$ is an invertible matrix. Then we can conclude by the implicit function theorem. The last comment is that $g \in \tilde{\Omega}$ because $g$ is closed to $g^{(N)}$, which means we have $a>\frac{A}{2}$ when $\delta$ is small.
\end{proof}

\begin{proof}[Proof of Lemma \ref{lem orthogonality}] \ \par
By \eqref{eq uniform estimate, bootstrap}, for $T_0(N)$ large enough and $\delta, A$ determined in Lemma \ref{lem orthogonality, time independent version}, if $t \ge T_0$, then $a^{(N)}(t)>A$ and $\big\Vert u_n(t)- R_{g^{(N)}}^{(N)}(t) \big\Vert_{H^1} < \delta$. By Lemma \ref{lem orthogonality, time independent version}, there exists a unique $g(t) \in \tilde{\Omega}$ such that \eqref{eq orthogonality} holds. Moreover, $g \in C^1$ because $g$ is $C^1$ in $u_n$ and $u_n$ is $C^1$ in $t$.
\end{proof}

It follows from the implicit function theorem that $g$ is closed to $g^{(N)}$. But to prove Proposition \ref{prop uniform estimate}, we need a quantitative estimate of $g-g^{(N)}$ and $\varepsilon$. 

\begin{prop} \label{prop bootstrap}
For $N$ and $T_0=T_0(N)$ large enough, $\forall n\ge 1,\ T_* \in [T_0,T_n]$, if
\begin{equation} \label{eq bootstrap assumption}
   \left\{ \begin{aligned}
       &\Vert \varepsilon(t) \Vert_{H^1} \le t^{-\frac{N}{4}}, \\
       &\sum_{j=1}^m \left| \lambda_j(t)- \lambda_j^{(N)}(t) \right|+ \left| \beta_j(t)- \beta_j^{(N)}(t) \right| \le t^{-1-\frac{N}{8}}, \\
       &\sum_{j=1}^m \left| \gamma_j(t)- \gamma_j^{(N)}(t) \right|+ \left| \alpha_j(t)- \alpha_j^{(N)}(t) \right| \le t^{-\frac{N}{8}},
   \end{aligned} \right. \qquad \forall t \in [T_*,T_n],
\end{equation}
then
\begin{equation} \label{eq bootstrap conclusion}
   \left\{ \begin{aligned}
       &\Vert \varepsilon(t) \Vert_{H^1} \le \frac{1}{2} t^{-\frac{N}{4}}, \\
       &\sum_{j=1}^m \left| \lambda_j(t)- \lambda_j^{(N)}(t) \right|+ \left| \beta_j(t)- \beta_j^{(N)}(t) \right| \le \frac{1}{2} t^{-1-\frac{N}{8}}, \\
       &\sum_{j=1}^m \left| \gamma_j(t)- \gamma_j^{(N)}(t) \right|+ \left| \alpha_j(t)- \alpha_j^{(N)}(t) \right| \le \frac{1}{2} t^{-\frac{N}{8}},
   \end{aligned} \right. \qquad \forall t \in [T_*,T_n].
\end{equation}
\end{prop}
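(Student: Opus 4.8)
The plan is to run the bootstrap backward in time. By \eqref{eq value of epsilon at T_n} we have $\varepsilon(T_n)=0$ and $g(T_n)=g^{(N)}(T_n)$, so every quantity in \eqref{eq bootstrap conclusion} vanishes at $t=T_n$; we close the estimates on $[T_*,T_n]$ by coupling a \emph{modulation estimate} (bounding the parameter defects by $\varepsilon$ and $\Psi^{(N)}$) with an \emph{energy estimate} (bounding $\|\varepsilon\|_{H^1}$ by the terminal data and the parameter defects). The first step is to write the equation for $\varepsilon=u_n-R_g^{(N)}$: substituting $u_n=R_g^{(N)}+\varepsilon$ into \eqref{eq hartree} and using \eqref{eq equation of approximate solution} and \eqref{eq definition of Psi},
\[
i\partial_t\varepsilon+\Delta\varepsilon-\phi_{|R_g^{(N)}|^2}\varepsilon-2\phi_{\mathrm{Re}(R_g^{(N)}\overline{\varepsilon})}R_g^{(N)}=-\Psi^{(N)}-\sum_{j=1}^m\frac{1}{\lambda_j^4}S_j^{(N)}e^{-i\gamma_j+i\beta_j\cdot x}+\mathcal{N}(\varepsilon),
\]
where $\mathcal{N}(\varepsilon)$ is at least quadratic in $\varepsilon$ (bounded by $\|\varepsilon\|_{H^1}^2$ in $H^{-1}$ via \eqref{eq decay of R_j} and the boundedness of the Hartree nonlinearity), $\Psi^{(N)}$ satisfies \eqref{eq estimate of Psi}, and, by \eqref{eq definition of S_j^N}, $S_j^{(N)}$ is a linear combination of $V_j^{(N)}$, $y_jV_j^{(N)}$, $\Lambda V_j^{(N)}$, $\nabla V_j^{(N)}$ and their parameter-derivatives with coefficients the modulation defects $\mathrm{Mod}_j(t):=(\dot\alpha_j-2\beta_j,\ \dot\beta_j-B_j^{(N)}(P),\ \dot\lambda_j-M_j^{(N)}(P),\ \dot\gamma_j+\lambda_j^{-2}-|\beta_j|^2-\dot\beta_j\cdot\alpha_j)$, which vanish exactly when $(P,\gamma)$ solves the system of Propositions \ref{prop hyperbolic trajectory} and \ref{prop uniform estimate}. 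In particular $\tfrac{d}{dt}(g-g^{(N)})$ is controlled by $|\mathrm{Mod}(t)|$, the transport relation $\dot\alpha_j-\dot\alpha_j^{(N)}=2(\beta_j-\beta_j^{(N)})+(\dot\alpha_j-2\beta_j)$, and the Lipschitz dependence of $B_j^{(N)},M_j^{(N)}$ on $P$, which on the hyperbolic regime $a\sim t$ contributes only $O(t^{-3})|P-P^{(N)}|$ since $m_j^{(1)}=b_j^{(1)}=0$, cf. \eqref{eq estimate of M_j, B_j}.

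For the modulation estimate I would differentiate each orthogonality identity in \eqref{eq orthogonality} in $t$, substitute the $\varepsilon$-equation, and use that $\varepsilon$ is orthogonal to $g_jV_j^{(N)}$, $g_j(y_jV_j^{(N)})$, $g_j(\Lambda V_j^{(N)})$, $g_j(\nabla V_j^{(N)})$ to cancel the terms where $\partial_t$ lands on $\varepsilon$. This yields a linear system $A(t)\,\mathrm{Mod}(t)=b(t)$ whose matrix, by the computation in the proof of Lemma \ref{lem orthogonality, time independent version}, is a perturbation of a fixed invertible block matrix — the perturbation being $O(a^{-1}+\|\varepsilon\|_{H^1})$ — and whose right-hand side obeys $|b(t)|\lesssim\|\varepsilon(t)\|_{H^1}^2+\|\Psi^{(N)}(t)\|_{L^2}+e^{-ca(t)}$ (the $\Psi^{(N)}$ pairing is $\lesssim a^{-(N+1)}$ by \eqref{eq estimate of Psi}, the bubble interactions are exponentially small in $a$ by Lemma \ref{lem localization}, and the genuinely linear-in-$\varepsilon$ contributions are carried by the perturbation of $A$ and absorbed after inversion). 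Hence $|\mathrm{Mod}(t)|\lesssim\|\varepsilon(t)\|_{H^1}^2+t^{-(N+1)}$; integrating this backward from $T_n$ together with the transport relation and the $O(t^{-3})$ Lipschitz gain, and feeding in \eqref{eq bootstrap assumption}, yields $|\lambda-\lambda^{(N)}|+|\beta-\beta^{(N)}|$ and $|\gamma-\gamma^{(N)}|+|\alpha-\alpha^{(N)}|$ bounded by quantities that decay strictly faster than (or with a constant $O(1/N)$ times) the targets in \eqref{eq bootstrap conclusion}, so the last two lines close for $N$ large.

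The core is the energy estimate for $\|\varepsilon\|_{H^1}$. Following the two-body construction of \cite{KMR2bodyHartree} in its $m$-bubble form, I would build a functional $\mathcal{W}(t)$ — schematically the Hamiltonian of $u_n$ corrected by localized mass and momentum terms with the weights $\lambda_j(t)^{-2}$ and $\beta_j(t)$, with cutoffs $\chi_j(t,x)$ transported with the $j$-th bubble, and by the analogous quantities evaluated on $R_g^{(N)}$ — arranged so that: (i) $\mathcal{W}(t)$ is quadratic in $\varepsilon(t)$ to leading order, its constant and linear parts cancelling by the orthogonality conditions \eqref{eq orthogonality} and the choice of weights; (ii) passing to each bubble frame, its quadratic part equals $\sum_j\bigl(\langle L_+\varepsilon_{1,j},\varepsilon_{1,j}\rangle+\langle L_-\varepsilon_{2,j},\varepsilon_{2,j}\rangle\bigr)+O(a^{-1}\|\varepsilon\|_{H^1}^2)$, so that, by the coercivity of $L_\pm$ on the orthogonal complement of their (generalized) kernels — available precisely because of \eqref{eq orthogonality}, the kernel description of \cite{Lenzmanngroundstate}, and the subcriticality of \eqref{eq hartree} — one gets $\mathcal{W}(t)\gtrsim\|\varepsilon(t)\|_{H^1}^2$ after absorbing the $O(|P-P^{(N)}|\,\|\varepsilon\|_{H^1}^2)$ corrections; and (iii) since mass, momentum, and the Hamiltonian are conserved by \eqref{eq hartree} and the $\varepsilon$-independent pieces cancel, the time derivative obeys $|\tfrac{d}{dt}\mathcal{W}(t)|\lesssim\|\Psi^{(N)}(t)\|_{L^2}\|\varepsilon(t)\|_{H^1}+|\mathrm{Mod}(t)|\,\|\varepsilon(t)\|_{H^1}^2+\|\varepsilon(t)\|_{H^1}^3+e^{-ca(t)}$, the $\partial_t\chi_j$-terms being exponentially small in $a$ because $\nabla\chi_j$ is supported where $R_g^{(N)}$ is. Integrating $\tfrac{d}{dt}\mathcal{W}$ backward from $T_n$ (where $\mathcal{W}(T_n)=0$ since $\varepsilon(T_n)=0$) and inserting \eqref{eq bootstrap assumption} and the modulation estimate gives $\|\varepsilon(t)\|_{H^1}^2\lesssim t^{-\frac{N}{2}-\delta}$ for some $\delta=\delta(N)>0$ once $N>4$, hence $\le\tfrac{1}{4} t^{-N/2}$ for $t\ge T_0(N)$, which is the first line of \eqref{eq bootstrap conclusion}.

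The step I expect to be the main obstacle is the energy estimate, for two reasons. First, one must pin down the exact combination of localized mass, momentum and Hamiltonian for which the Hessian collapses to $\sum_j\langle L_\pm\varepsilon_{\cdot,j},\varepsilon_{\cdot,j}\rangle$ and for which the linear-in-$\varepsilon$ part of $\tfrac{d}{dt}\mathcal{W}$ reduces to the orthogonality functionals (hence vanishes up to $(\Psi^{(N)},\varepsilon)$); this is more intricate with $m$ bubbles than with two. Second, and more importantly, one must verify that the slowly decaying ($\sim a^{-1}$) gravitational cross-interactions never resurface at leading order — they must have been entirely absorbed into the profiles $V_j^{(N)}$ by Proposition \ref{prop construction of approximate bubbles}, leaving only the fast error $\Psi^{(N)}=O(a^{-(N+1)})$ (Proposition \ref{prop accuracy of approximate solution}) and exponentially small remainders (Lemma \ref{lem localization}). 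The modulation estimate and the spectral coercivity are, by comparison, routine $m$-bubble adaptations of the arguments in \cite{KMR2bodyHartree}.
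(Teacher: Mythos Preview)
Your overall architecture is the paper's: differentiate the orthogonality conditions to bound the modulation defects, then control $\|\varepsilon\|_{H^1}$ through a localized energy functional whose quadratic form reduces to $\sum_j(L_+\cdot,\cdot)+(L_-\cdot,\cdot)$ in the bubble frames, integrate backward from $T_n$, and close for $N$ large. Two of your specific claims, however, are off, and the paper handles those points differently.

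\textbf{Modulation estimate.} You assert $|b(t)|\lesssim\|\varepsilon\|_{H^1}^2+\|\Psi^{(N)}\|_{L^2}+e^{-ca}$, with the ``genuinely linear-in-$\varepsilon$ contributions carried by the perturbation of $A$''. That is not what happens. When you pair the linearized operator with the test directions $\theta\in\{iV_j,\nabla V_j,\Lambda V_j,iy_jV_j\}$, the identities $L_j(iV_j)=iW_j$, $L_j(\nabla V_j)=\nabla W_j$, etc.\ show that $L_j\theta$ equals an orthogonality direction \emph{only up to an admissible error of degree $\ge 2$}; this leaves a residual linear term $O(\|\varepsilon\|_{H^1}/a^2)$ on the right-hand side which cannot be moved into the matrix. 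The paper's bound is
\[
Mod(t)\le \frac{C\|\varepsilon\|_{H^1}}{a^2}+\frac{C_N}{a^{N+1}}+C_N\|\varepsilon\|_{H^1}^2,
\]
not a purely quadratic one. This still integrates fine, but the mechanism is not what you describe.

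\textbf{Energy functional.} Two issues. First, you say the linear-in-$\varepsilon$ part of $\mathcal{W}$ cancels by orthogonality. The paper notes explicitly that this would hold if $R$ solved the ground-state equation, but $R$ is only an \emph{approximate} profile, so the linear part does not vanish. For this reason the paper does \emph{not} work with $\mathcal{E}(u_n)-\mathcal{E}(R)$ but defines directly the quadratic-and-higher functional $\mathcal{G}(\varepsilon)=\mathcal{G}_1+\mathcal{G}_2+\mathcal{G}_3$ and computes $\tfrac{d}{dt}\mathcal{G}$ term by term. Second, your claim that the $\partial_t\chi_j$-terms are exponentially small ``because $\nabla\chi_j$ is supported where $R_g^{(N)}$ is'' is wrong: in the localized mass and momentum pieces the cutoffs multiply $|\varepsilon|^2$ and $\mathrm{Im}(\nabla\varepsilon\,\overline{\varepsilon})$, not $R$, so their time derivatives produce $O(\|\varepsilon\|_{H^1}^2/a)=O(\|\varepsilon\|_{H^1}^2/t)$. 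This term is in fact the \emph{leading} contribution to the paper's bound $\bigl|\tfrac{d}{dt}\mathcal{G}\bigr|\le Ct^{-1-N/2}$, and the factor $1/N$ gained upon integrating $t^{-1-N/2}$ is precisely what yields $\|\varepsilon\|_{H^1}^2\le \tfrac{C}{N}t^{-N/2}\le\tfrac14 t^{-N/2}$.

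Neither error is fatal: replacing your exponential remainders by $\|\varepsilon\|_{H^1}/a^2$ and $\|\varepsilon\|_{H^1}^2/t$ the bootstrap still closes. But you should be aware that the slowly decaying cutoff contribution is the main term, not a negligible one, and that the paper sidesteps the linear-in-$\varepsilon$ cancellation issue by working with $\mathcal{G}$ rather than with a difference of Hamiltonians.
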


\begin{proof} [Proof of Proposition \ref{prop uniform estimate} by Proposition \ref{prop bootstrap}] \ \par
As the left hand sides are continuous in $t$, by a bootstrap argument, we know \eqref{eq bootstrap assumption} actually holds for any $t \in [T_0,T_n]$. Then by \eqref{eq dependence of R in parameters}, we have
\begin{equation} \begin{aligned}
    \left\Vert u_n(t)- R_{g^{(N)}}^{(N)} \right\Vert_{H^1} &\le \left\Vert u_n(t)- R_g^{(N)} \right\Vert_{H^1}+ \left\Vert R_g^{(N)}- R_{g^{(N)}}^{(N)} \right\Vert_{H^1} \\
    &\le \Vert \varepsilon(t) \Vert_{H^1}+ C_N \Vert g- g^{(N)} \Vert \le t^{-\frac{N}{4}}+ C_N t^{-\frac{N}{8}} \le t^{-\frac{N}{9}}
\end{aligned} \end{equation}
when $T_0(N)$ is large enough. This finishes the proof of Proposition \ref{prop uniform estimate}.
\end{proof}

So far, we have reduced the hyperbolic case to Proposition \ref{prop bootstrap}.

\section{Estimates of the modulation} \label{sec estimate}

To simplify notations, we write $R_j= R_{j,g}^{(N)}$, $R= R_g^{(N)}$, $M_j= M_j^{(N)}$, $B_j= B_j^{(N)}$, $V_j= V_j^{(N)}$, $S_j= S_j^{(N)}$ and $\Psi= \Psi^{(N)}$. But we will make clear whether a constant depends on $N$.

By \eqref{eq equation of un} and \eqref{eq definition of Psi}, we have
\begin{equation} \label{eq equation of epsilon}
    i\partial_t \varepsilon+ \Delta \varepsilon- 2\phi_{\mathrm{Re} (\varepsilon \overline{R})}R- \phi_{|R|^2} \varepsilon= \N(\varepsilon)- \Psi- \sum_{j=1}^m \frac{1}{\lambda_j^4} S_j(t,x) e^{-i\gamma_j+ i\beta_j \cdot x},
\end{equation} 
where
\begin{equation}
    \N(\varepsilon)= 2\phi_{\mathrm{Re} (\varepsilon \overline{R})} \varepsilon+ \phi_{|\varepsilon|^2}R+ \phi_{|\varepsilon|^2} \varepsilon.
\end{equation}
By the Sobolev inequality and the Hardy-Littlewood-Sobolev inequality, we have
\begin{equation} \label{eq estimate of N(epsilon)}
    \Vert \N(\varepsilon) \Vert_{L^2} \le C_N \Vert \varepsilon \Vert_{H^1}^2.
\end{equation}

By \eqref{eq hyperbolic solution}, \eqref{eq estimate of hyperbolic trajectory}, \eqref{eq bootstrap assumption} and \eqref{eq estimate of M_j, B_j}, we have the following asymptotic properties:
\begin{equation} \label{eq asymptotic behavior of hyperbolic trajectory}
    a(t) \sim t, \quad |\alpha_j| \lesssim t, \quad |\beta_j| \lesssim 1, \quad |\dot{\beta}_j| \lesssim \frac{1}{t^2}, \quad \lambda_j \sim 1, \quad |\dot{\lambda}_j| \lesssim \frac{1}{t^2}.
\end{equation}
In particular, \eqref{eq boundedness of g} is satisfied. 

\subsection{Control of the parameters} \label{subsec parameter}

In this subsection, we aim at proving the second and the third line of \eqref{eq bootstrap conclusion}.

Define the modulation error 
\begin{equation} \begin{aligned}
    Mod(t)= \sum_{j=1}^m \Bigg( &\Big| \dot{\alpha}_j(t)- 2\beta_j(t) \Big| + \Big| \dot{\beta}_j(t)- B_j(P(t)) \Big| + \Big| \dot{\lambda}_j(t)- M_j(P(t)) \Big| \\
    &+ \bigg| \dot{\gamma}_j(t)+ \frac{1}{\lambda_j^2(t)}- |\beta_j(t)|^2- \dot{\beta}_j(t) \cdot \alpha_j(t) \bigg| \Bigg). 
\end{aligned} \end{equation}
First notice that $S_j$ is controlled by $Mod$:
\begin{equation} \label{eq estimate of S_j with Mod}
    |S_j(t,x)| \le C_N Mod(t) e^{-c_N|x-\alpha_j(t)|}.
\end{equation}

Let $\theta(t,x)$ be a function such that
\begin{equation} \label{eq decay of theta}
    |\theta(t,x)| \le C e^{-c|x|}+ C_N a^{-1} e^{-c_N|x|}, \quad \forall t>0,\ x \in \mathbb{R}^3,
\end{equation}
which corresponds to \eqref{eq decay of V_j}, \eqref{eq decay of R_j}, and let $\theta_j= g_j \theta$.

Using \eqref{eq equation of epsilon} and integration by parts, we can compute
\begin{equation} \begin{aligned}
    \frac{\d}{\dt} \mathrm{Im} \int \varepsilon \overline{\theta_j}= &\ \mathrm{Re} \int \varepsilon \left( \overline{i\partial_t \theta_j+ \Delta \theta_j- 2\phi_{\mathrm{Re} (\theta_j \overline{R})}R- \phi_{|R|^2} \theta_j} \right) \\
    &+ \mathrm{Re} \int (\Psi- \N(\varepsilon)) \overline{\theta_j}+ \sum_{j=1}^m \mathrm{Re} \int \frac{1}{\lambda_j^4} S_j(t,x) e^{-i\gamma_j+ i\beta_j \cdot x} \overline{\theta_j}.
\end{aligned} \end{equation}
By \eqref{eq basic calculation}, \eqref{eq estimate of M_j, B_j} and \eqref{eq decay of theta}, we have
\begin{equation} \begin{aligned}
    i\partial_t \theta_j+ \Delta \theta_j &= \frac{1}{\lambda_j^4} \left( i\lambda_j^2 \partial_t \theta+ \Delta \theta- \theta \right) e^{-i\gamma_j+ i\beta_j \cdot x} \\
    &\quad + O \left( \frac{1}{a^2}+ \frac{C_N}{a^3}+ Mod \right) \left( e^{-c |x-\alpha_j|}+ \frac{C_N}{a} e^{-c_N|x-\alpha_j|} \right).
\end{aligned} \end{equation}
By Lemma \ref{lem localization}, \eqref{eq decay of R_j} and \eqref{eq decay of theta}, we have
\begin{equation} \begin{aligned}
    \phi_{\mathrm{Re} (\theta_j \overline{R})} R &= \frac{1}{\lambda_j^4} \phi_{\mathrm{Re} (\theta \overline{V_j})} V_j e^{-i\gamma_j+ i\beta_j \cdot x}+ \phi_{\mathrm{Re} (\theta_j \overline{R}_j)} \sum_{k \neq j} R_k \\
    &\quad + O_N \Big( e^{-c_N a} \max_k e^{-c_N|x-\alpha_k|} \Big).
\end{aligned} \end{equation}
By Lemma \ref{lem localization}, \eqref{eq decay of R_j}, \eqref{eq decay of theta}, \eqref{eq accuaracy of dipole expansion} and the explicit formula of $\psi^{(2)}$, we have
\begin{equation} \begin{aligned}
    \phi_{|R|^2} \theta_j &= \frac{1}{\lambda_j^4} \bigg( \phi_{|V_j|^2}+ \sum_{k \neq j} \psi_{|V_k|^2}^{(1)} \bigg) \theta e^{-i\gamma_j+ i\beta_j \cdot x}+ O \left( \frac{1}{a^2} e^{-c |x-\alpha_j|} \right) \\
    &\quad + O_N \Big( (e^{-c_N a}+a^{-3}) e^{-c_N|x-\alpha_j|} \Big).
\end{aligned} \end{equation}

We collect the terms of degree $1$ in $\theta$
\begin{equation}
    L_j \theta:= -\Delta \theta+ \theta+ 2\phi_{\mathrm{Re} (\theta \overline{V_j})} V_j+ \bigg( \phi_{|V_j|^2}+ \sum_{k \neq j} \psi_{|V_k|^2}^{(1)} \bigg) \theta. 
\end{equation}
With \eqref{eq asymptotic behavior of hyperbolic trajectory}, if we take $T_0(N)$ large enough, then we have
\begin{equation} \begin{aligned}
    &i\partial_t \theta_j+ \Delta \theta_j- 2\phi_{\mathrm{Re} (\theta_j \overline{R})}R- \phi_{|R|^2} \theta_j \\
    =&\ \frac{1}{\lambda_j^4} \left( i\lambda_j^2 \partial_t \theta- L_j \theta \right) e^{-i\gamma_j+ i\beta_j \cdot x}- 2\phi_{\mathrm{Re} (\theta_j \overline{R}_j)} \sum_{k \neq j} R_k\\
    &\ +O \left( \frac{1}{a^2}+ Mod \right) e^{-c |x-\alpha_j|}+ O_N \left( \frac{1}{a^3}+ \frac{Mod}{a} \right) e^{-c_N \min_j |x-\alpha_j|}.
\end{aligned} \end{equation}
Inserting this into the previous formula, using \eqref{eq estimate of Psi}, \eqref{eq decay of theta}, \eqref{eq estimate of N(epsilon)}, and again taking $T_0(N)$ large enough, we get
\begin{equation} \begin{aligned}
    \frac{\d}{\dt} \mathrm{Im} \int \varepsilon \overline{\theta_j}= &\ \mathrm{Re} \int \frac{\varepsilon}{\lambda_j^4} \overline{\left( i\lambda_j^2 \partial_t \theta- L_j \theta \right) e^{-i\gamma_j+ i\beta_j \cdot x}}- 2\mathrm{Re} \int \varepsilon \phi_{\mathrm{Re} (\theta_j \overline{R}_j)} \sum_{k \neq j} \overline{R_k} \\
    &+ \frac{1}{\lambda_j^6} \mathrm{Re} \int S_j \overline{\theta}+ O \left( \frac{\Vert \varepsilon \Vert_{H^1}}{a^2}+ Mod \Vert \varepsilon \Vert_{H^1} \right)+ O_N \left( \frac{1}{a^{N+1}}+ \Vert \varepsilon \Vert_{H^1}^2 \right).  
\end{aligned} \end{equation}

We will take $\theta$ to be $iV_j$, $\nabla V_j$, $\Lambda V_j$ and $iy_j V_j$. By \eqref{eq orthogonality}, the left hand side always vanishes. By \eqref{eq decay of V_j}, \eqref{eq asymptotic behavior of hyperbolic trajectory} and \eqref{eq estimate of M_j, B_j}, we always have
\begin{equation} \label{eq estimate on theta: partial_t}
    \partial_t \theta= O \left( \frac{1}{a^2}+ \frac{C_N}{a^3}+ \frac{Mod}{a} \right) \left( e^{-c |x-\alpha_j|}+ \frac{C_N}{a} e^{-c_N|x-\alpha_j|} \right).
\end{equation}
By the proof of Proposition \ref{prop construction of approximate bubbles}, we know 
\begin{equation}
    W_j:= -\Delta V_j+ V_j+ \bigg( \phi_{|V_j|^2}+ \sum_{k \neq j} \psi_{|V_k|^2}^{(1)} \bigg) V_j
\end{equation}
is admissible of degree $\ge 2$. Direct computation yields
\begin{equation} \begin{gathered}
    L_j (iV_j) = iW_j, \quad L_j (\Lambda V_j) = (\Lambda+2) W_j- 2 \bigg(1+ \sum_{k \neq j} \psi_{|V_k|^2}^{(1)} \bigg) V_j, \\
    L_j (\nabla V_j) = \nabla W_j, \quad L_j (iy_j V_j) =iy_j W_j- 2i \nabla V_j.
\end{gathered} \end{equation}
By \eqref{eq estimate of admissible functions}, \eqref{eq orthogonality} and that $\psi^{(1)}$ is constant, we always have
\begin{equation}
    L_j \theta= f+ O\left( \frac{1}{a^2}+ \frac{C_N}{a^3} \right) \left( e^{-c |x-\alpha_j|}+ \frac{C_N}{a} e^{-c_N|x-\alpha_j|} \right),
\end{equation}
where $f$ is a function such that $\mathrm{Re} \int \varepsilon (\overline{g_j f})= 0$. Thus
\begin{equation} \label{eq estimate on theta: L_j}
    \mathrm{Re} \int \frac{\varepsilon}{\lambda_j^4} \overline{ L_j \theta\ e^{-i\gamma_j+ i\beta_j \cdot x}}= O \left( \frac{\Vert \varepsilon \Vert_{H^1}}{a^2}+ \frac{C_N \Vert \varepsilon \Vert_{H^1}}{a^3} \right).
\end{equation}
By \eqref{eq accuaracy of dipole expansion} and the explicit formula of $\psi^{(2)}$, we have
\begin{equation}
    \phi_{\mathrm{Re} (\theta_j \overline{R_j})}= \psi_{\mathrm{Re} (\theta_j \overline{R_j})}^{(1)}+ O \left( \frac{1}{a^2}+ \frac{C_N}{a^3} \right) \big( 1+|x-\alpha_j| \big)^2.
\end{equation}
Since $\psi^{(1)}$ is constant, by \eqref{eq orthogonality}, we always have
\begin{equation} \label{eq estimate on theta: phi}
    2\mathrm{Re} \int \varepsilon \phi_{\mathrm{Re} (\theta_j \overline{R}_j)} \sum_{k \neq j} \overline{R_k}= O \left( \frac{\Vert \varepsilon \Vert_{H^1}}{a^2}+ \frac{C_N \Vert \varepsilon \Vert_{H^1}}{a^3} \right).
\end{equation}
Finally, using \eqref{eq definition of S_j^N}, \eqref{eq decay of V_j} and that $Q$ is even, for $\theta$ taken as the four functions, 
\begin{equation} \label{eq estimate on theta: S_j}
    \sum_{\theta} \sum_{j=1}^m \left| \mathrm{Re} \int S_j \overline{\theta} \right| \ge c\ Mod- \frac{C_N Mod}{a}.
\end{equation}

Therefore, gathering \eqref{eq estimate on theta: partial_t}, \eqref{eq estimate on theta: L_j}, \eqref{eq estimate on theta: phi} and \eqref{eq estimate on theta: S_j}, we obtain
\begin{equation}
    Mod(t) \le \frac{C\Vert \varepsilon \Vert_{H^1}}{a^2}+ \frac{C_N \Vert \varepsilon \Vert_{H^1}}{a^3}+ \frac{C_N Mod(t)}{a}+ \frac{C_N}{a^{N+1}} + C_N\Vert \varepsilon \Vert_{H^1}^2. 
\end{equation}
Taking $T_0(N)$ large enough to absorb some $O_N$ terms, we get
\begin{equation} \label{eq estimate of Mod with epsilon}
    Mod(t) \le \frac{C\Vert \varepsilon \Vert_{H^1}}{a^2}+ \frac{C_N}{a^{N+1}} + C_N\Vert \varepsilon \Vert_{H^1}^2.    
\end{equation}
Using \eqref{eq bootstrap assumption} and \eqref{eq asymptotic behavior of hyperbolic trajectory}, we can get the decay of $Mod$:
\begin{equation} \label{eq estimate of Mod}
    Mod(t) \le t^{-\frac{N}{4}}, \qquad \forall t \in [T_*,T_n].
\end{equation}

We are now going to deduce the second and third lines of \eqref{eq bootstrap conclusion}. By the fundamental theorem of calculus, we have
\begin{equation} \begin{aligned}
    &\big| M_j(P)- M_j(P^{(N)}) \big|+ \big| B_j(P)- B_j(P^{(N)}) \big| \\
    \le &\ C \left( \frac{|\alpha- \alpha^{(N)}|}{a^3}+ \frac{|\beta- \beta^{(N)}|}{a^2}+ \frac{|\lambda- \lambda^{(N)}|}{a^2} \right) \\
    &\ + C_N \left( \frac{|\alpha- \alpha^{(N)}|}{a^4}+ \frac{|\beta- \beta^{(N)}|}{a^3}+ \frac{|\lambda- \lambda^{(N)}|}{a^3} \right).
\end{aligned} \end{equation}
Using \eqref{eq trajectory}, \eqref{eq bootstrap assumption}, \eqref{eq asymptotic behavior of hyperbolic trajectory} and \eqref{eq estimate of Mod}, if  $T_0(N)$ is large enough, then we get
\begin{equation} \begin{aligned}
    &\big| \dot{\lambda}_j- \dot{\lambda}_j^{(N)} \big|+ \big| \dot{\beta}_j- \dot{\beta}_j^{(N)} \big| \\
    \le &\ Mod+ \big| M_j(P)- M_j(P^{(N)}) \big|+ \big| B_j(P)- B_j(P^{(N)}) \big| \le Ct^{-2-\frac{N}{8}}.
\end{aligned} \end{equation}
Integrating in $t$ and using \eqref{eq value of epsilon at T_n}, we deduce
\begin{equation} \label{eq bootstrap conclusion, beta and lambda}
    \big| \lambda_j- \lambda_j^{(N)} \big|+ \big| \beta_j- \beta_j^{(N)} \big| \le \frac{C}{N} t^{-1-\frac{N}{8}} \le \frac{1}{2} t^{-1-\frac{N}{8}}
\end{equation}
when $N$ is large enough. This is the second line of \eqref{eq bootstrap conclusion}.

By \eqref{eq estimate of Mod} and \eqref{eq asymptotic behavior of hyperbolic trajectory}, we also have
\begin{equation}
    \big| \dot{\alpha}_j- \dot{\alpha}_j^{(N)} \big| \le 2 \big| \beta_j- \beta_j^{(N)} \big|+ t^{-\frac{N}{4}} 
\end{equation}
and 
\begin{equation}
    \big| \dot{\gamma}_j- \dot{\gamma}_j^{(N)} \big| \le C \left( \big| \lambda_j- \lambda_j^{(N)} \big|+ \big| \beta_j- \beta_j^{(N)} \big|+ t\big| \dot{\beta}_j- \dot{\beta}_j^{(N)} \big|+ \big| \dot{\alpha}_j- \dot{\alpha}_j^{(N)} \big| \right)+ t^{-\frac{N}{4}}.
\end{equation}
We then deduce the third line of \eqref{eq bootstrap conclusion} for large $N$ by \eqref{eq value of epsilon at T_n} and \eqref{eq bootstrap conclusion, beta and lambda}.

\subsection{Control of the error} \label{subsec error}

In this subsection, we aim at proving the first line of \eqref{eq bootstrap conclusion}. We start with the construction of cutoff functions.

\begin{lem} \label{lem cutoff}
There exist $c,C>0$ and $\varphi_j \in C^{1,\infty} (\mathbb{R}_+ \times \mathbb{R}^3)$ for $1 \le j \le m$ such that
\begin{equation} \label{eq cutoff} \begin{gathered}
    0 \le \varphi_j(t,x) \le 1, \quad \sum_{j=1}^m \varphi_j(t,x) \equiv 1, \\ 
    |\partial_t \varphi_j|+ |\nabla \varphi_j| \le \frac{C}{a}, \quad |\partial_t \sqrt{\varphi_j}|+ |\nabla \sqrt{\varphi_j}| \le \frac{C}{a}, \\
    \varphi_j(t,x)= \left\{ \begin{aligned}
        &1, \quad |x-\alpha_j(t)| \le ca(t), \\
        &0, \quad |x-\alpha_k(t)| \le ca(t),\ k \neq j.
    \end{aligned} \right.
\end{gathered} \end{equation}
\end{lem}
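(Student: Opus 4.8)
The plan is to construct the square roots $\sqrt{\varphi_j}$ directly, because the bounds on $|\partial_t\sqrt{\varphi_j}|$ and $|\nabla\sqrt{\varphi_j}|$ are the only delicate point: a generic smooth partition of unity subordinate to the balls $B(\alpha_j(t),a(t))$ vanishes only to first order, so its square root is not Lipschitz with constant $O(a^{-1})$. We avoid this by first producing smooth functions $\sigma_j\ge0$ with $\sum_j\sigma_j^2\equiv1$ through a telescoping product built from $\sin$ and $\cos$ profiles, and then setting $\varphi_j:=\sigma_j^2$.

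In detail: since $t\mapsto a(t)$ is only Lipschitz (with constant $\lesssim\max_j|\beta_j|\lesssim1$ and $a(t)\ge c_0$ by \eqref{eq asymptotic behavior of hyperbolic trajectory}), we first fix a smooth $\bar a$ with $c_1 a(t)\le\bar a(t)\le a(t)$ and $|\dot{\bar a}(t)|\le C$, obtained by mollifying $a$ at a fixed scale. Fix $c=\tfrac14$ and $\omega\in C^\infty(\mathbb{R}^3,[0,1])$ with $\omega\equiv1$ on $B(0,1)$ and $\omega\equiv0$ off $B(0,2)$, and put $\omega_j(t,x)=\omega\big(\tfrac{x-\alpha_j(t)}{c\,\bar a(t)}\big)$. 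Define
\[
\sigma_1=\sin\Big(\tfrac{\pi}{2}\omega_1\Big),\qquad
\sigma_j=\Big(\prod_{k<j}\cos\big(\tfrac{\pi}{2}\omega_k\big)\Big)\sin\big(\tfrac{\pi}{2}\omega_j\big)\ \ (1<j<m),\qquad
\sigma_m=\prod_{k<m}\cos\big(\tfrac{\pi}{2}\omega_k\big),
\]
and $\varphi_j:=\sigma_j^2$. Each $\sigma_j$ is $C^1$ in $t$, $C^\infty$ in $x$, and takes values in $[0,1]$; an induction on $J$ using $\sin^2+\cos^2=1$ gives $\sum_{j\le J}\sigma_j^2=1-\prod_{k\le J}\cos^2\big(\tfrac{\pi}{2}\omega_k\big)$, whence $\sum_{j=1}^m\varphi_j\equiv1$.

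For the localization, note that for $c=\tfrac14$ the balls $B(\alpha_\ell,c\,\bar a)$ and $B(\alpha_k,2c\,\bar a)$ with $k\ne\ell$ are disjoint, since $|\alpha_\ell-\alpha_k|\ge a\ge\bar a$. Hence on $B(\alpha_\ell,c\,\bar a)$ we have $\omega_\ell\equiv1$ and $\omega_k\equiv0$ for $k\ne\ell$, so every factor defining $\sigma_\ell$ equals $1$ and $\sigma_\ell\equiv1$ there; by $\sum_j\sigma_j^2=1$ and $\sigma_j\ge0$ this forces $\varphi_j\equiv0$ on $B(\alpha_\ell,c\,\bar a)$ for $j\ne\ell$. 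Since $\bar a\ge c_1a$, this yields the last line of \eqref{eq cutoff} (with $c$ there replaced by $c\,c_1$). Finally $|\nabla\omega_j|\lesssim(c\,\bar a)^{-1}\lesssim a^{-1}$, and on $\mathrm{supp}\,\nabla\omega\big(\tfrac{\cdot-\alpha_j}{c\,\bar a}\big)$ one has $|x-\alpha_j|\le2c\,\bar a$, so $|\partial_t\omega_j|\lesssim \bar a^{-1}\big(|\dot\alpha_j|+|\dot{\bar a}|\big)\lesssim a^{-1}$ by \eqref{eq asymptotic behavior of hyperbolic trajectory}; since $\sin,\cos$ are $1$-Lipschitz and all factors are bounded by $1$, the Leibniz rule gives $|\nabla\sigma_j|+|\partial_t\sigma_j|\lesssim a^{-1}$ (constant depending only on $m$), hence $|\nabla\varphi_j|+|\partial_t\varphi_j|\le2\big(|\nabla\sigma_j|+|\partial_t\sigma_j|\big)\lesssim a^{-1}$. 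As $\sigma_j\ge0$ we have $\sqrt{\varphi_j}=\sigma_j$, so the corresponding bounds for $\sqrt{\varphi_j}$ follow as well.

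The only genuine obstacle is the square-root regularity in \eqref{eq cutoff}, which is precisely why one builds the $\sigma_j$ first rather than an arbitrary partition of unity; the telescoping $\sin/\cos$ identity makes both $\sum_j\sigma_j^2=1$ and the smoothness of each $\sigma_j$ automatic, and everything else reduces to a routine product-rule estimate together with the a priori bounds \eqref{eq asymptotic behavior of hyperbolic trajectory}.
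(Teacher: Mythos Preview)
Your argument is correct and takes a genuinely different route from the paper. The paper exploits the hyperbolic relation $a(t)\sim t$ and simply scales by $t$: it picks a profile $\Phi$ satisfying $|\nabla\Phi|\le C\sqrt{1-\Phi}$ (obtained as $\Phi=1-(1-\Phi_0)^2$), sets $\varphi_j=\Phi^2\big((x-\alpha_j)/t\big)$ for $j<m$, and defines $\varphi_m$ as the remainder; disjointness of the supports for $j<m$ reduces the square-root bound for $\varphi_m$ to the pointwise inequality on $\Phi$. Your telescoping $\sin/\cos$ construction instead produces the square roots $\sigma_j$ directly with $\sum_j\sigma_j^2\equiv1$, which is more systematic (it would work even if the bumps overlapped) and, because you scale by a mollified $\bar a$ rather than by $t$, is not tied to the hyperbolic asymptotics. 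The paper's approach is shorter here precisely because it leans on $a\sim t$; yours buys portability to the parabolic/hyperbolic-parabolic settings at the cost of one extra step. One small imprecision: a plain mollification of $a$ need not give $\bar a\le a$, only $|\bar a-a|\le C$; either subtract the constant, or relax to $c_1 a\le\bar a\le C_1 a$ and choose $c<1/(3C_1)$ so that $3c\,\bar a<a$ still forces the required disjointness. With that adjustment the proof goes through.
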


\begin{proof}
There exist $c_0,C_0>0$ such that $c_0 t < a(t) < C_0 t$. Take $c \in (0, \frac{1}{2})$ and $r,R>0$ such that $cC_0<r<R<\frac{1}{2}c_0$. Let $\Phi \in C^\infty(\mathbb{R}^3)$ be such that 
\begin{equation}
    0 \le \Phi \le 1, \quad \text{supp } \Phi \subset B_R, \quad \Phi=1 \text{ in } B_r, \quad |\nabla \Phi| \le C\sqrt{1-\Phi}.
\end{equation}
Here, the last property can be satisfied by taking $\Phi_0$ satisfying the other properties and setting $\Phi= 1- (1-\Phi_0)^2$. Then we define
\begin{equation}
    \varphi_j(t,x)= \Phi^2 \Big( \frac{x-\alpha_j(t)}{t} \Big), \quad j=1,2, \cdots, m-1,
\end{equation}
and
\begin{equation}
    \varphi_m(t,x)= 1- \sum_{j=1}^{m-1} \varphi_j(t,x).
\end{equation}
We claim that \eqref{eq cutoff} holds. Only the estimates on $\varphi_m$ need to be checked. 

We have $\varphi_m \in [0,1]$ because $\text{supp } \varphi_j$ are pairwise disjoint. The derivatives of $\sqrt{\varphi_m}$ are bounded because of the last property of $\Phi$.
\end{proof}

Combining the properties of the cutoff functions and \eqref{eq estimate of admissible functions}, we have
\begin{equation} \label{eq localization of R_j and cutoff}
    |\varphi_j R- R_j| \le C_N e^{-ca(t)}, \quad \forall t>0,\ x \in \mathbb{R}^3.
\end{equation}
This means $\varphi_j$ localizes the multisoliton solutions.

Consider the sum of truncated conserved quantities of the Hartree equation
\begin{equation} \begin{aligned}
    \mathcal{E}(u)= &\int |\nabla u|^2- \frac{1}{2} \int \left| \nabla \phi_{|u|^2} \right|^2 \\
    &+ \sum_{j=1}^m \bigg[ \Big( \frac{1}{\lambda_j^2}+ |\beta_j|^2 \Big) \int \varphi_j |u|^2- 2\beta_j \int \varphi_j \mathrm{Im} (\nabla u \overline{u}) \bigg].
\end{aligned} \end{equation}
By the decomposition $u=R+ \varepsilon$, we can expand $\E$ in terms of $\varepsilon$. Then the second or higher order terms is $\G(\varepsilon)= \G_1+\G_2+\G_3$, where
\begin{gather*}
    \G_1= \int |\nabla \varepsilon|^2+ \int \phi_{|R|^2} |\varepsilon|^2- 2\int |\nabla \phi_{\mathrm{Re} (\varepsilon \overline{R}) }|^2+ 2\int \phi_{\mathrm{Re} (\varepsilon \overline{R}) } |\varepsilon|^2- \frac{1}{2} \int |\nabla \phi_{|\varepsilon|^2} |^2, \\
    \G_2= \sum_{j=1}^m \Big( \frac{1}{\lambda_j^2}+ |\beta_j|^2 \Big) \int \varphi_j |\varepsilon|^2, \qquad \G_3= -2\sum_{j=1}^m \beta_j \int \varphi_j \mathrm{Im} (\nabla \varepsilon \overline{\varepsilon}).
\end{gather*}

We point out that because of the orthogonality condition \eqref{eq orthogonality}, the first order term of $\varepsilon$ would vanish if $R$ solved \eqref{eq ground state}. Unfortunately, as $R$ is an approximate solution, it does not solve \eqref{eq ground state}, and the error is too big so that one cannot proceed in this way. 

Therefore, we will directly prove the following two estimates on $\G$. The first one states the positiveness of $\G$, which follows because of orthogonality \eqref{eq orthogonality}. The second follows by a direct calculation and gives an estimate on the upper bound of $\G$. 

\begin{prop} \label{prop coercivity}
Let $N \ge 2$. For $T_0=T_0(N)$ large enough, there exists $c_0>0$ such that
\begin{equation}
    \G(\varepsilon(t)) \ge c_0 \Vert \varepsilon \Vert_{H^1}^2, \quad \forall t \in [T_*,T_n].
\end{equation}
\end{prop}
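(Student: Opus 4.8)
The plan is to localize $\G$ to the individual bubbles via the partition $\sum_j\varphi_j\equiv 1$, transport each piece to the frame of the corresponding soliton where it becomes (up to lower-order errors) the quadratic form of the linearized operators $L_\pm$, and then invoke coercivity of $L_\pm$ modulo their kernels.

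\textit{Localization.} Set $\varepsilon_j=\sqrt{\varphi_j}\,\varepsilon$. Using $\sum_j\varphi_j\equiv 1$, hence $\sum_j\sqrt{\varphi_j}\,\nabla\sqrt{\varphi_j}=0$, together with the cutoff bounds \eqref{eq cutoff}, one has the IMS-type identity $\int|\nabla\varepsilon|^2=\sum_j\int|\nabla\varepsilon_j|^2+O(a^{-2})\|\varepsilon\|_{L^2}^2$, and likewise $\int\varphi_j|\varepsilon|^2=\|\varepsilon_j\|_{L^2}^2$ and $\int\varphi_j\,\mathrm{Im}(\nabla\varepsilon\,\overline\varepsilon)=\mathrm{Im}\int\nabla\varepsilon_j\,\overline{\varepsilon_j}+O(a^{-1})\|\varepsilon\|_{L^2}^2$. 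In the potential terms of $\G_1$ one replaces $R$ by $R_j$ on $\mathrm{supp}\,\varphi_j$: by \eqref{eq localization of R_j and cutoff}, \eqref{eq decay of R_j} and Lemma \ref{lem localization}, every contribution coupling two distinct bubbles is $O_N(e^{-ca})$ or $O(a^{-1})\|\varepsilon\|_{H^1}^2$, while the cubic term $2\int\phi_{\mathrm{Re}(\varepsilon\overline R)}|\varepsilon|^2$ and the quartic term $\tfrac12\int|\nabla\phi_{|\varepsilon|^2}|^2$ are $O(\|\varepsilon\|_{H^1}^3)$ by Sobolev and Hardy--Littlewood--Sobolev. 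Hence $\G(\varepsilon)=\sum_{j=1}^m\G^{(j)}(\varepsilon_j)+O_N(a^{-1})\|\varepsilon\|_{H^1}^2+O(\|\varepsilon\|_{H^1}^3)$, where $\G^{(j)}$ is the one-bubble form built only from $R_j$, $V_j$, $\lambda_j$, $\beta_j$.

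\textit{Passage to the bubble frame.} Set $w_j=g_j^{-1}\varepsilon_j$. Expanding $\G^{(j)}$ with \eqref{eq basic calculation}, in particular $\nabla(g_jw_j)=\lambda_j^{-3}(\nabla w_j+i\lambda_j\beta_jw_j)e^{-i\gamma_j+i\beta_j\cdot x}$ and $\phi_{|g_jw_j|^2}(x)=\lambda_j^{-2}\phi_{|w_j|^2}\big((x-\alpha_j)/\lambda_j\big)$, the term $\int|\nabla\varepsilon_j|^2$ produces cross terms proportional to $\beta_j\,\mathrm{Im}\int\nabla w_j\overline{w_j}$ and a term proportional to $|\beta_j|^2\|w_j\|_{L^2}^2$; these are cancelled exactly by the momentum term $\G_3$ and by the $|\beta_j|^2$ part of $\G_2$, while the $\lambda_j^{-2}$ part of $\G_2$ supplies the missing zeroth-order term. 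Using $V_j=Q+O_{H^1}(a^{-1})$ and that the $\psi^{(1)}$ corrections in $\phi_{|R|^2}$ are $O(a^{-1})$, one arrives at
\[
\G^{(j)}(\varepsilon_j)=\frac{1}{\lambda_j^{3}}\Big[\big\langle L_+\,\mathrm{Re}\,w_j,\,\mathrm{Re}\,w_j\big\rangle+\big\langle L_-\,\mathrm{Im}\,w_j,\,\mathrm{Im}\,w_j\big\rangle\Big]+O_N\!\Big(\tfrac1a\Big)\|w_j\|_{H^1}^2 .
\]

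\textit{Orthogonality and coercivity.} Transporting \eqref{eq orthogonality} through $w_j=g_j^{-1}\varepsilon_j$, using $\varphi_j\equiv1$ near $\alpha_j$ and $V_j=Q+O_{H^1}(a^{-1})$, yields $|(\mathrm{Re}\,w_j,Q)|+|(\mathrm{Re}\,w_j,yQ)|+|(\mathrm{Im}\,w_j,\Lambda Q)|+|(\mathrm{Im}\,w_j,\nabla Q)|\le C_N a^{-1}\|\varepsilon\|_{H^1}$. I then use the coercivity lemma: there is $c>0$ such that $\eta_1\perp\{Q,yQ\}$ and $\eta_2\perp\{\Lambda Q,\nabla Q\}$ imply $\langle L_+\eta_1,\eta_1\rangle+\langle L_-\eta_2,\eta_2\rangle\ge c(\|\eta_1\|_{H^1}^2+\|\eta_2\|_{H^1}^2)$. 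This follows from $\ker L_+=\mathrm{span}\{\nabla Q\}$, $\ker L_-=\mathrm{span}\{Q\}$ (Thm.\ 4 of \cite{Lenzmanngroundstate}), from $L_-\ge0$ and $L_+\ge0$ on $\{Q\}^\perp$ (subcriticality, equivalently orbital stability \cite{orbitalstability}, with $L_+^{-1}Q=-\tfrac12\Lambda Q$), and from the nondegenerate pairings $(\partial_lQ,x_kQ)=-\tfrac12\delta_{lk}\|Q\|_{L^2}^2$ and $(\Lambda Q,Q)=\tfrac12\|Q\|_{L^2}^2\neq0$, which ensure the two auxiliary conditions kill $\ker L_+$ and $\ker L_-$; a standard compactness argument then upgrades nonnegativity to strict coercivity. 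Applying this to $w_j$, absorbing the $O(a^{-1}\|\varepsilon\|_{H^1})$ orthogonality defect into the constant, and summing over $j$ using $\lambda_j\sim1$ and $\sum_j\|w_j\|_{H^1}^2\gtrsim\sum_j\|\varepsilon_j\|_{H^1}^2\ge\|\varepsilon\|_{H^1}^2$, we get $\G(\varepsilon)\ge c'\|\varepsilon\|_{H^1}^2-O_N(a^{-1})\|\varepsilon\|_{H^1}^2-O(\|\varepsilon\|_{H^1}^3)\ge c_0\|\varepsilon\|_{H^1}^2$ for $T_0(N)$ large, since then $a\sim t$ is large and $\|\varepsilon\|_{H^1}\le t^{-N/4}$ is small by \eqref{eq bootstrap assumption}.

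The main obstacle is the exact cancellation in the second step: one must check that the truncated mass and momentum corrections in $\E$ are calibrated so that, in each bubble frame, all $\beta_j$- and $\lambda_j$-dependence disappears and the static operator $L_\pm$ around $Q$ is reconstituted, for otherwise the $\beta_j$ terms are not lower order. The second delicate point is the coercivity lemma, where the orthogonality is against $\{yQ,\Lambda Q\}$ rather than the more familiar $\{\nabla Q,Q\}$, so the pairings above must be verified nonzero; everything else is bookkeeping of exponentially small, $a^{-1}$, and cubic errors.
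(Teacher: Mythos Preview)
Your proof is correct and follows essentially the same approach as the paper: localize via $\varepsilon_j=\sqrt{\varphi_j}\,\varepsilon$, transport to the bubble frame where the $\beta_j$- and $\lambda_j$-dependence cancels and the quadratic form becomes $\lambda_j^{-3}[(L_+\mathrm{Re}\,w_j,\mathrm{Re}\,w_j)+(L_-\mathrm{Im}\,w_j,\mathrm{Im}\,w_j)]$, then invoke coercivity of $L_\pm$ using the nondegenerate pairings $(\nabla Q,xQ)$ and $(\Lambda Q,Q)$ to kill the kernels. The only organizational difference is that the paper introduces an intermediate truncated functional $\H_{j,\varphi}(\varepsilon)$ (with $\varepsilon$ unlocalized but the mass and momentum cutoffs in place) and compares it to $\H_j(\varepsilon_j)$, whereas you go directly from $\G(\varepsilon)$ to $\sum_j\G^{(j)}(\varepsilon_j)$; both routes handle the same $O(a^{-1})$ and $O(e^{-ca})$ cross terms.
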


\begin{prop} \label{prop estimate on G(epsilon)}
Let $N \ge 2$. For $T_0=T_0(N)$ large enough, if \eqref{eq bootstrap assumption} holds, then there exists $C>0$ such that
\begin{equation}
     \left| \frac{\d}{\dt} \G(\varepsilon(t)) \right| \le C t^{-1- \frac{N}{2}}, \quad \forall t \in [T_*,T_n].
\end{equation}
\end{prop}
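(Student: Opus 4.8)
The plan is to differentiate $\G(\varepsilon(t))$ in time and track every contribution, showing each decays at least like $t^{-1-N/2}$. We write $\G = \G_1 + \G_2 + \G_3$ as above and treat the three pieces together, since the time derivative mixes them. There are three sources of time dependence: (i) the explicit dependence of the coefficients $\lambda_j$, $\beta_j$ and of the cutoffs $\varphi_j$ on $t$; (ii) the dependence of $R$ on $t$ through the modulation parameters $g$; and (iii) the evolution of $\varepsilon$ governed by \eqref{eq equation of epsilon}. The key structural fact is that, because of the particular (truncated conserved-quantity) choice of $\E$, the leading quadratic form is $\frac{\d}{\dt}$-invariant up to errors, so that when we substitute the equation for $\partial_t \varepsilon$ the ``bad'' terms — those only linear in $\varepsilon$ — are multiplied by the inhomogeneity $\Psi + \sum_j \lambda_j^{-4} S_j e^{-i\gamma_j + i\beta_j\cdot x}$ of \eqref{eq equation of epsilon} rather than by $\varepsilon$ itself.

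Concretely, the first step is the algebra: expand $\frac{\d}{\dt}\G(\varepsilon)$ and collect terms by their $\varepsilon$-order. The terms quadratic and higher in $\varepsilon$ coming from (iii), after integrating by parts, reduce via \eqref{eq equation of epsilon} to integrals of $\varepsilon$ against $\N(\varepsilon)$, which by \eqref{eq estimate of N(epsilon)} is $O_N(\Vert\varepsilon\Vert_{H^1}^3)$, hence $O(t^{-3N/4})$ by \eqref{eq bootstrap assumption}; and to commutator terms where $\partial_t\varphi_j$, $\nabla\varphi_j$ or $\dot\lambda_j,\dot\beta_j$ appear, which carry a factor $O(1/a) = O(1/t)$ or $O(Mod + 1/a^2)$ from \eqref{eq asymptotic behavior of hyperbolic trajectory} and \eqref{eq estimate of Mod}, giving $O(t^{-1}\Vert\varepsilon\Vert_{H^1}^2) = O(t^{-1-N/2})$ — this is exactly the borderline term dictating the exponent in the statement. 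The terms from (ii), involving $\dot g$, similarly produce $Mod \cdot \Vert\varepsilon\Vert_{H^1}$ or $\Vert\dot g\Vert\cdot\Vert\varepsilon\Vert_{H^1}^2$ factors, again controlled by \eqref{eq estimate of Mod} and \eqref{eq bootstrap assumption}. The truly linear-in-$\varepsilon$ terms are the ones to watch: they come from the interaction of $\partial_t\varepsilon$ with the $R$-dependent pieces of $\G_1$ (namely $\phi_{|R|^2}|\varepsilon|^2$, $\phi_{\mathrm{Re}(\varepsilon\overline R)}|\varepsilon|^2$, $|\nabla\phi_{\mathrm{Re}(\varepsilon\overline R)}|^2$) and with $\G_2,\G_3$, together with the $\partial_t R$ pieces. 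Substituting \eqref{eq equation of epsilon}, the $\Delta\varepsilon$ and $\phi$-in-$R$ contributions are arranged to cancel against the $\partial_t R$ and coefficient-derivative contributions up to an admissible error of degree $\ge 2$ — this cancellation is precisely the reason $\E$ was defined with the extra $\varphi_j$-weighted mass and momentum terms. What survives is $\mathrm{Re}\int(\Psi + \sum_j \lambda_j^{-4}S_j e^{\cdots})\,\overline{(\text{something linear in }\varepsilon\text{ or in }R)}$. Using \eqref{eq estimate of Psi} we have $\Vert\Psi\Vert_{L^2}\lesssim a^{-(N+1)}\lesssim t^{-N-1}$, and using \eqref{eq estimate of S_j with Mod} and \eqref{eq estimate of Mod} we get $\Vert S_j\Vert_{L^2}\lesssim Mod \lesssim t^{-N/4}$; pairing with $\Vert\varepsilon\Vert_{H^1}\lesssim t^{-N/4}$ gives $O(t^{-N/2})$, and pairing the $S_j$ term with the $O(1)$-sized $R$-dependent factor is the only place where we must be careful — here the orthogonality conditions \eqref{eq orthogonality} kill the leading (degree-$0$ in the admissible sense, i.e. involving $V_j, \nabla V_j, \Lambda V_j, y_jV_j$) part of that factor, leaving a gain of $1/a$ so that this term too is $O(Mod/a) = O(t^{-N/4-1})$, which is better than $t^{-1-N/2}$ once $N$ is large. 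Summing all contributions yields the bound.

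The main obstacle I expect is the bookkeeping in the linear-in-$\varepsilon$ cancellation: one must verify that, after substituting \eqref{eq equation of epsilon} and integrating by parts, the collection of terms of the form ``$\varepsilon$ paired with a fixed profile'' either cancels identically against a $\partial_t R$ or $\dot\lambda_j,\dot\beta_j,\partial_t\varphi_j$ term, or else pairs $\varepsilon$ against something orthogonal to it by \eqref{eq orthogonality} (modulo an admissible error of high degree). This is the place where the precise choice of the truncated energy $\E$, and the precise form of the modulation conditions, are used, and it is entirely parallel to — though more involved than — the corresponding step in \cite{KMR2bodyHartree} and \cite{KsolitarywavesofNLS}. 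Once that cancellation is in hand, everything else is a routine application of \eqref{eq estimate of Psi}, \eqref{eq estimate of S_j with Mod}, \eqref{eq estimate of Mod}, \eqref{eq estimate of N(epsilon)}, Lemma \ref{lem localization} and the bootstrap bounds \eqref{eq bootstrap assumption}, and the worst surviving term is the $t^{-1}\Vert\varepsilon\Vert_{H^1}^2$ commutator from the cutoffs, which gives exactly the claimed exponent $-1-N/2$.
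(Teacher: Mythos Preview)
Your overall strategy matches the paper's: differentiate $\G=\G_1+\G_2+\G_3$, substitute \eqref{eq equation of epsilon} for $\partial_t\varepsilon$, exhibit the cancellation of the leading quadratic-in-$\varepsilon$ terms coming from $\partial_t R$ in $\G_1$ against those produced by $\G_2$ and $\G_3$, and bound the remainder. The paper carries this out by treating the three pieces separately, arriving at explicit main terms (equations \eqref{eq estimate of G_1}--\eqref{eq estimate of G_3}) which visibly cancel upon summation; the dominant surviving error is indeed the cutoff commutator $O(t^{-1}\Vert\varepsilon\Vert_{H^1}^2)=O(t^{-1-N/2})$, exactly as you say.

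There are, however, two concrete errors in your account of the ``linear-in-$\varepsilon$'' terms. First, the scenario you describe --- $S_j$ paired against an $O(1)$-sized $R$-dependent factor, requiring the orthogonality conditions \eqref{eq orthogonality} to gain a factor $1/a$ --- does not occur. Every term in $\G$ is at least quadratic in $\varepsilon$; differentiating in $t$ and replacing $\partial_t\varepsilon$ via \eqref{eq equation of epsilon} therefore always leaves at least one surviving factor of $\varepsilon$ next to $\Psi$ or $S_j$. Hence the $S_j$ contribution is $O\big(Mod\cdot\Vert\varepsilon\Vert_{H^1}\big)$, and by \eqref{eq estimate of Mod with epsilon} this is absorbed into $O\big(\Vert\varepsilon\Vert_{H^1}^2/a^2+\Vert\varepsilon\Vert_{H^1}/a^{N+1}+\Vert\varepsilon\Vert_{H^1}^3\big)$ without any appeal to orthogonality. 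The paper's proof of this proposition never uses \eqref{eq orthogonality}; orthogonality is used only for the coercivity (Proposition~\ref{prop coercivity}).

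Second, your arithmetic is off: $O(Mod/a)=O(t^{-N/4-1})$ is \emph{not} better than $t^{-1-N/2}$ for large $N$, since $N/4+1<N/2+1$. Had such a term actually been present with only that decay, the bound would fail. Fortunately it is not present, for the reason just given; but you should redo the exponent bookkeeping so that every surviving term is genuinely $O(t^{-1-N/2})$.
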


If taking these two results for granted, then by \eqref{eq value of epsilon at T_n}, we have
\begin{equation}
    c_0 \Vert \varepsilon \Vert_{H^1}^2 \le |\G(\varepsilon(t))| \le \int_t^{T_n} C \tau^{-1- \frac{N}{2}} \d \tau \le \frac{C}{N} t^{-\frac{N}{2}}.
\end{equation}
Taking $N$ large enough, then we obtain the first line of \eqref{eq bootstrap conclusion}.

The rest of this subsection is for the proof of the two propositions. Once they are proved, we will have completed the proof of the hyperbolic case.

\begin{proof} [Proof of Proposition \ref{prop coercivity}] \ \par
The main ingredient of the proof is the following coercivity result on the linearized operators $L_-$ and $L_+$.

\begin{lem} \label{lem coercivity of L+, L-}
There exist $\delta,c>0$ such that: if $v \in H^1$ is real-valued, then
\begin{equation} \begin{gathered} 
    |(v,Q)|+ |(v,xQ)|< \delta \Vert v \Vert_{H^1} \implies (L_+v,v) \ge c \Vert v \Vert_{H^1}^2, \\
    |(v, \Lambda Q)|< \delta \Vert v \Vert_{H^1} \implies (L_-v,v) \ge c \Vert v \Vert_{H^1}^2.
\end{gathered} \end{equation}
\end{lem}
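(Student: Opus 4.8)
The plan is to reduce the stated near-orthogonality coercivity to the classical coercivity of $L_{\pm}$ under \emph{exact} orthogonality to their kernels (plus one Vakhitov--Kolokolov-type scalar constraint for $L_+$), and then to absorb the change of orthogonality basis and the $\delta$-error by elementary finite-dimensional linear algebra.

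First I would record the spectral picture of the operators linearized at $Q$, available from \cite{Lenzmanngroundstate}: $L_-\ge 0$ with $\ker L_-=\mathrm{span}\{Q\}$ and a spectral gap above $0$, while $L_+$ has exactly one negative eigenvalue, $\ker L_+=\mathrm{span}\{\partial_1Q,\partial_2Q,\partial_3Q\}$, and is positive with a gap on the orthogonal complement of the sum of its negative eigenspace and its kernel. I would also record the algebraic identities obtained by differentiating \eqref{eq ground state}: $L_+(\Lambda Q)=-2Q$ (already used in the proof of Proposition \ref{prop construction of approximate bubbles}) and $L_-(x_jQ)=-2\partial_jQ$. Together with the standard abstract coercivity lemma (as used in \cite{KMR2bodyHartree}), these yield the classical estimates $(L_-w,w)\ge c\Vert w\Vert_{H^1}^2$ whenever $(w,Q)=0$, and $(L_+w,w)\ge c\Vert w\Vert_{H^1}^2$ whenever $(w,Q)=(w,\nabla Q)=0$; for the latter one checks the required sign $(L_+^{-1}Q,Q)=-\tfrac12(\Lambda Q,Q)=-\tfrac14\Vert Q\Vert_{L^2}^2<0$, using $(\Lambda Q,Q)=\tfrac12\Vert Q\Vert_{L^2}^2$. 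In both cases the passage from an $L^2$ lower bound to an $H^1$ lower bound is routine, since the potential terms in $L_{\pm}$ are lower order with respect to $-\Delta+1$.

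Next I would change the orthogonality basis, the point being that the hypotheses $(v,\Lambda Q)=0$ (for $L_-$) and $(v,Q)=(v,x_jQ)=0$ (for $L_+$) cut out the same subspaces as the kernel conditions modulo the kernels themselves. This uses $(\Lambda Q,Q)=\tfrac12\Vert Q\Vert_{L^2}^2\neq 0$, the invertibility of the matrix $(x_jQ,\partial_kQ)=-\tfrac12\delta_{jk}\Vert Q\Vert_{L^2}^2$, and the vanishing of the cross pairings $(x_jQ,Q)$ and $(\Lambda Q,\partial_kQ)$ by parity. Concretely, given $v$ with $(v,\Lambda Q)=0$, write $v=w+\mu Q$ with $(w,Q)=0$; then $|\mu|\lesssim\Vert w\Vert_{H^1}$, and since $L_-Q=0$ one gets $(L_-v,v)=(L_-w,w)$ and $\Vert v\Vert_{H^1}\sim\Vert w\Vert_{H^1}$, so the classical bound for $w$ transfers to $v$. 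Given $v$ with $(v,Q)=(v,x_jQ)=0$, write $v=w+\sum_k a_k\partial_kQ$ with $(w,\nabla Q)=0$; the conditions $(v,x_jQ)=0$ and invertibility of the Gram matrix force $|a_k|\lesssim\Vert w\Vert_{H^1}$, while $(v,Q)=0$ and $(\partial_kQ,Q)=0$ force $(w,Q)=0$; then $L_+\partial_kQ=0$ gives $(L_+v,v)=(L_+w,w)$ and $\Vert v\Vert_{H^1}\sim\Vert w\Vert_{H^1}$, and again the classical bound transfers. (Only the $\Lambda Q$ part of the hypothesis is needed for $L_-$, since $\ker L_-$ is one-dimensional.)

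Finally, the near-orthogonality version follows by the usual projection argument: decompose $v=\tilde v+Pv$, where $P$ is the orthogonal projection onto $\mathrm{span}\{\Lambda Q\}$ (resp. $\mathrm{span}\{Q,x_1Q,x_2Q,x_3Q\}$), so that $\Vert Pv\Vert_{H^1}\lesssim\delta\Vert v\Vert_{H^1}$ and $\tilde v$ is exactly orthogonal; apply the previous step to $\tilde v$ and expand $(L_{\pm}v,v)=(L_{\pm}\tilde v,\tilde v)+O(\delta\Vert v\Vert_{H^1}^2)$, using boundedness of $L_{\pm}: H^1\to H^{-1}$, and choose $\delta$ small. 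The main obstacle is the $L_+$ part: one must genuinely invoke the precise spectral count for $L_+$ of the Hartree ground state and verify the Vakhitov--Kolokolov sign $(L_+^{-1}Q,Q)<0$, which is what makes the constrained quadratic form positive despite the negative eigenvalue; the $L_-$ part and all the basis changes are bookkeeping.
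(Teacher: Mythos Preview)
Your proof is correct, but it follows a different route from the paper's argument for $L_+$. The paper does not invoke the abstract Vakhitov--Kolokolov framework or the fact that $L_+$ has exactly one negative eigenvalue. Instead, it first uses the variational characterization of $Q$ as a minimizer of the Hamiltonian at fixed mass: computing the second variation of the mass-normalized functional $\mathcal{J}$ shows directly that $(L_+f,f)\ge 0$ whenever $(f,Q)=0$. Strict positivity under the additional constraint $(f,xQ)=0$ is then obtained by a compactness argument: if a minimizing sequence had $(L_+f_n,f_n)\to 0$, a weak limit $f_0$ would satisfy $L_+f_0=aQ$, hence $f_0\in\mathrm{span}\{\Lambda Q,\nabla Q\}$ by $L_+(\Lambda Q)=-2Q$ and the kernel description, contradicting $(f_0,Q)=(f_0,xQ)=0$. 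The near-orthogonality reduction is the same projection step you use.

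In terms of trade-offs: your approach is more modular and makes transparent that the $(v,\nabla Q)$ hypothesis is actually superfluous for $L_-$, but it requires as input the precise Morse index of $L_+$ (which is in \cite{Lenzmanngroundstate} but is a nontrivial fact). The paper's approach needs only the kernel description of $L_+$ and the minimizing property of $Q$, at the cost of a short but ad hoc compactness step; it also works directly with the pair $\{Q,xQ\}$ and so avoids your basis-change computation entirely.
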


\begin{proof}
All functions in this proof are assumed to be real-valued.

It suffices to prove that for some $c>0$, if $v \in H^1$, then
\begin{equation} \label{eq coercivity of L+, L-} \begin{gathered}
    (v,Q)= (v,xQ)= 0 \implies (L_+v,v) \ge c \Vert v \Vert_{H^1}^2, \\
    (v, \Lambda Q)= 0 \implies (L_-v,v) \ge c \Vert v \Vert_{H^1}^2.
\end{gathered} \end{equation}
Let us only prove the sufficiency for the estimate on $L_+$. For $v \in H^1$, we have
\begin{equation}
    (L_+v, v)= \int |\nabla v|^2+ \int |v|^2+ \int \phi_{Q^2} v^2- 2\int |\nabla \phi_{Qv}|^2.
\end{equation}
If $u=v-w$, then by the Sobolev inequality, we have
\begin{equation}
     (L_+v,v)- (L_+u,u) \ge -C \Vert u \Vert_{H^1} \Vert w \Vert_{H^1}- C\Vert w \Vert_{H^1}^2. 
\end{equation}
We take
\begin{equation}
    w= \frac{(v,Q)}{\Vert Q \Vert_{L^2}^2} Q+ \frac{(v,xQ)}{\Vert xQ \Vert_{L^2}^2} \cdot xQ.
\end{equation}
Then $\Vert w \Vert_{H^1} \le C\delta \Vert v \Vert_{H^1}$, and $u$ is orthogonal to both $Q$ and $xQ$, so we have $(L_+u,u) \ge c \Vert u \Vert_{H^1}^2$. We thus deduce $(L_+v,v) \ge \frac{c}{2} \Vert v \Vert_{H^1}^2$ by Cauchy-Schwarz and taking $\delta$ small. 

We then turn to prove \eqref{eq coercivity of L+, L-}. We only prove the first line, as the proof of the second line is similar and easier. 

Recall that $Q$ is a minimizer of 
\begin{equation}
    \H(u)= \frac{1}{2} \int |\nabla u|^2- \frac{1}{4} \int |\nabla \phi_{|u|^2} |^2, \quad \text{where } u \in H^1 \text{ and } \Vert u \Vert_{L^2}= \Vert Q \Vert_{L^2}.
\end{equation}
Thus, $Q$ is a minimizer in $H^1 \backslash \{0\}$ of
\begin{equation}
    \J(u):= \frac{\Vert Q \Vert_{L^2}^2}{\Vert u \Vert_{L^2}^2} \int |\nabla u|^2- \frac{\Vert Q \Vert_{L^2}^4}{2\Vert u \Vert_{L^2}^4} \int |\nabla \phi_{|u|^2} |^2.
\end{equation}
Assume $f \in H^1$ and $(f,Q)=0$. By direct computation, we have
\begin{equation} \begin{aligned}
    \frac{1}{2} \frac{\d^2}{\d \epsilon^2} \bigg|_{\epsilon=0} \J(Q+ \epsilon f) = & \int |\nabla f|^2- \frac{\Vert \nabla Q \Vert_{L^2}^2}{\Vert Q \Vert_{L^2}^2} \int f^2 \\
    &- 2\int |\nabla \phi_{Qf}|^2- \int \nabla \phi_{Q^2} \cdot \nabla \phi_{f^2}+ \frac{\Vert \nabla \phi_{Q^2} \Vert_{L^2}^2}{\Vert Q \Vert_{L^2}^2} \int f^2.
\end{aligned} \end{equation}
Using \eqref{eq ground state} and integration by parts, we get
\begin{equation}
    \Vert \nabla Q \Vert_{L^2}^2- \Vert \nabla \phi_{Q^2} \Vert_{L^2}^2+ \Vert Q \Vert_{L^2}^2=0.
\end{equation}
Thus we obtain
\begin{equation}
    \frac{1}{2} \frac{\d^2}{\d \epsilon^2} \bigg|_{\epsilon=0} \J(Q+ \epsilon f) = (L_+f,f).
\end{equation}
By the minimality of $\J(Q)$, we deduce that 
\begin{equation}
    f \in H^1,\ (f,Q)=0 \implies (L_+f,f) \ge 0. 
\end{equation}

Therefore, if the conclusion fails to be true, then there exist $f_n \in H^1$ such that $(f_n,Q)= (f_n,xQ)=0$, $\Vert f_n \Vert_{H^1}=1$ and $(L_+ f_n, f_n) \to 0$. By passing to subsequence, we may assume $f_n \rightharpoonup f_0$ in $H^1$. We have $(f_0,Q)= (f_0,xQ)=0$, and by the Rellich-Kondrachov theorem and the decay of $Q$, we have
\begin{equation}
    \int \phi_{Q^2} f_n^2 \to \int \phi_{Q^2} f_0^2 \quad \text{and} \quad \int |\nabla \phi_{Qf_n}|^2 \to \int |\nabla \phi_{Qf_0}|^2.
\end{equation}
On the other hand, $\Vert f_0 \Vert_{H^1} \le \liminf \Vert f_n \Vert_{H^1}$. We deduce that $(L_+f_0, f_0) \le 0$.

By the non-negativity of $L^+$ on $Q^\perp$, we know that $f_0$ is a nonzero minimizer of $(L_+u,u)$, where $u \in H^1$ and $(u,Q)=0$. By computing the first variation, we get
\begin{equation}
    L_+f_0= aQ \quad \text{for some } a \in \mathbb{R}.
\end{equation}
Since $L_+(\Lambda Q)= -2Q$ and $\ker L_+$ is spanned by $\nabla Q$ (Theorem 4 in \cite{Lenzmanngroundstate}), we know $f_0$ is a linear combination of $\Lambda Q$ and $\nabla Q$. But using $(f_0,Q)= (f_0,xQ)=0$, we must have $f_0=0$, which is a contradiction.
\end{proof}

Let $\varepsilon_j= \varepsilon \sqrt{\varphi_j}$ and $\tilde{\varepsilon}_j= g_j^{-1} \varepsilon_j$, or more precisely, define
\begin{equation}
    \tilde{\varepsilon}_j(t,y_j)= \lambda_j^2(t) \varepsilon_j(t, \lambda_j(t)y_j+ \alpha_j(t)) e^{i\gamma_j(t)- i\beta_j(t) \cdot (\lambda_j(t)y_j+ \alpha_j(t))}.
\end{equation}
By \eqref{eq orthogonality}, \eqref{eq estimate of admissible functions} and \eqref{eq asymptotic behavior of hyperbolic trajectory}, for $t$ large enough, we can apply Lemma \ref{lem coercivity of L+, L-} to $\mathrm{Re}( \tilde{\varepsilon}_j)$ and $\mathrm{Im}( \tilde{\varepsilon}_j)$. Thus for $T_0(N)$ large enough, we have
\begin{equation}
    \Big( L_+ \mathrm{Re}(\tilde{\varepsilon}_j), \mathrm{Re}(\tilde{\varepsilon}_j) \Big)+ \Big( L_- \mathrm{Im}(\tilde{\varepsilon}_j), \mathrm{Im}(\tilde{\varepsilon}_j) \Big) \ge c \Vert \tilde{\varepsilon}_j \Vert_{H^1}^2, \quad \forall t\ge T_0.
\end{equation}
In the following, we always assume $T_0$ is large enough so that the above holds.

Set $Q_j= g_j Q$. By computation similar to \eqref{eq basic calculation} and \eqref{eq estimate of admissible functions}, we have
\begin{equation} \begin{aligned}
    &\Big( L_+ \mathrm{Re}(\tilde{\varepsilon}_j), \mathrm{Re}(\tilde{\varepsilon}_j) \Big)+ \Big( L_- \mathrm{Im}(\tilde{\varepsilon}_j), \mathrm{Im}(\tilde{\varepsilon}_j) \Big) \\
    = &\ \int |\nabla \tilde{\varepsilon}_j|^2+ \int |\tilde{\varepsilon}_j|^2+ \int \phi_{Q^2} |\tilde{\varepsilon}_j|^2+ 2\int \phi_{\mathrm{Re}(Q \tilde{\varepsilon}_j)} \mathrm{Re}(Q \tilde{\varepsilon}_j) \\
    = &\ \lambda_j^3 \left( \int |\nabla \varepsilon_j|^2- 2\beta_j \int \mathrm{Im} (\nabla \varepsilon_j \overline{\varepsilon_j})+ |\beta_j|^2 \int |\varepsilon_j|^2 \right)+ \lambda_j \int |\varepsilon_j|^2 \\
    &+ \lambda_j^3 \int \phi_{Q_j^2} |\varepsilon_j|^2+ 2\lambda_j^3 \int \phi_{\mathrm{Re}(\varepsilon_j \overline{Q_j})} \mathrm{Re} (\varepsilon \overline{Q_j}) \\
    = &\ \lambda_j^3 \Bigg( \int |\nabla \varepsilon_j|^2+ \int \phi_{|R_j|^2} |\varepsilon_j|^2- 2\int \big| \nabla \phi_{\mathrm{Re}(\varepsilon_j \overline{{R_j}})} \big|^2 \\
    &\qquad \ + \Big( \frac{1}{\lambda_j^2}+ |\beta_j|^2 \Big) \int |\varepsilon_j|^2- 2\beta_j \int \mathrm{Im} (\nabla \varepsilon_j \overline{\varepsilon_j}) \Bigg)+ O_N \left( \frac{\Vert \varepsilon_j \Vert_{H^1}^2}{a} \right)
\end{aligned} \end{equation}
We then deduce that 
\begin{equation} \label{eq coercivity of H_j} 
    \H_j(\varepsilon_j) \ge c \Vert \varepsilon_j \Vert_{H^1}- \frac{C_N}{a} \Vert \varepsilon_j \Vert_{H^1}^2 \ge c\int \varphi_j ( |\nabla \varepsilon|^2+ |\varepsilon|^2)- \frac{C_N}{a} \Vert \varepsilon \Vert_{H^1}^2,
\end{equation}
where
\begin{equation} \begin{aligned}
    \H_j(\varepsilon_j) = &\ \int |\nabla \varepsilon_j|^2+ \int \phi_{|R_j|^2} |\varepsilon_j|^2- 2\int \big| \nabla \phi_{\mathrm{Re}(\varepsilon_j \overline{{R_j}})} \big|^2 \\
    &\ + \Big( \frac{1}{\lambda_j^2}+ |\beta_j|^2 \Big) \int |\varepsilon_j|^2- 2\beta_j \int \mathrm{Im} (\nabla \varepsilon_j \overline{\varepsilon_j}).
\end{aligned} \end{equation}

Next, we consider the truncated functional
\begin{equation} \begin{aligned}
    \H_{j,\varphi}(\varepsilon)= &\int \varphi_j |\nabla \varepsilon|^2+ \int \phi_{|R_j|^2} |\varepsilon|^2- 2\int \big| \nabla \phi_{\mathrm{Re}(\varepsilon \overline{{R_j}})} \big|^2 \\
    &\ + \Big( \frac{1}{\lambda_j^2}+ |\beta_j|^2 \Big) \int \varphi_j |\varepsilon|^2- 2\beta_j \int \varphi_j \mathrm{Im} (\nabla \varepsilon \overline{\varepsilon}).
\end{aligned} \end{equation}
By \eqref{eq cutoff}, \eqref{eq localization of R_j and cutoff} and \eqref{eq coercivity of H_j}, we have
\begin{equation} \label{eq coercivity of H_j,phi} \begin{aligned}
    \H_{j,\varphi}(\varepsilon)= &\int |\nabla \varepsilon_j|^2+ \int \phi_{|R_j|^2} |\varepsilon|^2- 2\int \big| \nabla \phi_{\mathrm{Re}(\varepsilon \overline{{R_j}})} \big|^2 \\
    &\ + \Big( \frac{1}{\lambda_j^2}+ |\beta_j|^2 \Big) \int |\varepsilon_j|^2- 2\beta_j \int \mathrm{Im} (\nabla \varepsilon_j \overline{\varepsilon_j})+ O \left( \frac{\Vert \varepsilon \Vert_{H^1}^2}{a} \right) \\
    = &\ \H_j(\varepsilon_j)+ \int (1-\sqrt{\varphi_j}) \phi_{|R_j|^2} |\varepsilon|^2- 2\int \big| \nabla \phi_{\mathrm{Re} \big( (1-\sqrt{\varphi_j}) \varepsilon R_j \big)} \big|^2 \\
    &\ + 4\int \phi_{\mathrm{Re}( \varepsilon_j R_j )} \mathrm{Re} \big( (1-\sqrt{\varphi_j}) \varepsilon R_j \big)+ O \left( \frac{\Vert \varepsilon \Vert_{H^1}^2}{a} \right) \\
    \ge &\ c\int \varphi_j ( |\nabla \varepsilon|^2+ |\varepsilon|^2)- \frac{C_N}{a} \Vert \varepsilon \Vert_{H^1}^2.
\end{aligned} \end{equation}

Finally, using the sum-to-1 property of $\varphi_j$, we write
\begin{equation} \begin{aligned}
    \G(\varepsilon)= &\ \sum_{j=1}^m \H_{j,\varphi}(\varepsilon)+ 2\int \phi_{\mathrm{Re} (\varepsilon \overline{R})} |\varepsilon|^2- \frac{1}{2} \int |\nabla \phi_{|\varepsilon|^2}|^2 \\
    &\ + \sum_{j \neq k} \int \phi_{\mathrm{Re} (R_k \overline{R_j})} |\varepsilon|^2- 2\sum_{j \neq k} \int \nabla \phi_{\mathrm{Re} (\varepsilon \overline{R_j})} \cdot \nabla \phi_{\mathrm{Re} (\varepsilon \overline{R_k})}. 
\end{aligned} \end{equation}
The first term is controlled by \eqref{eq coercivity of H_j,phi}. The other terms in the first line are $O \big( t^{-N/4} \Vert \varepsilon \Vert_{H^1}^2 \big)$ because of \eqref{eq bootstrap assumption}. Using Lemma \ref{lem localization}, we know the two terms in the second line are $O_N \big( e^{-ca}\Vert \varepsilon \Vert_{H^1}^2 \big)$ and $O_N \Big( \frac{\Vert \varepsilon \Vert_{H^1}^2}{a} \Big)$, respectively. We thus have
\begin{equation}
    \G(\varepsilon) \ge c \Vert \varepsilon \Vert_{H^1}^2- \frac{C_N}{a} \Vert \varepsilon \Vert_{H^1}^2.
\end{equation}
Thanks to \eqref{eq asymptotic behavior of hyperbolic trajectory}, we conclude by taking $T_0(N)$ large enough.
\end{proof}

\begin{proof} [Proof of Proposition \ref{prop estimate on G(epsilon)}]
We deal with $\G_1$, $\G_2$ and $\G_3$ separately.

(1) Using integration by parts, we have
\begin{equation} \begin{aligned}
    \frac{\d \G_1}{\dt}= &\ -2\mathrm{Im} \int i\partial_t \overline{\varepsilon} \left( \Delta \varepsilon- \phi_{|R|^2} \varepsilon- 2\phi_{\mathrm{Re} (\varepsilon \overline{R})} R- \N(\varepsilon) \right) \\
    &\ +4\mathrm{Re} \int \phi_{\mathrm{Re} (\varepsilon \overline{R})} \varepsilon \partial_t \overline{R}+ 2\int \phi_{\mathrm{Re} (\partial_t R \overline{R})} |\varepsilon|^2+ 2\mathrm{Re} \int \phi_{|\varepsilon|^2} \varepsilon \partial_t \overline{R}.
\end{aligned} \end{equation}
By \eqref{eq equation of epsilon}, \eqref{eq estimate of Psi}, \eqref{eq estimate of S_j with Mod} and \eqref{eq estimate of N(epsilon)}, the first line is $O_N \left( \big( \frac{1}{a^{N+1}}+ Mod \big) \Vert \varepsilon \Vert_{H^1} \right)$. For the second line, using \eqref{eq basic calculation}, \eqref{eq asymptotic behavior of hyperbolic trajectory} and \eqref{eq estimate of M_j, B_j}, we have
\begin{equation} \begin{aligned}
    \partial_t R_j &= \frac{1}{\lambda_j^2} \left( -\frac{\dot{\alpha}_j}{\lambda_j} \cdot \nabla V_j- i(\dot{\gamma}_j- \dot{\beta}_j \cdot x) V_j \right) e^{-i\gamma_j+ i\beta \cdot x}+ O_N \left( \frac{1}{a^2} \right) e^{-c_N|x-\alpha_j|} \\
    &= -2\beta_j \cdot \nabla R_j+ i\big( \frac{1}{\lambda_j^2}+ |\beta_j|^2 \big) R_j+ O_N \left( \frac{1}{a^2}+ Mod \right) e^{-c_N|x-\alpha_j|}.
\end{aligned} \end{equation}
Combining these with \eqref{eq estimate of Mod with epsilon}, \eqref{eq decay of R_j} and Lemma \ref{lem localization}, we get
\begin{equation} \label{eq estimate of G_1} \begin{aligned}
    \frac{\d \G_1}{\dt}= \sum_{j=1}^m \Bigg( 4\Big( & \frac{1}{\lambda_j^2}+ |\beta_j|^2 \Big) \int \phi_{\mathrm{Re}( \varepsilon \overline{R})} \mathrm{Im} (\varepsilon \overline{R_j})- 8\int \phi_{\mathrm{Re} (\varepsilon \overline{R})} \mathrm{Re} (\varepsilon \beta_j \cdot \nabla \overline{R_j}) \\
    &- 4\int \phi_{\mathrm{Re} (\beta_j \cdot \nabla R_j \overline{R_j})} |\varepsilon|^2 \Bigg)+ O_N \left( \frac{\Vert \varepsilon \Vert_{H^1}}{a^{N+1}}+ \frac{\Vert \varepsilon \Vert_{H^1}^2} {a^2}+ \Vert \varepsilon \Vert_{H^1}^3 \right).
\end{aligned} \end{equation} 

(2) Using \eqref{eq asymptotic behavior of hyperbolic trajectory} and \eqref{eq cutoff}, we have
\begin{equation}
    \frac{\d \G_2}{\dt}= \sum_{j=1}^m 2\Big( \frac{1}{\lambda_j^2}+ |\beta_j|^2 \Big) \int \varphi_j \mathrm{Im} (i\partial_t \varepsilon \overline{\varepsilon})+ O\left( \frac{\Vert \varepsilon \Vert_{H^1}^2}{t} \right). 
\end{equation}
Then by \eqref{eq equation of epsilon}, \eqref{eq estimate of Psi}, \eqref{eq estimate of S_j with Mod} and \eqref{eq estimate of N(epsilon)}, we have
\begin{equation} \begin{aligned}
    \frac{\d \G_2}{\dt} &= \sum_{j=1}^m 2\Big( \frac{1}{\lambda_j^2}+ |\beta_j|^2 \Big) \int \varphi_j \mathrm{Im} (2\phi_{\mathrm{Re}(\varepsilon \overline{R})} R \overline{\varepsilon})+ O\left( \frac{\Vert \varepsilon \Vert_{H^1}^2}{t} \right) \\
    &\quad + O_N \left( \frac{\Vert \varepsilon \Vert_{H^1}}{a^{N+1}}+ Mod \Vert \varepsilon \Vert_{H^1}+ \Vert \varepsilon \Vert_{H^1}^3 \right).
\end{aligned} \end{equation}
Finally using \eqref{eq estimate of Mod with epsilon} and \eqref{eq localization of R_j and cutoff}, we get
\begin{equation} \label{eq estimate of G_2} \begin{aligned}
    \frac{\d \G_2}{\dt} &= \sum_{j=1}^m- 4\Big( \frac{1}{\lambda_j^2}+ |\beta_j|^2 \Big) \int \phi_{\mathrm{Re}(\varepsilon \overline{R})} \mathrm{Im} (\varepsilon \overline{R_j})+ O\left( \frac{\Vert \varepsilon \Vert_{H^1}^2}{t} \right) \\
    &\quad + O_N \left( \frac{\Vert \varepsilon \Vert_{H^1}}{a^{N+1}}+ \frac{\Vert \varepsilon \Vert_{H^1}^2} {a^2}+ \Vert \varepsilon \Vert_{H^1}^3 \right).
\end{aligned} \end{equation} 

(3) Similarly, we can compute
\begin{equation} \label{eq estimate of G_3} \begin{aligned}
    \frac{\d \G_3}{\dt} &= \sum_{j=1}^m -4\beta_j \int \varphi_j \mathrm{Re}(i\partial_t \varepsilon \nabla \overline{\varepsilon})+ O\left( \frac{\Vert \varepsilon \Vert_{H^1}^2}{t} \right) \\
    &= \sum_{j=1}^m -4\beta_j \int \varphi_j \mathrm{Re} \left( 2\phi_{\mathrm{Re} (\varepsilon \overline{R})} R \nabla \overline{\varepsilon}+ \phi_{|R|^2} \varepsilon \nabla \overline{\varepsilon} \right)+ O\left( \frac{\Vert \varepsilon \Vert_{H^1}^2}{t} \right) \\
    &\quad + O_N \left( \frac{\Vert \varepsilon \Vert_{H^1}}{a^{N+1}}+ Mod \Vert \varepsilon \Vert_{H^1}+ \Vert \varepsilon \Vert_{H^1}^3 \right) \\
    &= \sum_{j=1}^m \left( 8\int \phi_{\mathrm{Re} (\varepsilon \overline{R})} \mathrm{Re}(\varepsilon \beta_j \varphi_j \cdot \nabla \overline{R})+ 4\int \phi_{\mathrm{Re} (\beta_j \varphi_j \cdot \nabla R \overline{R})} |\varepsilon|^2 \right) \\
    &\quad +O\left( \frac{\Vert \varepsilon \Vert_{H^1}^2}{t} \right)+ O_N \left( \frac{\Vert \varepsilon \Vert_{H^1}}{a^{N+1}}+ Mod \Vert \varepsilon \Vert_{H^1}+ \Vert \varepsilon \Vert_{H^1}^3 \right) \\
    &= \sum_{j=1}^m \left( 8\int \phi_{\mathrm{Re} (\varepsilon \overline{R})} \mathrm{Re}(\varepsilon \beta_j \cdot \nabla \overline{R_j})+ 4\int \phi_{\mathrm{Re} (\beta_j \cdot \nabla R_j \overline{R_j})} |\varepsilon|^2 \right) \\
    &\quad +O\left( \frac{\Vert \varepsilon \Vert_{H^1}^2}{t} \right)+ O_N \left( \frac{\Vert \varepsilon \Vert_{H^1}}{a^{N+1}}+ \frac{\Vert \varepsilon \Vert_{H^1}^2} {a^2}+ \Vert \varepsilon \Vert_{H^1}^3 \right).
\end{aligned} \end{equation}

Combining \eqref{eq estimate of G_1}, \eqref{eq estimate of G_2}, \eqref{eq estimate of G_3} together, we deduce
\begin{equation}
    \left| \frac{\d}{\dt} \G(\varepsilon(t)) \right| \le \frac{C\Vert \varepsilon \Vert_{H^1}^2}{t}+ C_N \left( \frac{\Vert \varepsilon \Vert_{H^1}}{a^{N+1}}+ \frac{\Vert \varepsilon \Vert_{H^1}^2} {a^2}+ \Vert \varepsilon \Vert_{H^1}^3 \right)
\end{equation}
Using \eqref{eq asymptotic behavior of hyperbolic trajectory}, \eqref{eq bootstrap assumption} and taking $T_0(N)$, $N$ large enough, we get the desired result.
\end{proof}

We have finished the proof of the hyperbolic case.

\section{The parabolic and the hyperbolic-parabolic case} \label{sec parabolic and hyperbolic-parabolic}

One of the difficulties of dealing with these two cases is to establish Proposition \ref{prop hyperbolic trajectory}. Due to the lower rates of expansion, we need more delicate computation.

\subsection{The approximate trajectory}

The goal of this subsection is to prove the following analog of Proposition \ref{prop hyperbolic trajectory}.

\begin{prop} \label{prop parabolic and hyperbolic-parabolic trajectory}
Let $P^{\infty}$ be a parabolic or hyperbolic-parabolic solution to \eqref{eq m-body problem} of the form \eqref{eq parabolic solution} or \eqref{eq hyperbolic-parabolic solution}, and $B_j^{(N)}, M_j^{(N)}$ be as in Proposition \ref{prop construction of approximate bubbles}. Then for $N \ge 3$, $\exists T_0= T_0(N)>0$ and $P^{(N)} \in C^1 \big( [T_0,+\infty), \Omega \big)$ satisfying \eqref{eq trajectory} and for any $t \ge T_0$,
\begin{equation} \label{eq estimate of parabolic trajectory}
    |\alpha_j^{(N)}(t)- \alpha_j^\infty(t)| \le t^{-\frac{1}{4}}, \quad |\beta_j^{(N)}(t)- \beta_j^\infty(t)| + |\lambda_j^{(N)}(t)- \lambda_j^\infty| \le t^{-\frac{1}{2}}.
\end{equation}
\end{prop}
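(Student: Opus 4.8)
The plan is to mimic the fixed–point argument of Proposition \ref{prop hyperbolic trajectory}, but with a norm adapted to the slower (parabolic) expansion rate and with a more careful bookkeeping of the main term of the nonlinear force. Recall from \eqref{eq formula of b_j^2} that $b_j^{(2)}(P)=-\sum_{k\neq j}\frac{\Vert Q\Vert_{L^2}^2\,\alpha_{jk}}{4\pi\lambda_k|\alpha_{jk}|^3}$, which is precisely the right-hand side of the $m$-body equation \eqref{eq m-body problem} when $\lambda=\lambda^\infty$; the higher terms $b_j^{(n)},m_j^{(n)}\in S_n$ for $n\ge 3$ are genuine error terms, and $m_j^{(n)}=b_j^{(1)}=0$ for $n\le 1$. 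Along a parabolic or hyperbolic-parabolic solution we have $a(t)\sim t^{2/3}$ (in the hyperbolic-parabolic case $a(t)\sim t^{2/3}$ as well, since the minimal inter-body distance is governed by the parabolic clusters), so a term in $S_n$ evaluated along the trajectory decays like $t^{-2n/3}$, and the cross terms in the Duhamel integrals below converge precisely because $N\ge 3$ gives integrable tails $t^{-2N/3}$ with $2N/3>2$.

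First I would fix small exponents and define, for $P\in C([T_0,\infty),\Omega)$,
\[
    \Vert P\Vert_\ast:=\sum_{j=1}^m\sup_{t\ge T_0}\Big(t^{1/3+\epsilon}|\alpha_j(t)|+t^{1/3+\epsilon}|\beta_j(t)|+t^{1/2}|\lambda_j(t)|\Big),
\]
for a suitably small $\epsilon>0$, working in the ball $X=\{P:\Vert P-P^\infty\Vert_\ast\le 1\}$. The contraction map $\Gamma$ is built from \eqref{eq trajectory} by integrating backwards from $+\infty$: set $\Gamma\lambda_j(t)=\lambda_j^\infty-\int_t^\infty M_j^{(N)}(P)\,d\tau$ (this converges since $M_j^{(N)}\in S_2+\cdots$ decays like $t^{-4/3}$), and for the positions use the already-known asymptotics of $P^\infty$: $\Gamma\beta_j(t)=\beta_j^\infty(t)+\int_t^\infty\big(\dot\beta_j^\infty(\tau)-B_j^{(N)}(P(\tau))\big)\,d\tau$, $\Gamma\alpha_j(t)=\alpha_j^\infty(t)+\int_t^\infty 2\big(\beta_j^\infty(\tau)-\beta_j(\tau)\big)\,d\tau$. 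Here the crucial cancellation, flagged in Remark (1) of the paper, is that $\dot\beta_j^\infty=b_j^{(2)}(\alpha^\infty,\cdot,\lambda^\infty)$ exactly matches the leading piece of $B_j^{(N)}$, so $\dot\beta_j^\infty(\tau)-B_j^{(N)}(P(\tau))$ splits into (i) a Lipschitz difference $b_j^{(2)}(P)-b_j^{(2)}(P^\infty)$, controlled by $\Vert P-P^\infty\Vert_\ast$ times $a^{-3}\sim t^{-2}$ (gradient of an $S_2$ function), hence integrable against the $\Vert\cdot\Vert_\ast$ weights, and (ii) genuine higher-order terms $\sum_{n\ge 3}b_j^{(n)}(P)$, each $O(t^{-2n/3})$ with $2n/3\ge 2>1+(1/3+\epsilon)$ after integration. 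One checks $\Gamma:X\to X$ and the $\tfrac12$-contraction estimate exactly as before, each bound reducing to an elementary integral $\int_t^\infty\tau^{-p}\,d\tau$ with $p>1$.

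The main obstacle, and the place requiring the promised delicate computation, is verifying that $\dot\beta_j^\infty(\tau)=b_j^{(2)}(\alpha^\infty(\tau),\beta^\infty(\tau),\lambda^\infty)$ holds with an error that is itself integrable after multiplication by the $\Vert\cdot\Vert_\ast$-weight — i.e. that the mismatch between "$P^\infty$ solves the $m$-body ODE \eqref{eq m-body problem}" and "$b_j^{(2)}$ is the Newtonian force" is only of the allowed size. In the parabolic case the hypothesis $\lambda_j^\infty\equiv\lambda^\infty$ makes $b_j^{(2)}$ literally equal to the $m$-body force, so $\dot\beta_j^\infty-b_j^{(2)}(P^\infty)=0$ and one only needs the expansion \eqref{eq parabolic solution} together with its derivative estimates $|\beta_j^\infty-\beta_J^\infty|\lesssim t^{-1/3}$, $|\dot\beta_j^\infty|\lesssim t^{-4/3}$ to run the integrals; in the hyperbolic-parabolic case one applies the same reasoning cluster by cluster using the assumption $\lambda_j^\infty\equiv\lambda_J^\infty$ on each class $J\in M$, and the inter-cluster interactions, decaying like $|t|^{-2}$, are harmless. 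Once the fixed point $P^{(N)}$ is obtained, \eqref{eq trajectory} holds by construction and the bound \eqref{eq estimate of parabolic trajectory} is read off from $\Vert P^{(N)}-P^\infty\Vert_\ast\le 1$ (the $t^{-1/4}$ and $t^{-1/2}$ being generous relaxations of the $t^{-1/3-\epsilon}$ and $t^{-1/2}$ in the norm); the value of $T_0$ is chosen large enough, depending on $N$, that all the $C_N\,T_0^{-\delta}$ factors produced by the higher-order $S_n$ terms are $\le\tfrac12$.
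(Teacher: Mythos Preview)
Your contraction argument does not close. The immediate symptom is that with the weight $t^{1/3+\epsilon}$ on $\beta_j$, the integral $\Gamma\alpha_j(t)-\alpha_j^\infty(t)=\int_t^\infty 2(\beta_j^\infty-\beta_j)\,d\tau$ diverges, since $|\beta_j-\beta_j^\infty|\le t^{-1/3-\epsilon}$ is not integrable for small $\epsilon$. The problem is structural and cannot be repaired by adjusting the weights: along the parabolic trajectory $a(t)\sim t^{2/3}$, the term $b_j^{(3)}(P)$ is genuinely of size $t^{-2}$ (see \eqref{eq formula of b_j^3}), so after one integration $|\Gamma\beta_j-\beta_j^\infty|$ can be no smaller than $ct^{-1}$, and that in turn is not integrable, forcing $\alpha-\alpha^\infty$ to diverge like $\log t$. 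Your inequality ``$2n/3\ge 2>1+(1/3+\epsilon)$'' only checks that the $\beta$-bound beats the $\alpha$-weight; it does not check that the $\beta$-bound is itself integrable, which is what $\Gamma\alpha$ requires. (Also, $\dot\beta_j^\infty=b_j^{(2)}(P^\infty)$ holds exactly from \eqref{eq m-body problem} and \eqref{eq formula of b_j^2} with no mass hypothesis, and indeed the proposition does not use the equal-mass assumption at all --- see the Remark following its statement --- so that paragraph of your argument is based on a misreading.)

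The paper supplies two missing ingredients. First, it centers not at $P^\infty$ but at $\tilde P=(\alpha^\infty,\beta^\infty,\tilde\lambda)$ with $\tilde\lambda_j(t)=\lambda_j^\infty-\sum_{k\neq j}\frac{(\lambda_j^\infty)^3\Vert Q\Vert_{L^2}^2}{8\pi\lambda_k^\infty|\alpha_{jk}^\infty(t)|}$, which absorbs the main part of $\int_t^\infty m_j^{(2)}$; the explicit computation from \eqref{eq formula of m_j^2} and \eqref{eq formula of b_j^3} shows that the $O(t^{-2})$ contribution of $b_j^{(3)}$ to $\dot\beta_j$ exactly cancels the $O(t^{-2})$ contribution coming from the $\lambda$-variation of $b_j^{(2)}$ at $\tilde\lambda$, leaving a residual force of size $O(t^{-7/3+\epsilon})$. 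Second, even after this cancellation the linear part of the reduced system is the borderline Euler equation $\ddot x=\tfrac{A}{t^2}x+O(t^{-7/3+\epsilon})$ (with $x=\alpha-\tilde\alpha$), for which no weighted-sup-norm contraction can work: one needs $b>1+a$ for the $\alpha$-integral and $b<1+a$ for the $\beta$-integral, where $t^a,t^b$ are the respective weights. The paper therefore proves a separate ODE lemma that inverts $\partial_t^2-A/t^2$ via explicit integral kernels $G_{a,b}$ adapted to the eigenvalues of $A$, selecting the decaying branch.
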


\begin{rmk}
This proposition is stronger than the one in \cite{KMR2bodyHartree} because: (1) we do not need to assume $\lambda_j^\infty$ are identical in the parabolic case, or $\lambda_j^\infty$ are identical for $j \in J$ in the hyperbolic-parabolic case; (2) we know $\alpha_j^{(N)}(t)- \alpha_j^\infty(t) \to 0$ as $t \to +\infty$.
\end{rmk}

While before we have only used the formula of $b_j^{(2)}$, here we also need the explicit expression of $m_j^{(2)}$ and $b_j^{(3)}$. Since $T_j^{(1)}$ is real-valued, by \eqref{eq definition of E_j tilde}, we have
\begin{equation}
    \mathrm{Im} \ \hat{E}_j^{(1)} = \lambda_j^2 \sum_{k=1}^m \frac{\partial T_j^{(1)}}{\partial \alpha_k} \cdot 2\beta_k.
\end{equation}
By the formula of $T_j^{(1)}$ \eqref{eq formula of T_j^1} and the requirement of $m_j^{(2)}$, we deduce
\begin{equation} \label{eq formula of m_j^2}
    m_j^{(2)}(P)= \sum_{k \neq j} \frac{\lambda_j^3 \Vert Q \Vert_{L^2}^2}{4\pi \lambda_k} \frac{\alpha_{jk} \cdot \beta_{jk}}{|\alpha_{jk}|^3}.
\end{equation}
Combining this and calculation in the proof of Proposition \ref{prop construction of approximate bubbles}, we have
\begin{equation} 
    \left\{ \begin{aligned}
        &L_+ \mathrm{Re}\ T_j^{(2)}= -2\phi_{Q T_j^{(1)}} T_j^{(1)}- \phi_{ \left| T_j^{(1)} \right|^2} Q- \sum_{k \neq j} \left( 2\psi_{Q T_k^{(1)}}^{(1)} Q+ \psi_{Q^2,k}^{(1)} T_j^{(1)} \right), \\
        &L_- \mathrm{Im}\ T_j^{(2)}= 0.
    \end{aligned} \right.
\end{equation}
Thus we may take $T_j^{(2)}$ as a real-valued radial function. Then we compute
\begin{equation} \begin{aligned}
    \mathrm{Re} \ \hat{E}_j^{(2)}= &\ -\lambda_j^3 b_j^{(2)} \cdot y_j T_j^{(1)}- 2\phi_{T_j^{(1)} T_j^{(2)}} Q- 2\phi_{Q T_j^{(2)}} T_j^{(1)}- 2\phi_{Q T_j^{(1)}} T_j^{(2)} \\
    &\ -\phi_{\left| T_j^{(1)} \right|^2} T_j^{(1)}- \sum_{k \neq j} \bigg( \psi_{Q^2,k}^{(2)} T_j^{(1)}+ 2\psi_{Q T_k^{(1)}}^{(2)} Q+ 2\psi_{Q T_k^{(2)}}^{(1)} Q \\
    &\qquad \qquad \qquad \qquad \qquad + \psi_{\left| T_j^{(1)} \right|^2}^{(1)} Q+ 2\psi_{Q T_k^{(1)}}^{(1)} T_j^{(1)}+ \psi_{Q^2,k}^{(1)} T_j^{(2)} \bigg).
\end{aligned} \end{equation}

Recall that we require $b_j^{(3)}$ to satisfy
\begin{equation}
    \Big( \lambda_j^3 b_j^{(3)} \cdot y_j Q+ \sum_{k \neq j} \psi_{Q^2,k}^{(3)} Q- \mathrm{Re} \ \hat{E}_j^{(2)}, \nabla Q \Big)=0.
\end{equation}
Since $Q, T_j^{(1)}, T_j^{(2)}$ are all even, the terms with $\phi, \psi^{(1)}, \psi^{(3)}$ are orthogonal to $\nabla Q$. Removing those terms and using \eqref{eq formula of b_j^2}, we obtain
\begin{equation}
    \Big( \lambda_j^3 b_j^{(3)} \cdot y_j Q+ 2\sum_{k \neq j} \psi_{Q T_k^{(1)}}^{(2)} Q, \nabla Q \Big)=0.
\end{equation}
Then by \eqref{eq formula of T_j^1} and $(\Lambda Q,Q)= \frac{1}{2} \Vert Q \Vert_{L^2}^2$, a result of integration by parts, we get
\begin{equation} \label{eq formula of b_j^3}
    b_j^{(3)}(P)= - \sum_{k \neq j} \frac{(Q,T_k^{(1)}) \alpha_{jk}}{2\pi \lambda_k |\alpha_{jk}|^3}= \frac{\Vert Q \Vert_{L^2}^4}{32\pi} \sum_{k \neq j} \bigg( \sum_{l \neq k} \frac{1}{\lambda_l |\alpha_{kl}|} \bigg) \frac{\lambda_k \alpha_{jk}}{ |\alpha_{jk}|^3}.
\end{equation}

Then we give the idea of the proof to make it easier to understand.

In the hyperbolic case, the equation can be roughly written as
\begin{equation}
    \left\{ \begin{aligned}
        \dot{\alpha}(t)- \dot{\alpha}^\infty(t) &= 2\beta(t)- 2\beta^\infty(t), \\
        \dot{\beta}(t)- \dot{\beta}^\infty(t) &= O \big( t^{-3} \big), \\
        \dot{\lambda}(t)- \dot{\lambda}_j^\infty(t) &= O \big( t^{-2} \big),
    \end{aligned} \right.  
\end{equation}
and then we can apply the fixed point theorem. But in the parabolic and hyperbolic-parabolic cases, we only have
\begin{equation}
    \left\{ \begin{aligned}
        \dot{\alpha}(t)- \dot{\alpha}^\infty(t) &= 2\beta(t)- 2\beta^\infty(t), \\
        \dot{\beta}(t)- \dot{\beta}^\infty(t) &= O \big( t^{-2} \big), \\
        \dot{\lambda}(t)- \dot{\lambda}_j^\infty(t) &= O \big( t^{-\frac{4}{3}} \big).
    \end{aligned} \right.  
\end{equation}
The error term is so large that the fixed point theorem becomes invalid.

The recipe is to replace $P^\infty$ by some $\tilde{P}$ which is closed to $P^\infty$ and makes the error terms on the right hand side smaller. More precisely, $P^\infty$ eliminates $B_j^{(N)}$ up to the second term and $M_j^{(N)}$ up to the first term. We want $\tilde{P}$ to eliminate $B_j^{(N)}$ up to the third term and $M_j^{(N)}$ up to the second term. Our $\tilde{P}$ serves as $P^{(app)}$ in \cite{KMR2bodyHartree}, but we would like to point out that the existence of such $\tilde{P}$ is not taken for granted when $m \ge 3$. We made a new observation that several terms will cancel. Thanks to this observation, we have an explicit expression of $\tilde{P}$ and more importantly, we know $\tilde{\alpha}= \alpha^\infty$ and $\tilde{\beta}= \beta^\infty$. This is exactly the reason why we can make an improvement.

The approximate equation we will get is \eqref{eq trajectory, modified twice}, which is still more complicated than that in the hyperbolic case. We need a more involved application of the fixed point theorem to obtain the conclusion of the proposition.

\begin{proof} [Proof of Proposition \ref{prop parabolic and hyperbolic-parabolic trajectory}] \

Take $\tilde{\alpha}= \alpha^\infty$, $\tilde{\beta}= \beta^\infty$ and for each $j$, 
\begin{equation} \label{eq formula of tilde lambda}
    \tilde{\lambda}_j(t)= \lambda_j^\infty- \int_t^\infty \sum_{k \neq j} \frac{(\lambda_j^\infty)^3 \Vert Q \Vert_{L^2}^2}{4\pi \lambda_k^\infty} \frac{\alpha_{jk}^\infty (\tau) \cdot \beta_{jk}^\infty (\tau)}{|\alpha_{jk}^\infty (\tau)|^3} \d \tau.
\end{equation}
Using $\dot{\alpha}_j^\infty= 2\beta_j^\infty$, we may simplify the expression as
\begin{equation} \label{eq formula of tilde lambda 2}
    \tilde{\lambda}_j(t)= \lambda_j^\infty- \sum_{k \neq j} \frac{(\lambda_j^\infty)^3 \Vert Q \Vert_{L^2}^2} {8\pi \lambda_k^\infty |\alpha_{jk}^\infty(t)|}.
\end{equation}
In other words, we take $\tilde{P}(t)= \big( \alpha^\infty(t), \beta^\infty(t), \tilde{\lambda}(t) \big)$.

Let $\epsilon= \frac{1}{100}$ and $T_0>0$. Define the norm $\Vert \cdot \Vert_2$ of $P \in C \big( [T_0,+\infty), \Omega \big)$ by
\begin{equation}
    \Vert P \Vert_2:= \sum_{j=1}^m \sup_{t \ge T_0} \Big( t^{\frac{1}{3}- 3\epsilon} |\alpha_j(t)|+ t^{\frac{4}{3}- 2\epsilon} |\beta_j(t)|+t^{1-\epsilon} |\lambda_j(t)| \Big),
\end{equation}
and let $Y= \Big\{ P \in C \big( [T_0,+\infty), \Omega \big) \ \big| \ \Vert P-\tilde{P} \Vert_2 \le 1 \Big\}$. Then it suffices to find a solution of \eqref{eq trajectory} in $Y$. We will assume $P \in Y$ hereinafter.

Recalling the expression \eqref{eq formula of b_j^2}, \eqref{eq formula of m_j^2} and \eqref{eq formula of b_j^3}, if we set
\begin{equation} \begin{aligned}
    \tilde{b}_j^{(2)}(P) &= b_j^{(2)}(P)+ \sum_{k \neq j} \frac{\Vert Q \Vert_{L^2}^2 \alpha_{jk}}{4\pi \lambda_k^\infty |\alpha_{jk}|^3}- \sum_{k \neq j} \frac{\Vert Q \Vert_{L^2}^2 (\lambda_k- \lambda_k^\infty) \alpha_{jk}}{4\pi (\lambda_k^\infty)^2 |\alpha_{jk}|^3}, \\
    \tilde{b}_j^{(3)}(P)&= b_j^{(3)}(P)- \frac{\Vert Q \Vert_{L^2}^4}{32\pi} \sum_{k \neq j} \bigg( \sum_{l \neq k} \frac{1}{\lambda_l^\infty |\alpha_{kl}|} \bigg) \frac{\lambda_k^\infty \alpha_{jk}}{ |\alpha_{jk}|^3}, \\
    \tilde{m}_j^{(2)}(P) &= m_j^{(2)}(P)- \sum_{k \neq j} \frac{(\lambda_j^\infty)^3 \Vert Q \Vert_{L^2}^2}{4\pi \lambda_k^\infty} \frac{\alpha_{jk} \cdot \beta_{jk}}{|\alpha_{jk}|^3},
\end{aligned} \end{equation}
then using $\tilde{a}(t) \sim t^{\frac{2}{3}}$ and $\Vert \tilde{P}- P^\infty \Vert \le Ct^{-\frac{2}{3}}$, we get
\begin{equation} \label{eq estimate of tilde b,m}
    \Big| \tilde{b}_j^{(2)}(\tilde{P}(t)) \Big|+ \Big| \tilde{b}_j^{(3)}(\tilde{P}(t)) \Big| \le Ct^{-\frac{8}{3}}, \quad \Big| \tilde{m}_j^{(2)}(\tilde{P}(t)) \Big| \le Ct^{-2}.
\end{equation}
Moreover, by \eqref{eq formula of tilde lambda} and \eqref{eq formula of tilde lambda 2}, direct computation yields
\begin{equation}
    \dot{\tilde{\beta}}_j= b_j^{(2)}(\tilde{P})- \tilde{b}_j^{(2)}(\tilde{P})+ b_j^{(3)}(\tilde{P})- \tilde{b}_j^{(3)}(\tilde{P}), \quad \dot{\tilde{\lambda}}_j= m_j^{(2)}(\tilde{P})- \tilde{m}_j^{(2)}(\tilde{P}).
\end{equation}
This is the cancellation of errors we have mentioned. Both \eqref{eq formula of tilde lambda} and \eqref{eq formula of tilde lambda 2} are expressions of $\tilde{\lambda}_j$, and they lead to the above formulas of $\dot{\tilde{\lambda}}_j$ and $\dot{\tilde{\beta}}_j$, respectively.

Write $P^{(N)}=P$ for simplicity. Then we can rewrite \eqref{eq trajectory} as
\begin{equation}
    \left\{ \begin{aligned}
        \dot{\alpha}_j(t)- \dot{\tilde{\alpha}}_j(t) &= 2\beta_j(t)- 2\tilde{\beta}_j(t), \\
        \dot{\beta}_j(t)- \dot{\tilde{\beta}}_j(t) &= \left[ b_j^{(2)}(P(t))- b_j^{(2)}(\tilde{P}(t)) \right]+ \left[ b_j^{(3)}(P(t))- b_j^{(3)}(\tilde{P}(t)) \right] \\
        &\quad + \tilde{b}_j^{(2)}(\tilde{P}(t))+ \tilde{b}_j^{(3)}(\tilde{P}(t))+ \sum_{n=4}^N b_j^{(n)}(P(t)), \\
        \dot{\lambda}_j(t)- \dot{\tilde{\lambda}}_j(t) &= \left[ m_j^{(2)}(P(t))- m_j^{(2)}(\tilde{P}(t)) \right]+ \tilde{m}_j^{(2)}(\tilde{P}(t))+ \sum_{n=3}^N m_j^{(n)}(P(t)).
    \end{aligned} \right.
\end{equation}
By \eqref{eq estimate of tilde b,m}, estimates of $b_j^{(n)}$, $m_j^{(n)}$ and $a(t) \sim t^{\frac{2}{3}}$, we have
\begin{equation} \label{eq trajectory, modified}
    \left\{ \begin{aligned}
        \dot{\alpha}_j(t)- \dot{\tilde{\alpha}}_j(t) &= 2\beta_j(t)- 2\tilde{\beta}_j(t), \\
        \dot{\beta}_j(t)- \dot{\tilde{\beta}}_j(t) &= b_j^{(2)}(P(t))- b_j^{(2)}(\tilde{P}(t))+ O \big( t^{-\frac{7}{3}} \big), \\
        \dot{\lambda}_j(t)- \dot{\tilde{\lambda}}_j(t) &= O \big( t^{-2} \big).
    \end{aligned} \right.
\end{equation}
In this proof, $O(t^{-\kappa})$ represents a continuous function of $\tilde{P}$ and $P$, whose $C^1$ norm in $P$ is bounded by $Ct^{-\kappa}$ when evaluated at $(\tilde{P}(t), P(t))$. 

We still need to estimate $b_j^{(2)}(P(t))- b_j^{(2)}(\tilde{P}(t))$. We have
\begin{equation}
    b_j^{(2)}(P)- b_j^{(2)}(\tilde{P})= \frac{\Vert Q \Vert_{L^2}^2}{4\pi} \sum_{k \neq j} \bigg[ \frac{1}{\tilde{\lambda}_k} \Big( \frac{\tilde{\alpha}_{jk}}{|\tilde{\alpha}_{jk}|^3}- \frac{\alpha_{jk}}{|\alpha_{jk}|^3} \Big)+ \frac{(\lambda_k- \tilde{\lambda}_k) \alpha_{jk}} {\lambda_k \tilde{\lambda}_k |\alpha_{jk}|^3} \bigg]. 
\end{equation}
By the Taylor formula,
\begin{equation}
    \frac{\alpha_{jk}}{|\alpha_{jk}|^3}- \frac{\tilde{\alpha}_{jk}}{|\tilde{\alpha}_{jk}|^3} = \frac{\alpha_{jk}- \tilde{\alpha}_{jk}} {|\tilde{\alpha}_{jk}|^3}- \frac{3 \tilde{\alpha}_{jk} \cdot (\alpha_{jk}- \tilde{\alpha}_{jk})} {|\tilde{\alpha}_{jk}|^5} \tilde{\alpha}_{jk}+ O \big( t^{-\frac{8}{3}+ 6\epsilon} \big).
\end{equation}
Using \eqref{eq parabolic solution} and \eqref{eq hyperbolic-parabolic solution}, there exists a matrix $A_{jk} \in \mathbb{R}^{3 \times 3}$ such that
\begin{equation}
    \frac{\alpha_{jk}}{|\alpha_{jk}|^3}- \frac{\tilde{\alpha}_{jk}}{|\tilde{\alpha}_{jk}|^3} = \frac{A_{jk}}{t^2} (\alpha_{jk}- \tilde{\alpha}_{jk})+ O \big( t^{-\frac{7}{3}+ \frac{\epsilon}{2}} \big),
\end{equation}
so there exists $A_j \in \mathbb{R}^{3 \times 3m}$ such that
\begin{equation}
    b_j^{(2)}(P)- b_j^{(2)}(\tilde{P})= \frac{A_j (\alpha- \tilde{\alpha})}{t^2}+ O \big( t^{-\frac{7}{3}+ \epsilon} \big),
\end{equation}
where $\alpha$ is understood as a column vector. Set $A= (A_1^T, \cdots, A_m^T)^T$, where the superscript $T$ represents transposition. Then we can further rewrite \eqref{eq trajectory, modified} as 
\begin{equation} \label{eq trajectory, modified twice}
    \left\{ \begin{aligned}
        \dot{\alpha}(t)- \dot{\tilde{\alpha}}(t) &= 2\beta(t)- 2\tilde{\beta}(t), \\
        \dot{\beta}(t)- \dot{\tilde{\beta}}(t) &= \frac{A (\alpha- \tilde{\alpha})}{t^2}+ O \big( t^{-\frac{7}{3}+ \epsilon} \big), \\
        \dot{\lambda}(t)- \dot{\tilde{\lambda}}_j(t) &= O \big( t^{-2} \big),
    \end{aligned} \right.    
\end{equation}

In other words, we have reduced the problem to the following lemma.

\begin{lem}
Let $0 < \delta< \kappa<1$, $n,m \in \mathbb{N}$, $A \in \mathbb{R}^{n \times n}$, $F \in C^1 \big( \mathbb{R}_+ \times \mathbb{R}^{n+n+m}; \ \mathbb{R}^n \big)$ and $H \in C^1 \big( \mathbb{R}_+ \times \mathbb{R}^{n+n+m}; \ \mathbb{R}^m \big)$. Assume 
\begin{equation} \label{eq uniform C^1 estimate of F}
    \sup_{|\omega| \le 1} \Big( |F(t,\omega)|+ |\nabla_\omega F(t,\omega)| \Big) \le t^{-2-\kappa}, \quad \forall t>0
\end{equation}
and
\begin{equation} \label{eq uniform C^0 estimate of H}
    \sup_{|\omega| \le 1} \Big( |H(t,\omega)|+ |\nabla_\omega H(t,\omega)| \Big) \le t^{-1-\kappa}, \quad \forall t>0.
\end{equation}
Then there exists $T>0$, $x,y \in C^1 \big( [T,+\infty), \mathbb{R}^n \big)$ and $z \in C^1 \big( [T,+\infty), \mathbb{R}^m \big)$ such that
\begin{equation}
    \left\{ \begin{aligned}
        \dot{x}(t) &= y(t) \\
        \dot{y}(t) &= \frac{A x(t)}{t^2}+ F(t, x(t), y(t),z(t)) \\
        \dot{z}(t) &= H(t,x(t),y(t),z(t))
    \end{aligned} \right.
    \quad \text{and} \quad 
    \left\{ \begin{aligned}
        |x(t)| &\le t^{-\delta}, \\
        |y(t)| &\le t^{-1-\delta}, \\
        |z(t)| &\le t^{-\delta}, 
    \end{aligned} \right. \quad \forall t \ge T. 
\end{equation}
\end{lem}

\begin{proof}
We may work instead on $\mathbb{C}$ by setting $F(t,\omega)= F(t, \re(\omega))$, $G(t,\omega)= G(t, \re(\omega))$ for complex $\omega$, and allowing $x(t), y(t), z(t)$ to take complex values. If the complex counterpart is proved, then the lemma follows by taking the real part.

For $T>0$ and $\omega=(x,y,z) \in C \big( [T,+\infty), \mathbb{C}^{n+n+m} \big)$, define the norm 
\begin{equation}
    {\Vert \omega \Vert}_3 := \sup_{t \ge T} \Big( t^{\delta} |x(t)|+ t^{1+\delta} |y(t)|+ t^\delta |z(t)| \Big)
\end{equation}
and let $B= \left\{\omega \in C \big( [T,+\infty), \mathbb{C}^{n+n+m} \big) \ \big| \ \Vert \omega \Vert_3 \le 1 \right\}$. We want to find a solution in $B$.

First we consider the case when $A$ is diagonalizable over $\mathbb{C}$ and $-\frac{1}{4}$ is not an eigenvalue of $A$. We may take $n$ linear independent eigenvectors $v_1, \cdots v_n \in \mathbb{C}^n$ of $A$, with eigenvalues $c_1, \cdots, c_n$, respectively. Let $a_j, b_j$ be the two roots of $\lambda^2-\lambda= c_j$. We have $a_j \neq b_j$ since $c_j \neq -\frac{1}{4}$. Write $F(t,x)= \sum_{j=1}^n f_j(t,x) v_j$. Then $f_j$ also satisfies \eqref{eq uniform C^1 estimate of F}. For $\omega \in B$, we define $\Gamma \omega$ by
\begin{equation} \begin{aligned}
    (\Gamma x)(t) &= \sum_{j=1}^n G_{a_j,b_j} f_j(t,\omega) v_j, \\
    (\Gamma y)(t) &= \frac{\d}{\d t} \sum_{j=1}^n G_{a_j,b_j} f_j(t,\omega) v_j, \\
    (\Gamma z)(t) &= -\int_t^\infty H(\tau, \omega(\tau)) \d \tau, \\
\end{aligned} \end{equation}
where 
\begin{equation}
    G_{a,b} f(t,\omega)= \frac{G_a f(t,\omega)- G_b f(t,\omega)}{a-b}
\end{equation}
and
\begin{equation}
    G_a f(t,\omega)= \left\{ \begin{aligned}
        &\ t^a \int_1^t \tau^{1-a} f(\tau, \omega(\tau)) \d \tau, && \mathrm{Re}(a) \le -\kappa, \\
        &-t^a \int_t^\infty \tau^{1-a} f(\tau, \omega(\tau)) \d \tau, && \mathrm{Re}(a) > -\kappa,
    \end{aligned} \right. 
\end{equation}
for any $a \neq b \in \mathbb{C}$, function $f(\cdot,\cdot)$ satisfying \eqref{eq uniform C^1 estimate of F} and $\omega \in B$. 

Note that, if $f$ satisfies \eqref{eq uniform C^1 estimate of F}, then $G_a f(t,\omega)$ is well-defined and satisfies
\begin{equation}
    |G_a f(t,\omega)| \le Ct^{-\kappa} \log t, \quad |G_a f(t,\omega)- G_a f(t,\omega')| \le Ct^{-\kappa} {\Vert \omega- \omega' \Vert}_3, \quad \forall \omega, \omega' \in B.
\end{equation}
This gives estimates for $\Gamma$ on the $x$ component. Similar estimates hold for the $y$ component. Moreover, \eqref{eq uniform C^0 estimate of H} gives estimates on the $z$ component. These estimates write together as
\begin{equation}
    {\Vert \Gamma \omega \Vert}_3 \le C T^{-\kappa+ \delta} \log T \quad \text{and} \quad {\Vert \Gamma \omega- \Gamma \omega' \Vert}_3 \le CT^{-\kappa+ \delta} {\Vert \omega- \omega' \Vert}_3, \quad \forall \omega, \omega' \in B.
\end{equation}
Therefore, if $T$ is large enough, then $\Gamma$ maps $B$ to itself and is a contraction. 

By direct computation, we have
\begin{equation} \begin{aligned}
    \frac{\d}{\dt} (\Gamma x) &= \Gamma y, \\
    \frac{\d}{\dt} (\Gamma y) &= \frac{A}{t^2} (\Gamma x)+ F(t,x,y,z), \\
    \frac{\d}{\dt} (\Gamma z) &= H(t,x,y,z),
\end{aligned} \end{equation}
thus the unique fixed point of $\Gamma$ in $B$, guaranteed by the contraction mapping theorem, is the desired solution $\omega$.

For the general case, since the set of diagonalizable matrices is dense, for any $c_0>0$, there exists $\tilde{A} \in \mathbb{R}^{n \times n}$ such that $\tilde{A}$ is diagonalizable over $\mathbb{C}$, $\Vert A- \tilde{A} \Vert \le c_0$, and $-\frac{1}{4}$ is not an eigenvalue of $\tilde{A}$. Then we consider the ODE with $A$ replaced by $\tilde{A}$ and $F$ replaced by 
\begin{equation}
    \tilde{F}(t,\omega)= F(t,\omega)+ \frac{(A-\tilde{A}) x}{t^2}.
\end{equation}
Instead of \eqref{eq uniform C^1 estimate of F}, we have
\begin{equation}
    \sup_{\omega \in B} |\tilde{F}(t, \omega(t))| \le c_0 t^{-2-\delta}+ t^{-2-\kappa}, \ \sup_{\omega \in B} |\nabla_\omega \tilde{F}(t, \omega(t))| \le c_0 t^{-2}+ t^{-2-\kappa}, \quad \forall t>0.
\end{equation}
We repeat the construction of $\Gamma$ with $\tilde{A}$ and $\tilde{F}$ and we will get 
\begin{equation}
    {\Vert \Gamma \omega \Vert}_3 \le Cc_0 \quad \text{and} \quad {\Vert \Gamma \omega- \Gamma \omega' \Vert}_3 \le Cc_0 {\Vert \omega- \omega' \Vert}_3, \quad \forall \omega, \omega' \in B
\end{equation}
for some $C>0$. We can still conclude upon taking $c_0$ small enough.
\end{proof}

Back to the proposition, the lemma implies that there exists a solution of \eqref{eq trajectory, modified twice} in $Y$.
\end{proof}

\subsection{Review of the hyperbolic case}

Now, let us go over the proof of the hyperbolic case and see what has to be changed in the other two cases.

Everything in Section \ref{sec approximate} works here, because it does not depend on the dynamics. Proposition \ref{prop hyperbolic trajectory} is replaced by Proposition \ref{prop parabolic and hyperbolic-parabolic trajectory}. The rest of Section \ref{sec reduction} will work because we have only used $a^{(N)}(t) \to \infty$ as $t \to \infty$. Therefore, it suffices to prove Proposition \ref{prop bootstrap} in the other two settings.

In Section \ref{sec estimate}, the asymptotic properties \eqref{eq asymptotic behavior of hyperbolic trajectory} need to be changed. In the parabolic setting, by \eqref{eq parabolic solution} and \eqref{eq estimate of parabolic trajectory}, we have
\begin{equation} \label{eq asymptotic behavior of parabolic trajectory}
    a(t) \sim t^{\frac{2}{3}}, \quad |\alpha_j| \lesssim t^{\frac{2}{3}}, \quad |\beta_j| \lesssim t^{-\frac{1}{3}}, \quad |\dot{\beta}_j| \lesssim t^{-\frac{4}{3}}, \quad \lambda_j \sim 1, \quad |\dot{\lambda}_j| \lesssim t^{-\frac{4}{3}}.
\end{equation}

For hyperbolic-parabolic solutions, the relation on $\{1,2,\cdots,m\}$ given by $a_j=a_k$ is an equivalence relation. Let $M$ denote the set of equivalent classes. For $J \in M$, let $\alpha_J$ be any of $\{\alpha_j|j \in J\}$ and $\beta_J$ be any of $\{\beta_j|j \in J\}$. Then by \eqref{eq hyperbolic-parabolic solution} and \eqref{eq estimate of parabolic trajectory}, we have
\begin{equation} \label{eq asymptotic behavior of hyperbolic-parabolic trajectory} \begin{gathered}
    a(t) \sim t^{\frac{2}{3}}, \quad |\alpha_J| \lesssim t, \quad |\beta_J| \lesssim 1, \quad |\dot{\beta}_J| \lesssim t^{-\frac{4}{3}}, \quad \lambda_J \sim 1, \quad |\dot{\lambda}_J| \lesssim t^{-\frac{4}{3}}, \\
    |\alpha_j- \alpha_J| \lesssim t^{\frac{2}{3}}, \quad |\beta_j- \beta_J| \lesssim t^{-\frac{1}{3}}, \quad |\lambda_j- \lambda_J| \lesssim t^{-\frac{1}{3}}, \quad \forall j \in J. 
\end{gathered} \end{equation}

With these one can check that all the estimates in Section \ref{subsec parameter}, in particular \eqref{eq estimate of Mod with epsilon} and \eqref{eq estimate of Mod}, hold. This is mainly because we did not use the sharp bounds in \eqref{eq asymptotic behavior of hyperbolic trajectory}. 

However, we need some modification in Section \ref{subsec error}. Lemma \ref{lem cutoff} is valid for the parabolic case. For the hyperbolic-parabolic case, we prove the following:

\begin{lem} \label{lem cutoff, mixed case}
There exist $c,C>0$ and $\varphi_j \in C^{1,\infty} (\mathbb{R}_+ \times \mathbb{R}^3)$ for $1 \le j \le m$ such that \eqref{eq cutoff} holds and, moreover, for any $J \in M$,
\begin{equation} \label{eq cutoff, mixed case} \begin{gathered}
    |\partial_t \varphi_J|+ |\nabla \varphi_J| \le Ct^{-1}, \quad \text{where } \varphi_J= \sum_{j \in J} \varphi_j.
\end{gathered} \end{equation}
\end{lem}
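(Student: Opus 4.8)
The plan is to build the $\varphi_j$ by a two–scale (hierarchical) version of the construction in the proof of Lemma \ref{lem cutoff}. The point is that in the hyperbolic–parabolic regime there are two separation scales: by \eqref{eq asymptotic behavior of hyperbolic-parabolic trajectory}, bodies inside a common class $J\in M$ stay within distance $\lesssim t^{2/3}$ of a fixed representative center $\alpha_J(t)$, while two classes $J\ne K$ satisfy $|\alpha_J(t)-\alpha_K(t)|\gtrsim t$ (the leading coefficients $a_j$ in \eqref{eq hyperbolic-parabolic solution} differ across classes), and moreover $a(t)\sim t^{2/3}$. So I will first separate the clusters at scale $t$, then separate the bodies inside each cluster at scale $t^{2/3}$, and finally multiply the two families of cutoffs.

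Concretely: (i) Applying the construction of Lemma \ref{lem cutoff} to the cluster centers $\{\alpha_J(t)\}_{J\in M}$, which are pairwise of size $\sim t$ apart, one produces $\Phi_J\in C^{1,\infty}(\mathbb{R}_+\times\mathbb{R}^3)$ with $0\le\Phi_J\le1$, $\sum_{J\in M}\Phi_J\equiv1$, $\Phi_J=1$ on a $ct$–neighborhood of $\{\alpha_j(t):j\in J\}$ (legitimate since that set has diameter $\lesssim t^{2/3}\ll t$), $\Phi_J=0$ on a $ct$–neighborhood of every other cluster, and
\[
    |\partial_t\Phi_J|+|\nabla\Phi_J|+|\partial_t\sqrt{\Phi_J}|+|\nabla\sqrt{\Phi_J}|\le \tfrac{C}{t}.
\]
(ii) For each $J$ with $|J|\ge2$, the bodies $\{\alpha_j(t):j\in J\}$ are pairwise $\ge a(t)\sim t^{2/3}$ apart, so again the construction of Lemma \ref{lem cutoff} at scale $t^{2/3}$ gives $\chi^{(J)}_j\in C^{1,\infty}$, $j\in J$, with $0\le\chi^{(J)}_j\le1$, $\sum_{j\in J}\chi^{(J)}_j\equiv1$, $\chi^{(J)}_j=1$ near $\alpha_j$ and $=0$ near $\alpha_k$ for $k\in J$, $k\ne j$, and $|\partial_t\chi^{(J)}_j|+|\nabla\chi^{(J)}_j|+|\partial_t\sqrt{\chi^{(J)}_j}|+|\nabla\sqrt{\chi^{(J)}_j}|\le Ct^{-2/3}\le C/a$; when $|J|=1$ set $\chi^{(J)}_j\equiv1$. (iii) Define $\varphi_j:=\Phi_J\,\chi^{(J)}_j$ for $j\in J$.

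Then $\sum_{j}\varphi_j=\sum_J\Phi_J\sum_{j\in J}\chi^{(J)}_j=\sum_J\Phi_J\equiv1$, and $0\le\varphi_j\le1$. The localization requirements in \eqref{eq cutoff} follow from those of $\Phi_J$ and $\chi^{(J)}_j$ after shrinking $c$: on $\{|x-\alpha_j|\le ca\}$ one has $\chi^{(J)}_j=1$ and, since $ca\sim ct^{2/3}\ll t$, also $\Phi_J=1$; on $\{|x-\alpha_k|\le ca\}$ with $k\ne j$ one has $\chi^{(J)}_j=0$ if $k\in J$ and $\Phi_J=0$ if $k\notin J$. By the Leibniz rule, $|\partial_t\varphi_j|+|\nabla\varphi_j|\le C/t+C/a\le C/a$, and writing $\sqrt{\varphi_j}=\sqrt{\Phi_J}\,\sqrt{\chi^{(J)}_j}$ the same bound holds for $\sqrt{\varphi_j}$; thus \eqref{eq cutoff} holds. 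Finally, for $J\in M$,
\[
    \varphi_J=\sum_{j\in J}\varphi_j=\Phi_J\sum_{j\in J}\chi^{(J)}_j=\Phi_J,
\]
so $|\partial_t\varphi_J|+|\nabla\varphi_J|=|\partial_t\Phi_J|+|\nabla\Phi_J|\le C/t$, which is \eqref{eq cutoff, mixed case}.

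The only genuinely delicate point, inherited from Lemma \ref{lem cutoff}, is propagating the square–root gradient bounds through the "complement" addend (where one cluster, resp.\ one body within a cluster, is defined as $1$ minus the sum of the others): this is handled exactly as there, by choosing the profile of the form $\Phi=1-(1-\Phi_0)^2$ so that $|\nabla\Phi|\le C\sqrt{1-\Phi}$, and checking that these bounds are stable under the product $\Phi_J\chi^{(J)}_j$. The only other thing to verify is that the two scales are genuinely separated — $t^{2/3}\ll t$, distinct classes $\sim t$ apart, common class within $\lesssim t^{2/3}$ of its center — which is precisely the content of \eqref{eq asymptotic behavior of hyperbolic-parabolic trajectory} (together with \eqref{eq hyperbolic-parabolic solution} and \eqref{eq estimate of parabolic trajectory}) and holds for $t\ge T_0$ large; this is where the hyperbolic–parabolic structure, and the definition of $M$ through the relation $a_j=a_k$, enters.
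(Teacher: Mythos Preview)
Your proof is correct and follows essentially the same two–scale (cluster/within–cluster) construction as the paper: first build cutoffs $\Phi_J$ at scale $t$ separating the classes $J\in M$, then build cutoffs $\chi^{(J)}_j$ at scale $t^{2/3}$ separating bodies within a class, and set $\varphi_j=\Phi_J\chi^{(J)}_j$. You in fact supply more verification than the paper does (the Leibniz argument for the derivative and square–root bounds, and the observation $\varphi_J=\Phi_J$), so nothing is missing.
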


\begin{proof}
For $J \in M$, let $\alpha_J$ be any of $\alpha_j$, $j \in J$. Applying Lemma \ref{lem cutoff} to $\{\alpha_J|J \in M\}$, we can find $\varphi_J \in C^{1,\infty} (\mathbb{R}_+ \times \mathbb{R}^3)$ such that 
\begin{equation} \begin{gathered}
    0 \le \varphi_J(t,x) \le 1, \quad \sum_{J \in M} \varphi_J(t,x) \equiv 1, \\
    |\partial_t \varphi_J|+ |\nabla \varphi_J| \le Ct^{-1}, \quad |\partial_t \sqrt{\varphi_J}|+ |\nabla \sqrt{\varphi_J}| \le Ct^{-1}, \\
    \varphi_J(t,x)= \left\{ \begin{aligned}
        &1, \quad |x-\alpha_J(t)| \le ct, \\
        &0, \quad |x-\alpha_K(t)| \le ct,\ K \neq J.
    \end{aligned} \right.
\end{gathered} \end{equation}
Then applying Lemma \ref{lem cutoff} to $\{\alpha_j|j \in J\}$ for each $J \in M$, we find $\psi_j \in C^{1,\infty} (\mathbb{R}_+ \times \mathbb{R}^3)$ such that
\begin{equation} \begin{gathered}
    0 \le \psi_j(t,x) \le 1, \quad \sum_{j \in J} \psi_j(t,x) \equiv 1, \\ 
    |\partial_t \psi_j|+ |\nabla \psi_j| \le Ct^{-\frac{2}{3}}, \quad |\partial_t \sqrt{\psi_j}|+ |\nabla \sqrt{\psi_j}| \le Ct^{-\frac{2}{3}}, \\
    \psi_j(t,x)= \left\{ \begin{aligned}
        &1, \quad |x-\alpha_j(t)| \le ct^{\frac{2}{3}}, \\
        &0, \quad |x-\alpha_k(t)| \le ct^{\frac{2}{3}},\ k \neq j.
    \end{aligned} \right.
\end{gathered} \end{equation}
Finally, take $\varphi_j= \varphi_J \psi_j$, where $J$ contains $j$. Then all the conditions are satisfied.
\end{proof}

It still suffices to prove Proposition \ref{prop coercivity} and \ref{prop estimate on G(epsilon)}. 

We can prove Proposition \ref{prop coercivity} exactly as before. For Proposition \ref{prop estimate on G(epsilon)}, we need to check \eqref{eq estimate of G_1}, \eqref{eq estimate of G_2} and \eqref{eq estimate of G_3}. The proof of \eqref{eq estimate of G_1} need not to be changed. The difficulty of the other two estimates is that $|\partial_t \varphi_j|+ |\nabla \varphi_j|$ does not have an $O(t^{-1})$ decay. By checking the previous computation, we need to show
\begin{equation} \label{eq G_2 cutoff error}
    \sum_{j=1}^m \Big( \frac{1}{\lambda_j^2}+ |\beta_j|^2 \Big) \int \Big( \partial_t \varphi_j |\varepsilon|^2+ 2\nabla \varphi_j \mathrm{Im}(\nabla \varepsilon \overline{\varepsilon}) \Big)= O\left( \frac{\Vert \varepsilon \Vert_{H^1}^2}{t} \right)
\end{equation}
and
\begin{equation} \label{eq G_3 cutoff error} \begin{aligned}
    \sum_{j=1}^m \beta_j \int \bigg( \nabla \varphi_j \Big( 2|\nabla \varepsilon|^2+ 2\phi_{\mathrm{Re} (\varepsilon \overline{R})} \mathrm{Re} (& \varepsilon \overline{R}) + \phi_{|R|^2} |\varepsilon|^2 \Big) \\
    &+ \partial_t \varphi_j \mathrm{Im} (\nabla \varepsilon \overline{\varepsilon}) \bigg) =  O\left( \frac{\Vert \varepsilon \Vert_{H^1}^2}{t} \right).  
\end{aligned} \end{equation}
At this point, we may understand the parabolic case as a special case of the hyperbolic-parabolic case, so we shall focus on the hyperbolic-parabolic case.

Our argument is easier than that in \cite{KMR2bodyHartree}. In fact, it is not clear whether the argument there can be applied here. Using \eqref{eq cutoff} and \eqref{eq asymptotic behavior of hyperbolic-parabolic trajectory}, we have
\begin{equation}
    |\beta_j- \beta_J| \cdot \Big( |\partial_t \varphi_j|+ |\nabla \varphi_j| \Big) \le \frac{C}{t}.
\end{equation}
Combining this and \eqref{eq cutoff, mixed case}, we derive \eqref{eq G_3 cutoff error}. For \eqref{eq G_2 cutoff error}, similarly, if we replace $\lambda_j$ by $\lambda_J$ and $\beta_j$ by $\beta_J$, then the difference is at most $O(t^{-1})$. Finally the terms with $\lambda_J$ or $\beta_J$ are controlled using \eqref{eq cutoff, mixed case}. We remark that this is the only place we need the assumption on the masses.

We have thus completed the proof of the parabolic case and the hyperbolic-parabolic case. Therefore, Theorem \ref{thm existence} is proved.

\section*{Acknowledgments}

I would like to thank Professor Wilhelm Schlag for suggesting the problem and for useful discussions. I would also like to thank Professor Joachim Krieger for kind encouragement.

\bibliographystyle{amsplain}
\bibliography{ref}

\end{document}